%% This is file `elsarticle-template-1-num.tex',
%%
%% Copyright 2009 Elsevier Ltd
%%
%% This file is part of the 'Elsarticle Bundle'.
%% ---------------------------------------------
%%
%% It may be distributed under the conditions of the LaTeX Project Public
%% License, either version 1.2 of this license or (at your option) any
%% later version.  The latest version of this license is in
%%    http://www.latex-project.org/lppl.txt
%% and version 1.2 or later is part of all distributions of LaTeX
%% version 1999/12/01 or later.
%%
%% The list of all files belonging to the 'Elsarticle Bundle' is
%% given in the file `manifest.txt'.
%%
%% Template article for Elsevier's document class `elsarticle'
%% with numbered style bibliographic references
%%
%% $Id: elsarticle-template-1-num.tex 149 2009-10-08 05:01:15Z rishi $
%% $URL: http://lenova.river-valley.com/svn/elsbst/trunk/elsarticle-template-1-num.tex $
%%
\documentclass[preprint, 12pt]{elsarticle}

%% Use the option review to obtain double line spacing
%% \documentclass[preprint,review,12pt]{elsarticle}

%% Use the options 1p,twocolumn; 3p; 3p,twocolumn; 5p; or 5p,twocolumn
%% for a journal layout:
%% \documentclass[final,1p,times]{elsarticle}
%% \documentclass[final,1p,times,twocolumn]{elsarticle}
%% \documentclass[final,3p,times]{elsarticle}
%% \documentclass[final,3p,times,twocolumn]{elsarticle}
%% \documentclass[final,5p,times]{elsarticle}
%% \documentclass[final,5p,times,twocolumn]{elsarticle}

%% if you use PostScript figures in your article
%% use the graphics package for simple commands
%% \usepackage{graphics}
%% or use the graphicx package for more complicated commands
%% \usepackage{graphicx}
%% or use the epsfig package if you prefer to use the old commands
%% \usepackage{epsfig}

%% The amssymb package provides various useful mathematical symbols
\usepackage{amssymb}
\usepackage{amsmath}
\usepackage{enumitem}
%% The amsthm package provides extended theorem environments
 \usepackage{amsthm}
  \usepackage{tikz -inet}
\usetikzlibrary{positioning,shapes.misc, patterns,arrows,decorations,decorations.pathreplacing, snakes}
\usepackage{graphicx}
\input xy
\xyoption{all}
\newtheorem{conjecture}{Conjecture}

\newtheorem{theorem}{Theorem}
\newtheorem{corollary}{Corollary}
\newtheorem{assumption}{Assumption}
\newdefinition{definition}{Definition}
\newtheorem{hypothesis}{Hypothesis}
\newdefinition{remark}{Remark}
\newtheorem{lemma}{Lemma}
\newtheorem{proposition}{Proposition}
\newtheorem{claim}{Claim}
\newtheorem{fact}{Fact}
\newcommand{\K}{\mathcal{K}}
\newcommand{\T}{\mathcal{T}}
\newcommand{\Union}{\bigcup}
\DeclareMathOperator{\tp}{ga-tp}
\DeclareMathOperator{\id}{id}
\DeclareMathOperator{\Aut}{Aut}
\DeclareMathOperator{\cf}{cf}
\DeclareMathOperator{\LS}{LS}
\DeclareMathOperator{\C}{\mathfrak{C}}
\DeclareMathOperator{\gaS}{gaS}
\newcommand*{\numberingI}[1]{%
\footnotesize\protect\tikz[baseline=-3pt]%
\protect\node[shape=circle,draw,inner sep=1.2pt,line width=0.2mm](n1){#1};}

\newcommand{\univ}{\rotatebox[origin=c]{-90}{$\prec^u$}}
%% The lineno packages adds line numbers. Start line numbering with
%% \begin{linenumbers}, end it with \end{linenumbers}. Or switch it on
%% for the whole article with \linenumbers after \end{frontmatter}.
%% \usepackage{lineno}

%% natbib.sty is loaded by default. However, natbib options can be
%% provided with \biboptions{...} command. Following options are
%% valid:

%%   round  -  round parentheses are used (default)
%%   square -  square brackets are used   [option]
%%   curly  -  curly braces are used      {option}
%%   angle  -  angle brackets are used    <option>
%%   semicolon  -  multiple citations separated by semi-colon
%%   colon  - same as semicolon, an earlier confusion
%%   comma  -  separated by comma
%%   numbers-  selects numerical citations
%%   super  -  numerical citations as superscripts
%%   sort   -  sorts multiple citations according to order in ref. list
%%   sort&compress   -  like sort, but also compresses numerical citations
%%   compress - compresses without sorting
%%
%% \biboptions{comma,round}

% \biboptions{}

\journal{Beyond First Order Logic}

\begin{document}

\begin{frontmatter}

%% Title, authors and addresses

%% use the tnoteref command within \title for footnotes;
%% use the tnotetext command for the associated footnote;
%% use the fnref command within \author or \address for footnotes;
%% use the fntext command for the associated footnote;
%% use the corref command within \author for corresponding author footnotes;
%% use the cortext command for the associated footnote;
%% use the ead command for the email address,
%% and the form \ead[url] for the home page:
%%
%% \title{Title\tnoteref{label1}}
%% \tnotetext[label1]{}
%% \author{Name\corref{cor1}\fnref{label2}}
%% \ead{email address}
%% \ead[url]{home page}
%% \fntext[label2]{}
%% \cortext[cor1]{}
%% \address{Address\fnref{label3}}
%% \fntext[label3]{}

\title{A Characterization of Uniqueness of Limit Models in Categorical Abstract Elementary Classes}

%% use optional labels to link authors explicitly to addresses:
%% \author[label1,label2]{<author name>}
%% \address[label1]{<address>}
%% \address[label2]{<address>}

\author{Monica M. VanDieren\corref{cor1}}
\cortext[cor1]{Corresponding Author}
\address{Robert Morris University \\  6001 University Blvd \\ Moon Township PA 15108}
\ead{vandieren@rmu.edu}

\begin{abstract}
%% Text of abstract
In this paper we examine the task set forth by Shelah and Villaveces in \cite{ShVi} of proving the uniqueness of limit models 
of cardinality $\mu$ in  $\lambda$-categorical abstract elementary classes with no maximal models, where $\lambda$ is some cardinal larger than $\mu$.  In \cite{Va} and \cite{Va-errata} we identified several gaps  in the approach outlined in  \cite{ShVi}, and we added the assumption that the union of an increasing chain of limit models is a limit model.

Here we replace this assumption with the seemingly weaker statement that the union of an increasing and continuous chain of limit models is an amalgamation base.
Moreover, we prove that this assumption  is not only sufficient but is necessary to settle the uniqueness of limit models  problem attempted in \cite{ShVi} for $\lambda=\mu^{+n}$ when $0<n<\omega$.
%This is done by showing the statement that the union of an increasing chain of limit models is an amalgamation base is equivalent to the uniqueness of limit models  in $\lambda^+$-categorical classes.

\end{abstract}

%\begin{keyword}
%% keywords here, in the form: keyword \sep keyword

%% MSC codes here, in the form: \MSC code \sep code
%% or \MSC[2008] code \sep code (2000 is the default)

%\end{keyword}

\end{frontmatter}

%%
%% Start line numbering here if you want
%%
% \linenumbers

%% main text

\section{Introduction}
Since its introduction in the 1970s, 
 the classification of abstract elementary classes (AECs) has been guided by generalizations of \L o\'{s} Conjecture \cite{Lo}:\begin{conjecture}[Shelah's Eventual Categoricity Conjecture \cite{Sh09b}]
If an AEC $\K$ is categorical in some sufficiently large cardinal $\lambda$, then $\K$ is categorical in all sufficiently large $\mu$.
\end{conjecture}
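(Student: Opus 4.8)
The plan is to follow the standard Shelah-style program: develop a superstability calculus inside $\K$ from the categoricity assumption and then run categoricity transfers both upward and downward across the cardinal spectrum. I do not expect to settle the conjecture in full ZFC generality, so the roadmap below also flags where the genuine obstructions sit. \emph{Step 1 (structural consequences of categoricity).} Using Shelah's presentation theorem together with Ehrenfeucht--Mostowski constructions, categoricity in a sufficiently large $\lambda$ yields: no maximal models, stability in a large set of cardinals below $\lambda$, and --- under a weak form of GCH, or a large-cardinal hypothesis, or a separate amalgamation-transfer argument --- the amalgamation and joint embedding properties on a tail of cardinals below $\lambda$. This is precisely where the present paper feeds in: once amalgamation holds at and above some $\mu$, one needs unions of increasing continuous chains of limit models of size $\mu$ to be amalgamation bases, and by the main theorem of this paper that is exactly the hypothesis equivalent to uniqueness of limit models in $\mu$.

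\emph{Step 2 (a good frame).} Next I would leverage uniqueness of limit models to promote $\mu$-nonsplitting (computed from EM-models, or $\mu$-nonforking) to an independence notion with symmetry, local character, and extension --- i.e.\ a good $\mu$-frame. The decisive use of the paper's hypothesis occurs in the \emph{symmetry} property for nonsplitting: the analysis in \cite{Va,Va-errata} shows symmetry is tied to (a form of) uniqueness of limit models, and without the union-of-chains property symmetry can genuinely fail. Alongside this I would try to \emph{derive} tameness from categoricity and the frame rather than assume it; in current technology this is the step most likely to require an extra hypothesis, e.g.\ a measurable or strongly compact cardinal below $\lambda$.

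\emph{Step 3 (transfer).} With a good $\mu$-frame plus tameness, categoricity transfers \emph{upward} by Grossberg--VanDieren-type arguments --- every model of size $\ge\mu$ becomes saturated, and saturated models of equal size are isomorphic --- giving categoricity in every cardinal $\ge\mu$. The \emph{downward} direction is the harder half: one iterates the Shelah two-cardinal machinery of \cite{Sh09b} to push categoricity from $\mu^{+}$ to $\mu$, and on down toward the Hanf number, so that categoricity holds on a full tail.

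\emph{Main obstacle.} The genuine difficulty is getting amalgamation and tameness in ZFC from categoricity alone (presently available only under extra set theory, or when the categoricity cardinal sits above a large cardinal), together with making the downward transfer unconditional. A second, sharper obstruction is the one this paper isolates: if unions of increasing continuous chains of limit models in $\mu$ fail to be amalgamation bases, then symmetry for nonsplitting --- hence uniqueness of limit models, hence the whole frame --- can collapse, so any complete proof must either establish that union property in ZFC or work in a setting where it holds automatically. For these reasons I regard a full unconditional proof as beyond current reach; the realistic deliverable is the conditional conjecture (under $2^{\mu}<2^{\mu^{+}}$ for the relevant $\mu$, or under large-cardinal hypotheses, or within the tame context) assembled from the modules above.
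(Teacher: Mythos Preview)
The statement you were asked to prove is Shelah's Eventual Categoricity \emph{Conjecture}: it is presented in the paper's introduction purely as motivation and remains open. The paper contains no proof of it, nor does it claim one; its actual contribution is Theorem~\ref{necessary theorem}, a characterization of uniqueness of limit models in cardinality $\mu$ for $\lambda$-categorical AECs with no maximal models when $\lambda=\mu^{+n}$, under GCH and weak-diamond hypotheses. So there is no ``paper's own proof'' to compare against.

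Your write-up is not a proof but a research program, and you say as much in your final paragraph. The outline is broadly accurate as a description of the Shelah--Grossberg--VanDieren--Vasey program, and you correctly identify the genuine obstructions (amalgamation and tameness from categoricity in ZFC, and the downward transfer). You also correctly locate where the present paper's result would slot into such a program: the equivalence between the union-of-limit-models property and uniqueness of limit models is exactly the kind of ingredient one needs to build a good frame. But none of this constitutes a proof of the conjecture, and several of the steps you list (deriving tameness from categoricity, unconditional downward transfer) are themselves open problems. The honest assessment is that the conjecture is open and your proposal is a survey of the known partial approaches rather than a proof attempt.
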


The amalgamation property seems to be key in proving categoricity transfer results related to this conjecture. In fact in 1986, Grossberg conjectured that the amalgamation property follows from categoricity \cite{Gr1}:
\begin{conjecture}\label{ap conj}
If $\K$ is an abstract elementary class categorical in a sufficiently large cardinality, then any triple of sufficiently large models from $\K$ can be amalgamated.
\end{conjecture}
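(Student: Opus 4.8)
Since Conjecture~\ref{ap conj} is a long-standing open problem, what follows is a research strategy rather than a complete argument; the plan is to reduce the amalgamation property to the uniqueness of limit models studied in this paper. Fix an AEC $\K$ categorical in some $\lambda$ well above $\LS(\K)$. The first step is to extract structure from categoricity below $\lambda$: via Shelah's presentation theorem, Ehrenfeucht--Mostowski models, and the associated Hanf-number computations, $\K$ has arbitrarily large models and amalgamation bases are dense in $\K_{<\lambda}$; moreover, in a tail of cardinals $\mu<\lambda$ one expects that unions of increasing continuous chains of amalgamation bases are again amalgamation bases, which is exactly the hypothesis singled out in the abstract. So I would isolate the longest possible chain of cardinals $\mu \le \mu^{+} \le \cdots < \lambda$ along which $(\mu,\sigma)$-limit models exist and, invoking the main theorem of the present paper, are unique over their base.

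The heart of the plan is to use uniqueness of limit models as a substitute for amalgamation at each cardinal in that chain. Given amalgamation bases $M_1$ and $M_2$ of size $\mu$ extending a common amalgamation base $M_0$, I would build increasing continuous towers over $M_0$ inside $M_1$ and inside $M_2$ whose unions are $(\mu,\sigma)$-limit models over $M_0$; uniqueness of the limit model over $M_0$ produces an isomorphism between these unions fixing $M_0$, and transporting along the towers amalgamates $M_1$ and $M_2$ over $M_0$ within a model of size $\mu$. Iterating this up the chain of cardinals --- tracking at limit stages which unions remain amalgamation bases, which is precisely where the continuity hypothesis is consumed --- promotes density of amalgamation bases to full amalgamation in $\K_{[\mu,\lambda)}$; a routine directed-system argument with EM models then carries amalgamation above $\lambda$, yielding the conjecture for all sufficiently large models.

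The main obstacle is securing the input to this machine in ZFC. In the successor case $\lambda=\mu^{+n}$, $0<n<\omega$, the results of this paper show the ``chain of limit models is an amalgamation base'' hypothesis is precisely the correct --- and, by the necessity direction, unavoidable --- assumption; but in the general case, especially when $\lambda$ is a limit cardinal or has small cofinality, there is no known ZFC derivation of this continuity property, and without it the tower construction above breaks at limit stages. A secondary obstacle is that categoricity controls $\K$ only below $\lambda$, so the final upward transfer of amalgamation past $\lambda$ genuinely requires tameness or a large-cardinal hypothesis (a proper class of strongly compact cardinals suffices, by Boney's work) to reach the ``proper class of sufficiently large cardinals'' the conjecture demands. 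I expect that any genuine proof must either construct the continuity hypothesis from categoricity alone or supply a different surrogate for it; that is the crux.
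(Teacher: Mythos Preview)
You have correctly recognized that Conjecture~\ref{ap conj} is not proved in the paper; it is stated as an open problem attributed to Grossberg, and the paper explicitly says it ``remains open in general.'' There is therefore no proof in the paper to compare your proposal against, and your decision to offer a research strategy rather than a proof is the right call.

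That said, the central step of your strategy has a genuine gap. You propose, given amalgamation bases $M_1,M_2$ over a common $M_0$, to build $(\mu,\sigma)$-limit models over $M_0$ \emph{inside} $M_1$ and $M_2$, apply uniqueness to get an isomorphism between those limit models over $M_0$, and then ``transport along the towers'' to amalgamate $M_1$ and $M_2$. But an isomorphism between proper submodels $L_1\prec_{\K}M_1$ and $L_2\prec_{\K}M_2$ fixing $M_0$ does not by itself amalgamate $M_1$ and $M_2$: you would still need to extend that isomorphism, or embed $M_1$ and $M_2$ over $L_1\cong L_2$ into something common, which is exactly the amalgamation you are trying to produce. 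Moreover, there is no reason an arbitrary amalgamation base $M_1$ of size $\mu$ contains a $(\mu,\sigma)$-limit model over $M_0$ at all; limit models have universality built in, and a generic $M_1$ need not be universal over anything. The Kolman--Shelah argument you allude to does pass through uniqueness of limit models, but it works because in their measurable-cardinal setting they can show that \emph{every} model in the relevant cardinality is (isomorphic to) a limit model, so the isomorphism from uniqueness already lives at the level of $M_1$ and $M_2$ themselves. Your sketch skips precisely that step, and without it the argument does not close.

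Your diagnosis of the obstacles---securing the continuity hypothesis in ZFC, and transferring amalgamation above $\lambda$---is accurate and well put, and aligns with the paper's own framing.
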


Some progress has been made on this conjecture, but it remains open in general.   Kolman and Shelah  prove that amalgamation follows from categoricity in $L_{\kappa,\omega}$-axiomatizable AECs where $\kappa$ is a measurable cardinal \cite{KoSh}.  They first prove that the uniqueness of limit models follows from categoricity and then use this to derive the amalgamation property.
Another result is that categoricity in $\lambda\geq\beth_{\beth_{\omega_1}}$  in universal classes implies the amalgamation property
 by Vasey \cite[Corollary 2]{Vas2}.  For this result, Vasey derives the amalgamation property by working in an auxiliary class of models.  The catch is that this auxiliary class does not immediately appear to be an AEC; in particular, it may fail the smoothness property (see \cite[Definition 2.4.6(b)]{Vas2}).

 This paper 
uncovers a relationship between the two distinct approaches mentioned in the previous paragraph by uniting the property of the uniqueness of limit models with smoothness.  We  show that under some set-theoretic and model-theoretic assumptions that categoricity implies that a property which is related to smoothness of a class (see \ref{assumption item} in Theorem \ref{necessary theorem} below)  is equivalent to the uniqueness of limit models (see \ref{limit item} below).  The set-theoretic and model-theoretic assumptions are based on those originally identified by Shelah and Villaveces \cite{ShVi}.
\begin{theorem}\label{necessary theorem}
Let $\mu=\kappa^+$ be  cardinal so that $\LS(\K)\leq\kappa$.  Fix $n$ a natural number larger than zero and set $\lambda=\mu^{+n}$.
Suppose that GCH holds and assume $\Phi_{\chi^+}(S^{\chi^+}_{\cf(\chi)})$ for every $\chi$ satisfying $\kappa\leq\chi<\lambda$.
If $\K$ is $\lambda$-categorical and has no maximal models, then the following are equivalent:
\begin{enumerate}[label=\numberingI{\arabic*}]
\item\label{assumption item}  The union of an increasing and continuous chain of limit models $\langle M_i\in\K_\mu\mid i<\alpha<\mu^+\rangle$ is an amalgamation base (Assumption \ref{new assumption}).
\item\label{limit item} If $M$ and $M'$ are limit models of cardinality $\mu$ over $M_0$, then $M$ and $M'$ are isomorphic over $M_0$ ($M\cong_{M_0}M'$).
\item\label{limit not over item} If $M$ and $M'$ are limit models of cardinality $\mu$, then $M$ and $M'$ are isomorphic ($M\cong M'$).
\item\label{union item} The union of an increasing  chain of saturated models dense with $\kappa$-amalgamation bases $\langle M_i\in\K_{\mu}\mid i<\alpha<\mu^{+}\rangle$ is  saturated.
\end{enumerate}

\end{theorem}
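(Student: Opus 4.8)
The plan is to prove the cycle \ref{assumption item}$\,\Rightarrow\,$\ref{union item}$\,\Rightarrow\,$\ref{limit item}$\,\Rightarrow\,$\ref{limit not over item}$\,\Rightarrow\,$\ref{assumption item}, working throughout in the Shelah--Villaveces context that the hypotheses supply: from $\lambda$-categoricity, GCH and the principles $\Phi_{\chi^+}(S^{\chi^+}_{\cf(\chi)})$ for $\kappa\le\chi<\lambda$ one has, at each level $\chi$ with $\kappa\le\chi<\lambda$ and in particular in $\K_\mu$, density of amalgamation bases, no maximal models, stability in $\chi^+$, existence of $(\chi^+,\sigma)$-limit models over every amalgamation base for each $\sigma<\chi^{++}$, the monotonicity and $\chi^+$-continuity of non-$\chi^+$-splitting, and the uniqueness of limit models \emph{of the same cofinality} over a fixed amalgamation base. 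I would also record several facts that do not require Assumption \ref{new assumption}: the base of a limit model is an amalgamation base; a $(\mu,\mu)$-limit model is saturated, since any $\K_{\le\kappa}$-submodel lies inside a single one of its $\mu$ pieces because $\cf(\mu)=\mu>\kappa$; and, using density of amalgamation bases to run the back-and-forth, a saturated model of cardinality $\mu$ is $\mu$-model-homogeneous, so that two such sharing an amalgamation-base submodel are isomorphic over it. The essential input, and the heart of the theorem, is the equivalence
\[
(\ast)\qquad M\in\K_\mu\text{ is a limit model}\iff M\in\K_\mu\text{ is saturated,}
\]
since a $(\mu,\omega)$-limit need not obviously realize Galois types over a cofinal $\K_{\le\kappa}$-submodel, and $\mu$-model-homogeneity of a size-$\mu$ saturated model says nothing directly about the size-$\mu$ extensions of its pieces, which is exactly what one needs in order to resolve it into a chain of universal extensions.

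\noindent\textbf{The main implication \ref{assumption item}$\,\Rightarrow\,$\ref{union item}.}
Here I would prove $(\ast)$ by re-running the reduced-tower analysis of \cite{Va} and \cite{Va-errata} with Assumption \ref{new assumption} in the role formerly played by the stronger hypothesis that the union of an increasing chain of limit models is a \emph{limit} model. Reduced towers $(\bar M,\bar a,\bar N)$ of every length below $\mu^+$ exist and are dense among towers, and the decisive step is that a reduced tower is continuous, i.e.\ its underlying $\prec$-chain takes unions at limit ordinals. The observation driving the present paper is that at a limit stage $i$ one only needs $\bigcup_{j<i}M_j$ to be an amalgamation base: this is enough to keep amalgamating the tower, to make sense of Galois types over the union, and to invoke the $\mu$-continuity of non-$\mu$-splitting, after which the continuity conclusion itself promotes ``amalgamation base'' to ``limit model'' a posteriori. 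Continuity of reduced towers then yields that the top model of a length-$\mu$ reduced tower is saturated, so every $(\mu,\mu)$-limit is saturated, and the same analysis gives conversely that every saturated model of size $\mu$ is a $(\mu,\mu)$-limit; this is $(\ast)$. Running the identical argument on an increasing chain of saturated models dense with $\kappa$-amalgamation bases forces its union to be saturated, which is \ref{union item}.

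\noindent\textbf{The remaining implications.}
These are short, given $(\ast)$ and the soft facts above. For \ref{union item}$\,\Rightarrow\,$\ref{limit item}: \ref{union item} yields, by induction on $\sigma$, that every $(\mu,\sigma)$-limit model is saturated --- refine it into a union of $(\mu,\mu)$-limit pieces and apply \ref{union item} at the limit stages, which become legitimate once the shorter initial segments are known saturated --- so two limit models over $M_0$ are saturated of size $\mu$ and $M_0$ is an amalgamation base, whence they are isomorphic over $M_0$ by the back-and-forth for $\mu$-model-homogeneous models. For \ref{limit item}$\,\Rightarrow\,$\ref{limit not over item}: comparing each of two limit models over its own base with a $(\mu,\mu)$-limit there shows both are saturated of size $\mu$, hence isomorphic via a back-and-forth from the empty model, using only density of amalgamation bases. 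For the necessity direction \ref{limit not over item}$\,\Rightarrow\,$\ref{assumption item}: let $\langle M_i\in\K_\mu\mid i<\alpha<\mu^+\rangle$ be increasing and continuous with each $M_i$ a limit model. By \ref{limit not over item} every limit model of size $\mu$ is isomorphic to a $(\mu,\mu)$-limit, hence saturated, hence $\mu$-model-homogeneous; so, since $M_i$ is an amalgamation base, for any $N\in\K_\mu$ with $M_i\prec N$ density gives a limit model $\widehat N$ with $M_i\prec N\prec\widehat N$, and composing with an isomorphism $\widehat N\cong_{M_i}M_{i+1}$ embeds $N$ into $M_{i+1}$ over $M_i$; thus $M_{i+1}$ is universal over $M_i$. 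The continuous chain $\langle M_i\rangle$ therefore witnesses that $\bigcup_{i<\alpha}M_i$ is a $(\mu,\alpha)$-limit model; by \ref{limit not over item} it is isomorphic to a $(\mu,\mu)$-limit, hence saturated, hence an amalgamation base. This closes the cycle.

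\noindent\textbf{Expected obstacle.}
The one genuine difficulty is the first implication, and within it the verification that every appeal in the continuity-of-reduced-towers argument of \cite{Va} and \cite{Va-errata} to ``the union of a chain of limit models is a limit model'' can be carried out knowing only that the union is an amalgamation base --- most delicately, the non-$\mu$-splitting bookkeeping at limit ordinals of cofinality $\omega$, where one must reason about Galois types over an amalgamation base that is not yet known to be a limit model. The other three implications are either the established Shelah--Villaveces package or routine back-and-forth arguments.
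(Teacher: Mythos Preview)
Your plan for the shorter implications is in the right spirit, although the paper runs the cycle in the order \ref{assumption item}$\Rightarrow$\ref{limit item}$\Rightarrow$\ref{limit not over item}$\Rightarrow$\ref{union item}$\Rightarrow$\ref{assumption item} rather than yours, and its \ref{limit not over item}$\Rightarrow$\ref{union item} is a nontrivial directed-system construction (adapting \cite{Va-union}) rather than a corollary of $(\ast)$. Your \ref{limit not over item}$\Rightarrow$\ref{assumption item} is a nice alternative to the paper's \ref{union item}$\Rightarrow$\ref{assumption item}, but note that the step ``$\widehat N\cong_{M_i}M_{i+1}$'' asks for an isomorphism over a model of the \emph{same} cardinality $\mu$ as the saturated models, so plain $\mu$-model-homogeneity does not apply; you need the back-and-forth through $\kappa$-sized amalgamation bases carried out in the paper as Proposition~\ref{unique saturated proposition} and Corollary~\ref{sat is limit}, together with density of $\kappa$-amalgamation bases inside limit models (Lemma~\ref{limits are dense with ab}).

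The genuine gap is in your main implication. You propose to obtain continuity of reduced towers by ``re-running the reduced-tower analysis of \cite{Va} and \cite{Va-errata}'' and you locate the obstacle in replacing ``the union of a chain of limit models is a limit model'' by ``\dots is an amalgamation base'' at limit stages. That replacement is indeed one point of the paper, but it is not where the hard work lies. The continuity of reduced towers was never proved in \cite{Va}; the argument there had a gap, and \cite{Va-errata} repairs it \emph{only} when $\lambda=\mu^+$, i.e.\ $n=1$. For $n>1$ the paper takes a substantial detour through $\mu$-symmetry for non-$\mu$-splitting (Definition~\ref{sym defn}): it shows $\mu$-symmetry is equivalent to continuity of reduced towers (Theorems~\ref{symmetry reduced} and~\ref{converse}), obtains $\mu^{+(n-1)}$-symmetry from categoricity in $\mu^{+n}$ via \cite{Va-errata} and Theorem~\ref{converse} (Corollary~\ref{categoricity symmetry down one}), and then transfers symmetry downward one cardinal at a time from $\mu^{+(n-1)}$ to $\mu$ (Theorem~\ref{categoricity implies symmetry}), adapting the amalgamation-property arguments of \cite{Va-sym} and \cite{Va-union} to the present setting with only density of amalgamation bases. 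None of this symmetry machinery appears in your outline; as written, your argument for \ref{assumption item}$\Rightarrow$\ref{union item} would go through only for $n=1$, and your ``expected obstacle'' misidentifies the actual difficulty.
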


Theorem \ref{necessary theorem} sheds light on a long-standing problem of deriving the uniqueness of limit models from categoricity in abstract elementary classes with no maximal models begun by Shelah and Villaveces \cite{ShVi}.  This is described in further detail in Section \ref{sec:structure of proof}.  Additionally, Theorem \ref{necessary theorem}  improves the main result of \cite{Va-errata} which is the implication $\ref{assumption item}\Rightarrow\ref{limit item}$ of Theorem \ref{necessary theorem} for $\lambda=\mu^+$. 

We begin with some preliminary definitions and results in the next section.
In Section \ref{sec:structure of proof} we summarize the literature on the uniqueness of limit models begun by Shelah and Villaveces
and outline the structure of Shelah and Villaveces' intended proof that categoricity implies the uniqueness of limit models
in \cite{ShVi}.  Section \ref{sec:preliminaries} explains how to negotiate saturated models when the amalgamation property is not assumed.
Then,
in Section \ref{sec:symmetry} we work on the implication $\ref{assumption item}\Rightarrow\ref{limit item}$ of Theorem \ref{necessary theorem}.  
We confirm that the error in the proof that reduced towers are continuous mentioned in \cite{Va-errata} can be addressed by proving $\mu$-symmetry.  
We verify that the 
proofs in the series of papers \cite{Va-sym} and \cite{Va-union} can be adapted  to this setting in which the full amalgamation property is not assumed.  From this we get not only the equivalence of $\mu$-symmetry and the statement that  reduced towers are continuous, but also  the fact that $\mu^+$-categoricity implies $\mu$-symmetry.
We then adopt \cite{Va-union}  to this setting to transfer the $\mu^{+(n-1)}$-symmetry down to  $\mu$.  Finally, Section \ref{sec:main theorem} contains the remainder of the proof of Theorem \ref{necessary theorem}.

We tackle the adaptation of the proofs from \cite{Va-union} and \cite{VV-transfer} in an upcoming paper which will be used to improve Theorem \ref{necessary theorem} by requiring only that $\mu<\lambda$.

%%%%%%%%%%%%%%%%%%%%%%%%%%%%%%%%%%%%%%%%%%%%%%%%%%%%%%%%%%%%%%%%%%%%%%%
\section{Background}\label{sec:background}

For the history of the literature surrounding the uniqueness of limit models and the preliminary definitions and notation (e.g. abstract elementary classes, Galois-types, stability, $\Phi_{\mu^+}(S^{\mu^+}_{\cf(\mu)})$, etc.), we refer the reader to \cite{Va}, \cite{GVV}, and \cite{BV-survey}.  Here we will review a few of the concepts that we use explicitly in the proof of Theorem \ref{necessary theorem}.

Although we will not have the full amalgamation property at our disposal in this paper, we do have enough amalgamation to carry out several arguments.  Here we recall the level of amalgamation that we are guaranteed in the context of Theorem \ref{necessary theorem}.

\begin{definition}
An \emph{amalgamation base} is a model $M\in\K_\mu$ for which any two models of cardinality $\mu$ extending $M$ in $\K$ can be amalgamated.  That is for every $M_1,M_2\in\K_\mu$ with $M\prec_{\K}M_1,M_2$, there is $M^*\in\K_\mu$ and $\K$-embeddings $f_1$ and $f_2$ so that the following diagram commutes:
\[
\xymatrix{\ar @{} [dr] M_1
\ar[r]_{f_1}  &M^* \\
M \ar[u]^{\id} \ar[r]_{\id}
& M_2 \ar[u]_{f_2}
}
\]
\end{definition}

The set-theoretic assumption $\Phi_{\mu^+}(S^{\mu^+}_{\cf(\mu)})$  along with categoricity above $\mu$ imply the density of amalgamation bases of cardinality $\mu$:

\begin{fact}[Theorem 1.2.5  of \cite{ShVi} or see Lemma 1.2.23 of \cite{Va}]
Suppose that  $\Phi_{\mu^+}(S^{\mu^+}_{\cf(\mu)})$ holds.
Assume that $\K$ is categorical in $\lambda$ and $\mu<\lambda$.

Then for every $M\in\K_\lambda$ and $N\prec_{\K}M$ of cardinality $\mu$, 
 there exists an amalgamation base $N'\in\K_\mu$ with $N\prec_{\K}N'\prec_{\K}M$.  
 
\end{fact}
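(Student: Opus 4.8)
The plan is to argue by contradiction, converting a failure of density of amalgamation bases into too many pairwise non-isomorphic models of cardinality $\mu^+$, with the prediction principle $\Phi_{\mu^+}(S^{\mu^+}_{\cf(\mu)})$ as the engine. Suppose the conclusion fails, so there are $M\in\K_\lambda$ and $N\prec_{\K}M$ with $|N|=\mu$ such that \emph{no} amalgamation base $N'\in\K_\mu$ satisfies $N\prec_{\K}N'\prec_{\K}M$. Using the L\"owenheim--Skolem axiom for AECs and $\mu^+\leq\lambda$, fix inside $M$ a $\prec_{\K}$-increasing continuous chain $\langle N_i\mid i<\mu^+\rangle$ with $N\preceq_{\K}N_0$, each $N_i\in\K_\mu$, and $\bigcup_{i<\mu^+}N_i\prec_{\K}M$. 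Every $N_i$ then satisfies $N\preceq_{\K}N_i\prec_{\K}M$, so by our assumption no $N_i$ is an amalgamation base; hence for each $i$ there are $N_i^0,N_i^1\in\K_\mu$ with $N_i\prec_{\K}N_i^0,N_i^1$ that cannot be amalgamated over $N_i$. These incompatible pairs supply the branching data.

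The core of the argument is to build a tree $\langle M_\eta\mid\eta\in{}^{<\mu^+}2\rangle$ of models in $\K_\mu$, $\preceq_{\K}$-increasing and continuous along each branch, with $N\preceq_{\K}M_{\langle\rangle}$, such that for every $\eta$ the two immediate successors $M_{\eta\frown\langle 0\rangle}$ and $M_{\eta\frown\langle 1\rangle}$ form a non-amalgamable pair over $M_\eta$ of the kind produced above, coherently glued so that the chains along branches remain increasing and continuous. For $\rho\in{}^{\mu^+}2$ put $M_\rho:=\bigcup_{i<\mu^+}M_{\rho\restriction i}\in\K_{\mu^+}$. One must then show that $\rho\neq\rho'$ implies $M_\rho\not\cong M_{\rho'}$, and this is precisely where $\Phi_{\mu^+}(S^{\mu^+}_{\cf(\mu)})$ is invoked: any isomorphism $f\colon M_\rho\to M_{\rho'}$ would, on a club of $i<\mu^+$, restrict to an isomorphism $M_{\rho\restriction i}\to M_{\rho'\restriction i}$, so one runs the construction so that at the stages in the guessing set $S^{\mu^+}_{\cf(\mu)}$ the choice of branching kills every predicted partial isomorphism; at the first place where $\rho$ and $\rho'$ split, the predicted map then contradicts the non-amalgamability of $M_{\eta\frown\langle 0\rangle}$ and $M_{\eta\frown\langle 1\rangle}$ over $M_\eta$. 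This yields $2^{\mu^+}$ pairwise non-isomorphic models in $\K_{\mu^+}$.

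It then remains to contradict $\lambda$-categoricity. If $\lambda=\mu^+$ this is immediate: categoricity says there is a unique model of cardinality $\mu^+$ up to isomorphism, whereas we have $2^{\mu^+}>1$ of them. For $\lambda>\mu^+$ one appeals to the standard consequence of categoricity --- used by Shelah and Villaveces and, given the set-theoretic hypotheses, routine at that point --- that a $\lambda$-categorical AEC with $\LS(\K)\leq\mu$ cannot have $2^{\mu^+}$ pairwise non-isomorphic models of cardinality $\mu^+$ when $\mu^+\leq\lambda$; see \cite{ShVi}. I expect the only genuinely delicate point to be the tree construction in the second paragraph: threading the non-amalgamation witnesses coherently through the entire binary tree while simultaneously defeating every guess delivered by $\Phi_{\mu^+}(S^{\mu^+}_{\cf(\mu)})$. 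Once the tree is built correctly, the non-isomorphism of the branch-models, and hence the final counting, follow from the club/guessing argument sketched above; the reduction to a contradiction, the L\"owenheim--Skolem chain, and the extraction of incompatible pairs are bookkeeping.
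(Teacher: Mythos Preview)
The paper does not prove this statement; it is recorded as a Fact cited from \cite{ShVi} and \cite{Va}, so there is no in-paper argument to compare against. Your overall strategy---convert failure of density into a branching construction, invoke the weak diamond, contradict categoricity---is indeed the shape of the argument in those sources.

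There is, however, a real gap in your tree construction, and it is conceptual rather than delicate bookkeeping. You propose a full binary tree in which every node $M_\eta$ admits a non-amalgamable pair of successors. But the negated conclusion only says that models $N'$ with $N\prec_{\K}N'\prec_{\K}M$ fail to be amalgamation bases---that is, only models \emph{inside $M$}. Once a branch leaves $M$ (immediately after the first nontrivial split), the node $M_\eta$ may well be an amalgamation base, and you have no mechanism to branch there. Transporting the pair $(N_i^0,N_i^1)$ to $M_\eta$ would require amalgamating $M_\eta$ with $N_i^\ell$ over a copy of $N_i$, and $N_i$ is precisely not an amalgamation base. The key observation the cited arguments exploit instead is that for each $i$ there is $A_i\in\K_\mu$ with $N_i\prec_{\K}A_i$ admitting no $\K$-embedding into $M$ over $N_i$ (otherwise $M$ itself would amalgamate any two $\K_\mu$-extensions of $N_i$, making $N_i$ an amalgamation base after all); the weak diamond then guides the construction so that every candidate embedding of the resulting $\mu^+$-model into $M$ is defeated at some stage in $S^{\mu^+}_{\cf(\mu)}$.

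Your final step for $\lambda>\mu^+$ is also not right as written: $\lambda$-categoricity does not by itself bound the number of models in $\K_{\mu^+}$, so a model count does not yield a contradiction. The contradiction in \cite{ShVi} is more direct: under no maximal models (ambient there), the categoricity model is universal for $\K_{\leq\lambda}$, so the model built to avoid embedding into $M$ must nonetheless embed into it.
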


\begin{definition}
For $\mu\geq\LS(\K)$ and $\theta$ a limit ordinal $<\mu^+$, we say that
$M\in\K_\mu$ is a \emph{$(\mu,\theta)$-limit model} if there exists  an increasing and continuous sequence of amalgamation bases $\langle M_i\in\K_\mu\mid i<\theta\rangle$ so that $M=\Union_{i<\theta}M_i$ and $M_{i+1}$ is universal over $M_i$.  In this case we say that $M$ is a $(\mu,\theta)$-limit model over $M_0$.  We also say $M$ is a \emph{limit model} if there is a limit ordinal $\theta<\mu^+$ for which $M$ is a $(\mu,\theta)$-limit model.
\end{definition}

In the context of Theorem \ref{necessary theorem}, limit models are amalgamation bases:
\begin{fact}[Fact 1.3.10 of \cite{ShVi} or Theorem 1.3.13 of \cite{Va} ]\label{limits are ab}
Suppose that $\K$ has no maximal models and is categorical in $\lambda$ and that $\mu$ is a cardinal with $\lambda>\mu\geq\LS(\K)$.  Assume that GCH holds.  Then any limit model of cardinality $\mu$ is an amalgamation base.  Additionally, for every amalgamation base $M\in\K_{\mu}$ and for every limit ordinal $\theta<\mu^+$, there exists a $(\mu,\theta)$-limit model $M'$ over $M$.
\end{fact}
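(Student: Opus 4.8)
The plan is to follow the strategy of Shelah--Villaveces, as corrected and completed in \cite{Va}, and to prove the two assertions by a single transfinite induction on the limit ordinal $\theta<\mu^+$. Write $(\mathrm{E}_\theta)$ for ``over every amalgamation base $M\in\K_\mu$ there is a $(\mu,\theta)$-limit model over $M$'' and $(\mathrm{A}_\theta)$ for ``every $(\mu,\theta)$-limit model of cardinality $\mu$ is an amalgamation base''. The ingredients I would take as available from the hypotheses are: density of amalgamation bases in $\K_\mu$ (GCH yields the weak-diamond instance $\Phi_{\mu^+}(S^{\mu^+}_{\cf(\mu)})$, and then the density fact quoted above applies); stability in $\mu$, i.e.\ at most $\mu$ Galois-types over any amalgamation base of size $\mu$, which is derived from $\lambda$-categoricity together with density; the absence of maximal models; and the categoricity model $\mathfrak C\in\K_\lambda$, into which every model of cardinality $\mu$ $\K$-embeds --- to see the last point, build a $\prec_{\K}$-increasing tower of models of increasing cardinalities up to $\lambda$ using that $\K$ has no maximal models, and apply categoricity to the top model.

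For $(\mathrm{E}_\theta)$ I would build an increasing continuous chain $\langle M_i\mid i<\theta\rangle$ with $M_0=M$. At a successor $i+1$: since $M_i$ is an amalgamation base, stability and amalgamation over $M_i$ let me build a $\K$-extension of $M_i$ of cardinality $\mu$ realizing all Galois-types over $M_i$ (as a union of a $\prec_{\K}$-chain of length $\mu$, using that there are no maximal models), which is universal over $M_i$; by density I then pick an amalgamation base $M_{i+1}$ above it, which remains universal over $M_i$. At a limit $i<\theta$: put $M_i=\Union_{j<i}M_j$; this is a $(\mu,i)$-limit model, hence an amalgamation base by the inductive hypothesis $(\mathrm{A}_i)$, so the construction continues. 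The union $\Union_{i<\theta}M_i$ is the required $(\mu,\theta)$-limit model over $M$.

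The substance is $(\mathrm{A}_\theta)$. Let $M^{\ast}=\Union_{i<\theta}M_i$ be a $(\mu,\theta)$-limit model and suppose $M^{\ast}\prec_{\K}N_1$ and $M^{\ast}\prec_{\K}N_2$ with $N_1,N_2\in\K_\mu$; I must amalgamate $N_1$ and $N_2$ over $M^{\ast}$. First I would reduce to $\theta$ regular: choosing a continuous cofinal sequence $\langle\theta_k\mid k<\cf(\theta)\rangle$, each $M_{\theta_k}$ is a $(\mu,\theta_k)$-limit model (an amalgamation base by $(\mathrm{A}_{\theta_k})$), $M_{\theta_{k+1}}$ is universal over $M_{\theta_k}$, and continuity transfers, so $M^{\ast}$ is already a $(\mu,\cf(\theta))$-limit model. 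Then I would run a back-and-forth of length $\theta$ along the chain: using density, write $N_\ell=\Union_{i<\theta}N_\ell^{\,i}$ for $\ell=1,2$ as increasing continuous unions of amalgamation bases of size $\mu$ with $M_i\prec_{\K}N_\ell^{\,i}$, and construct an increasing continuous chain $\langle P_i\mid i<\theta\rangle$ together with coherent $\K$-embeddings $f_i\colon N_1^{\,i}\to P_i$ and $g_i\colon N_2^{\,i}\to P_i$ satisfying $f_i\restriction M_i=g_i\restriction M_i$. Taking unions then yields an amalgam of $N_1$ and $N_2$ over $M^{\ast}$.

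The successor step of this back-and-forth is the main obstacle, and it is exactly where the structure of a limit model is used. Having arranged agreement on $M_i$, I must pass to agreement on $M_{i+1}$ while extending $f_i$ over $N_1^{\,i+1}$ and $g_i$ over $N_2^{\,i+1}$. The plan is: first extend one side by amalgamating over the amalgamation base $N_1^{\,i}$, and then exploit that $M_{i+1}$ is universal over $M_i$ --- so the relevant size-$\mu$ pieces of both $N_1^{\,i+1}$ and $N_2^{\,i+1}$ can be placed over $M_i$ inside $M_{i+1}$ --- to realign the two partial embeddings on all of $M_{i+1}$, amalgamating only over the amalgamation bases $N_1^{\,i}$, $N_2^{\,i}$ and their images. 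It is essential that no amalgamation over $M_{i+1}$ or over $M^{\ast}$ is invoked, which is what keeps the induction non-circular. I expect the delicate bookkeeping here --- and, in the Shelah--Villaveces treatment, the parallel task of showing that $\mathfrak C$ is $\mu$-saturated over amalgamation bases, which is the technically heaviest point and is where the weak-diamond / GCH instances at the intermediate cardinals are spent --- to be the real work; everything else is routine chain-building.
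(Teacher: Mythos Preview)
This statement is quoted in the paper as a fact from \cite{ShVi} and \cite{Va} without proof, so there is no in-paper argument to compare against directly; I assess your proposal on its own terms and against the route taken in those references.

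Your treatment of $(\mathrm E_\theta)$ and of the reduction to regular $\theta$ is fine. The genuine problem is the successor step of your back-and-forth for $(\mathrm A_\theta)$, which you correctly flag as the crux but do not actually carry out. After amalgamating over the amalgamation bases $N_1^{\,i}$ and $N_2^{\,i}$ you obtain extensions $f'\colon N_1^{\,i+1}\to P'$ and $g'\colon N_2^{\,i+1}\to P''$ of $f_i$ and $g_i$; these satisfy $f'\restriction M_i=g'\restriction M_i$, but there is no mechanism to force $f'\restriction M_{i+1}=g'\restriction M_{i+1}$. Universality of $M_{i+1}$ over $M_i$ lets you embed size-$\mu$ extensions of $M_i$ \emph{into} $M_{i+1}$; it does not let you make two given embeddings \emph{of} $M_{i+1}$ into a common target coincide. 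To ``realign on $M_{i+1}$'' you would need either to amalgamate over a model containing both $N_\ell^{\,i}$ and $M_{i+1}$ --- no such model is known to be an amalgamation base --- or to have homogeneity of the target over $M_i$, which you do not have. A secondary issue: at limit $i<\theta$ your $N_\ell^{\,i}=\bigcup_{j<i}N_\ell^{\,j}$ is merely an increasing union of amalgamation bases, not a limit model, so by your own inductive hypothesis it is not known to be an amalgamation base; arranging $N_\ell^{\,j+1}$ universal over $N_\ell^{\,j}$ would fix this but again presupposes amalgamation over models whose status is exactly what is in question.

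The argument in \cite{ShVi} and \cite{Va} is organized differently: rather than a symmetric back-and-forth between $N_1$ and $N_2$, one works inside the categorical model $\mathfrak C$ and establishes that $\mathfrak C$ is $\mu^+$-universal over the relevant submodels of cardinality $\mu$. Once $M^\ast\prec\mathfrak C$ and every size-$\mu$ extension of $M^\ast$ embeds into $\mathfrak C$ over $M^\ast$, amalgamation of $N_1$ and $N_2$ over $M^\ast$ is immediate. You gesture at this in your closing paragraph (``$\mathfrak C$ is $\mu$-saturated over amalgamation bases''), but your back-and-forth neither invokes that statement nor substitutes for it; the work you defer to ``delicate bookkeeping'' is in fact a different argument from the one you outlined, and it is that argument --- not your two-sided amalgamation chain --- that actually closes the gap.
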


By $\mu^+$-many repeated applications of Fact \ref{limits are ab}, for any amalgamation base $M\in\K_\mu$ we can find a $(\mu,\mu^+)$-limit model over $M$.  This model is saturated and will serve as a replacement for a monster model.  We will use  $\C$ to denote such a model in the following sections.

\begin{remark}
Note that if $M$ and $M'$ are $(\mu,\theta)$- and $(\mu,\theta')$-limit models, respectively, over $M_0$ and $\cf(\theta)=\cf(\theta')$, then by a back-and-forth construction, $M$ and $M'$ are isomorphic over $M_0$.  Therefore Claim \ref{main claim} is only interesting when $\cf(\theta)\neq\cf(\theta')$.
\end{remark}

Next we recall the definition of the dependence relation that we will be using throughout this paper: $\mu$-splitting.
\begin{definition}\label{mu-split defn}
For $M\in\K_\mu$ an amalgamation base and
$p\in \gaS(M)$, we say that \emph{$p$ $\mu$-splits over
$N$}
iff
$N\prec_{\K}M$ and there exist amalgamation bases $N_1,N_2\in\K_\mu$ and a
$\prec_{\K}$-mapping $h:N_1\cong N_2$ such that
\begin{enumerate}

\item $N\prec_{\K}N_1,N_2\prec_{\K}M$,
\item $h(p\restriction N_1)\neq p\restriction N_2$
 and
\item $h\restriction N= id_N$.
\end{enumerate}
\end{definition}

While $\mu$-splitting is not as versatile as forking, it does have the  extension and uniqueness properties:

\begin{fact}[Theorem I.4.10 of \cite{Va}]
Suppose that $M\in\K_\mu$ is an amalgamation base and universal over $N$ and $M'$ is an extension of $M$ of cardinality $\mu$ inside $\C$. 
If $\tp(a/M)$ does not $\mu$-split over $N$ and there exists $g\in\Aut_M(\C)$ so that $\tp(g(a)/M')$ does not $\mu$-split over $N$.
\end{fact}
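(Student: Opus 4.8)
I read this as the \emph{extension property} for $\mu$-non-splitting: granting that $M$ is universal over $N$, if $\tp(a/M)$ does not $\mu$-split over $N$ then there is $g\in\Aut_M(\C)$ for which $\tp(g(a)/M')$ does not $\mu$-split over $N$ --- equivalently, $\tp(a/M)$ has a non-$\mu$-splitting extension in $\gaS(M')$ realized inside $\C$. The plan is to use the universality of $M$ over $N$ to ``fold'' the larger model $M'$ back inside $M$, to pull the non-splitting type of $a$ backwards along that folding so as to obtain a candidate $q\in\gaS(M')$, and then to put $q$ into the required form by an automorphism of $\C$ that fixes $M$.

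Concretely, I would first fix a $\K$-embedding $g\colon M'\to M$ with $g\restriction N=\id_N$; this exists because $M$ is universal over $N$ and $M'\succ N$ has cardinality $\mu$. Using the model-homogeneity properties of $\C$ developed in Section \ref{sec:preliminaries}, extend $g$ to an automorphism $\hat g$ of $\C$ --- which then fixes $N$ pointwise, since $g$ does --- and set $b:=\hat g^{-1}(a)$ and $q:=\tp(b/M')\in\gaS(M')$. Two facts about $q$ must then be checked. \emph{$q$ extends $\tp(a/M)$}: here $q\restriction M=\tp(\hat g^{-1}(a)/M)$, so it is enough to see that $\hat g$ carries $\tp(a/M)$ to $\tp(a/\hat g(M))=\tp(a/M)\restriction\hat g(M)$, where $\hat g(M)=g(M)\prec_{\K}M$; but this is precisely the instance of non-$\mu$-splitting of $\tp(a/M)$ over $N$ obtained by taking, in Definition \ref{mu-split defn}, $N_1=M$, $N_2=g(M)$, and $h=g\restriction M\colon M\cong g(M)$ (an isomorphism fixing $N$). \emph{$q$ does not $\mu$-split over $N$}: if it did, witnessed by amalgamation bases $N_1,N_2\prec_{\K}M'$ and an isomorphism $h\colon N_1\cong N_2$ with $h\restriction N=\id_N$ and $h(q\restriction N_1)\neq q\restriction N_2$, then applying $\hat g$ transports this to amalgamation bases $\hat g(N_1),\hat g(N_2)\prec_{\K}\hat g(M')=g(M')\prec_{\K}M$, to the isomorphism $\hat g h\hat g^{-1}$ (still fixing $N$), and --- since $\hat g(q)=\tp(a/g(M'))=\tp(a/M)\restriction g(M')$ --- to the inequality $(\hat g h\hat g^{-1})\big(\tp(a/M)\restriction\hat g(N_1)\big)\neq\tp(a/M)\restriction\hat g(N_2)$, which says exactly that $\tp(a/M)$ $\mu$-splits over $N$, against the hypothesis.

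To finish, the first of these checks says that $a$ and $b$ realize the same Galois type over the amalgamation base $M$ in $\C$, so the model-homogeneity of $\C$ yields $g\in\Aut_M(\C)$ with $g(a)=b$; then $\tp(g(a)/M')=q$ does not $\mu$-split over $N$ (and restricts to $\tp(a/M)$ on $M$), which is what is wanted. I expect the only genuinely delicate point to be the two appeals to homogeneity of $\C$ --- extending $g$ to $\hat g$, and conjugating $a$ to $b$ by a member of $\Aut_M(\C)$ --- precisely because the full amalgamation property is not available here; these are exactly the statements about the $(\mu,\mu^+)$-limit model $\C$ that Section \ref{sec:preliminaries} is designed to supply (enough homogeneity for isomorphisms between, and for realizations of Galois types over, amalgamation bases of cardinality $\mu$ sitting inside $\C$). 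Everything else is bookkeeping with the transport of Galois types along $\hat g$; and the uniqueness statement mentioned just before the Fact comes out of the same folding together with the invariance of $\mu$-splitting under automorphisms of $\C$.
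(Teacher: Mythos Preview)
The paper does not prove this statement; it is quoted without proof as a known fact from \cite{Va} (Theorem I.4.10 there), so there is no ``paper's proof'' to compare against. Your argument is the standard one and is correct in outline: fold $M'$ into $M$ over $N$ by universality, transport the type along the inverse of that folding, and verify (i) the extension to $M$ agrees with $\tp(a/M)$ via the non-splitting hypothesis applied to $N_1=M$, $N_2=g(M)$, $h=g\restriction M$, and (ii) non-splitting over $N$ is preserved by conjugation by $\hat g$. You have also correctly flagged the one genuinely nontrivial ingredient, namely the two appeals to homogeneity of $\C$: extending the folding $g$ to $\hat g\in\Aut(\C)$, and finding the final $\Aut_M(\C)$-conjugacy from $a$ to $b$. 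These are not supplied by Section~\ref{sec:preliminaries} of the present paper (which is about saturated models of cardinality $\kappa^+$ dense with $\kappa$-amalgamation bases, not about $\C$); rather, they are properties of the $(\mu,\mu^+)$-limit model established in \cite{Va} itself (the back-and-forth works because at each stage one amalgamates over a limit model, hence an amalgamation base). Since the statement is cited from \cite{Va}, relying on those facts is entirely appropriate --- just cite them from \cite{Va} rather than from Section~\ref{sec:preliminaries}.

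One small technical point: in your check of (i) you take $N_1=M$, but Definition~\ref{mu-split defn} as written uses $N_1,N_2\prec_{\K}M$, which in this paper is sometimes strict. This is harmless --- either read $\prec_{\K}$ as $\preceq_{\K}$ (as is standard for this definition), or observe that any failure of $h(p\restriction M)=p\restriction g(M)$ would already be visible on a proper amalgamation-base submodel, giving a bona fide splitting witness.
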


\begin{fact}[Theorem I.4.12 of \cite{Va}]
Suppose that $N,M, M'\in\K_\mu$ are amalgamation bases with $M'$ universal over $M$ and $M$ universal over $N$.  If $p\in\gaS(M)$ does not $\mu$-split over $N$, then there exists a unique $p'\in\gaS(M')$ such that $p'$ extends $p$ and $p'$ does not $\mu$-split over $N$.
\end{fact}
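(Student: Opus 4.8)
I would prove existence and uniqueness separately; existence is essentially a restatement of the preceding extension property, so the real (though short) work is in the uniqueness half.

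For existence, write $p=\tp(a/M)$. Since $M$ is an amalgamation base universal over $N$ and $M'$ is an extension of $M$ of cardinality $\mu$ inside $\C$, the extension property (the Fact immediately above, Theorem I.4.10 of \cite{Va}) yields $g\in\Aut_M(\C)$ with $\tp(g(a)/M')$ not $\mu$-splitting over $N$. Set $p':=\tp(g(a)/M')\in\gaS(M')$. As $g$ fixes $M$ pointwise, $p'\restriction M=\tp(g(a)/M)=\tp(a/M)=p$, so $p'$ is a non-$\mu$-splitting extension of $p$ to $M'$.

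For uniqueness, suppose $p'_1,p'_2\in\gaS(M')$ both extend $p$ and neither $\mu$-splits over $N$; the goal is $p'_1=p'_2$. The plan is to fold $M'$ back into $M$ over $N$. Because $M$ is universal over $N$ and $M'\in\K_\mu$ extends $N$, there is a $\K$-embedding $h\colon M'\to M$ with $h\restriction N=\id_N$; then $h[M']$ is an amalgamation base with $N\prec_\K h[M']\prec_\K M\prec_\K M'$. Apply Definition \ref{mu-split defn} with $N_1:=M'$ (legitimate since $\prec_\K$ is reflexive), $N_2:=h[M']$, and the isomorphism $h\colon N_1\cong N_2$, which fixes $N$: since neither $p'_i$ $\mu$-splits over $N$, we get $h(p'_i)=h(p'_i\restriction N_1)=p'_i\restriction N_2=p'_i\restriction h[M']$ for $i=1,2$. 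Since $h[M']\prec_\K M$ and $p'_1,p'_2$ agree on $M$ (both equal $p$ there), restricting further gives $p'_1\restriction h[M']=p'_2\restriction h[M']$. Hence $h(p'_1)=h(p'_2)$, and applying $h^{-1}$ yields $p'_1=p'_2$.

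The only point needing care is checking that $(N_1,N_2,h)=(M',h[M'],h)$ is an admissible witness configuration in Definition \ref{mu-split defn} — that $h[M']$ is an amalgamation base in $\K_\mu$ and $N\prec_\K h[M']\prec_\K M'$ — but this is immediate, since $h$ is a $\prec_\K$-embedding fixing $N$ pointwise and $h[M']\prec_\K M\prec_\K M'$. Note that this argument uses only that $M'$ is an amalgamation base above $M$, not that $M'$ is universal over $M$; it is the universality of $M$ over $N$ that drives the proof. (In the same way one sees, more generally, that a non-$\mu$-splitting type over $M'$ is determined by its restriction to any amalgamation base lying $\prec_\K$-between $N$ and $M'$ that is universal over $N$.)
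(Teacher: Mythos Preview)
The paper does not give its own proof of this statement---it is cited as a Fact from \cite{Va}---so there is no in-paper argument to compare against; your proof is correct and is precisely the standard argument (use universality of $M$ over $N$ to embed $M'$ back into $M$ over $N$, then apply the contrapositive of the splitting definition). The only cosmetic point is that taking $N_1=M'$ requires reading $\prec_{\K}$ in Definition~\ref{mu-split defn} as allowing equality, which is the intended reading in this paper and in \cite{Va}.
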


The uniqueness of limit models is related to the statement that the union of saturated models is saturated, which in first order model theory is equivalent to superstability.  Therefore we will be considering $\mu$-superstable abstract elementary classes:
We will use the following definition of $\mu$-superstability:
\begin{definition}\label{ss defn}
$\K$ is \emph{$\mu$-superstable} if $\K$ is Galois-stable in $\mu$ and 
 $\mu$-splitting satisfies the property:
for all infinite $\alpha<\mu^+$, for every sequence $\langle M_i\mid i<\alpha\rangle$ of
  limit models of cardinality $\mu$ with $M_{i+1}$ universal over $M_i$, and for every $p\in\gaS(M_\alpha)$, where
  $M_\alpha=\bigcup_{i<\alpha}M_i$, we have that
there exists $i<\alpha$ such that $p$
does not $\mu$-split over $M_i$.

\end{definition}

\begin{remark}\label{ss remark}
Shelah and Villaveces show that under the assumptions of Theorem \ref{necessary theorem}, $\K$ is $\mu$-superstable \cite[Fact 2.1.3 and Theorem 2.2.1]{ShVi}.  Their proof  uses GCH, but in a non-essential way.  At the point that they use $2^{<\mu}=\mu$, the replacement of choosing minimal $\chi\leq\mu$ so that $2^{\chi}>\mu$ would be sufficient.  
\end{remark}

We will see that, in fact, a slightly stronger form of $\mu$-superstability follows from categoricity.  This stronger form of $\mu$-superstability is Definition \ref{ss defn} with the additional condition of $\mu$-symmetry.  The property of $\mu$-symmetry was introduced in \cite{Va-sym} and used to prove the uniqueness of limit models assuming the amalgamation property \cite{Va-union, VV-transfer}.  Here, we will adapt these proofs to the setting of \cite{ShVi} where the full amalgamation property is not assumed.

Before moving to the proof of Theorem \ref{necessary theorem}, we recall a fact about directed systems.   
The following is implicit in the proof of Theorem III.10.1 of \cite{Va}.  This fact is used to construct extensions of amalgamable towers in \cite{Va}. 
Key is the assumption that $\Union_{i<\theta}N_i$ is an amalgamation base.  Without this assumption, the direct limit may not lie in $\C$.  This was exactly the point in \cite{Va}
where an additional assumption was introduced  to resolve one of the issues with Shelah and Villaveces' proof of the uniqueness of limit models.  Here we show that a related assumption to the one in \cite{Va} is not only sufficient to derive the uniqueness of limit models but it is necessary.

\begin{fact}\label{direct limit lemma}
Suppose that $\theta$ is a limit ordinal and $\langle M_i\in\K_{\mu}\mid i<\theta\rangle$ and $\langle f_{i,j}\mid i\leq j<\theta\rangle$ form a directed system.  
Assume that each $M_i$ is an amalgamation base and that each $f_{i,j}$ can be extended to an automorphism of $\C$.
If $\theta$ is a limit ordinal $<\mu^+$ and $\langle N_i\mid i\leq\theta\rangle$ is an increasing and continuous sequence of amalgamation bases so that for every $i<\theta$, $N_i\prec_{\K}M_i$ and $f_{i,i+1}\restriction N_i=\id_{N_i}$, then there is a direct limit $M^*\prec_{\K}\C$ of the system  and $\K$-embeddings $\langle f_{i,\theta}\mid i<\theta\rangle$  so that 
\begin{enumerate}
\item each $f_{i,\theta}$ can be extended to an automorphism of $\C$
\item $\Union_{i<\theta}N_i\preceq_{\K}M^*$ and 
\item $f_{i,\theta}\restriction N_i=id_{N_i}$.
\end{enumerate}
\end{fact}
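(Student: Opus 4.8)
The plan is to construct the direct limit abstractly, using only the AEC axioms, and then to transplant it into $\C$ over the model $N:=\Union_{i<\theta}N_i$; the hypothesis that this union is an amalgamation base is exactly what makes the transplant possible, and, as the discussion preceding the statement makes clear, it is the only point at which the direct limit might otherwise fail to embed into $\C$. First I would record a few preliminary observations. Since each $M_i\prec_{\K}\C$ (this is implicit in the hypothesis that the $f_{i,j}$ extend to automorphisms of $\C$) and $N_i\prec_{\K}M_i$, transitivity of $\prec_{\K}$ gives $N_i\prec_{\K}\C$, and then the union axiom gives $N\prec_{\K}\C$; by continuity of $\langle N_i\mid i\le\theta\rangle$ we have $N=N_\theta$, so $N$ is an amalgamation base of cardinality $\mu$. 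Also, from the directed-system identities and the continuity of the system one checks that $f_{i,j}\restriction N_i=\id_{N_i}$ for all $i\le j<\theta$, not merely for successors.

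Next I would form the direct limit $\overline{M}$ of $\langle M_i,f_{i,j}\rangle$, with the canonical $\K$-embeddings $\overline{f}_i\colon M_i\to\overline{M}$ satisfying $\overline{f}_i=\overline{f}_j\circ f_{i,j}$ and $\overline{M}=\Union_{i<\theta}\overline{f}_i(M_i)$; this exists because abstract elementary classes are closed under direct limits of directed systems of $\K$-embeddings, and $\overline{M}\in\K_\mu$ since $|\overline{M}|\le|\theta|+\mu=\mu$. Because $\overline{f}_i\restriction N_i=(\overline{f}_j\circ f_{i,j})\restriction N_i=\overline{f}_j\restriction N_i$ for $i\le j$, the maps $\overline{f}_i\restriction N_i$ cohere, and by the chain axioms their union is a $\K$-embedding $g\colon N\to\overline{M}$ with $g(N)\prec_{\K}\overline{M}$ and $g\restriction N_i=\overline{f}_i\restriction N_i$ for every $i$.

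The heart of the argument is then the embedding of $\overline{M}$ into $\C$ over $N$. The model $g(N)$ is an amalgamation base of cardinality $\mu$ (being isomorphic, via $g$, to $N$) with $g(N)\prec_{\K}\overline{M}$; since $\C$ is a $(\mu,\mu^+)$-limit model it is saturated and functions as a monster model for amalgamation bases of cardinality $\mu$ — in particular it is universal over $N$ — so the $\K$-embedding $g^{-1}\colon g(N)\to N\prec_{\K}\C$ extends to a $\K$-embedding $h\colon\overline{M}\to\C$ (here I would appeal to the relevant properties of $\C$ established in \cite{Va}). Put $M^*:=h(\overline{M})\prec_{\K}\C$ and $f_{i,\theta}:=h\circ\overline{f}_i$. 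Then $\langle M^*,f_{i,\theta}\rangle$ is again a direct limit of the system, since $h$ is an isomorphism of $\overline{M}$ onto $M^*$; in particular $f_{i,\theta}=f_{j,\theta}\circ f_{i,j}$ for $i\le j<\theta$. Conclusion (2) follows from $h\circ g=\id_N$, which gives $\Union_{i<\theta}N_i=N=h(g(N))\prec_{\K}h(\overline{M})=M^*$; conclusion (3) follows from $f_{i,\theta}\restriction N_i=h\circ(g\restriction N_i)=(h\circ g)\restriction N_i=\id_{N_i}$; and for (1), each $f_{i,\theta}$ is an isomorphism between the amalgamation bases $M_i$ and $f_{i,\theta}(M_i)$, both strong submodels of $\C$ of cardinality $\mu$, hence extends to an automorphism of $\C$ by the model-homogeneity of $\C$.

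The step I expect to be the crux — conceptually, if not in terms of length — is the construction of $h$: everything else is a matter of assembling standard facts about direct limits and about the monster model $\C$, whereas the embedding of $\overline{M}$ into $\C$ genuinely depends on $N=\Union_{i<\theta}N_i$ being an amalgamation base, without which $\overline{M}$ need not embed into $\C$ at all. The only other place requiring a little care is the claim that $f_{i,j}\restriction N_i=\id_{N_i}$ at limit stages $j<\theta$, which relies on the directed system being continuous at such stages.
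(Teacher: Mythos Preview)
The paper does not actually prove this statement: it is recorded as a \emph{Fact}, with the remark that it is implicit in the proof of Theorem~III.10.1 of \cite{Va}, and no argument is given here. Your proposal is the natural reconstruction --- form the abstract direct limit $\overline{M}$, observe that the $\overline{f}_i\restriction N_i$ cohere to give a copy $g(N)$ of $N=\Union_{i<\theta}N_i$ inside $\overline{M}$, and then use that $N$ is an amalgamation base together with the $(\mu,\mu^+)$-limit structure of $\C$ to transplant $\overline{M}$ into $\C$ over $N$ --- and it is correct. Your verification of conclusions (2) and (3) via $h\circ g=\id_N$ is clean, and the appeal to model-homogeneity of $\C$ over amalgamation bases for conclusion (1) is justified (any amalgamation base $A\prec_{\K}\C$ of cardinality $\mu$ has $\C$ as a $(\mu,\mu^+)$-limit over it, so a back-and-forth goes through).

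The one point worth flagging --- and you do flag it --- is the claim that $f_{i,j}\restriction N_i=\id_{N_i}$ for limit $j$. The statement as written does not assert that the directed system is continuous at limit stages, and without continuity this claim does not follow from the successor hypothesis alone. In every application of this fact within the paper the directed systems are built recursively with direct limits taken at limit stages, so continuity holds there; but as a matter of the bare statement you are right to note that this is being read in.
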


%%%%%%%%%%%%%%%%%%%%%%%%%%%%%%%%%%%%%%%%%%%%%%%%%%%%%%%%%%%%%%%%%%%%%%%

\section{Shelah and Villaveces' Approach to the Uniqueness of Limit Models}\label{sec:structure of proof}
Shelah and Villaveces endeavor to prove the uniqueness of limit models in categorical AECs with no maximal models \cite{ShVi}.  They use set-theoretic assumptions to derive the density of amalgamation bases from categoricity, and then they attempt to prove the uniqueness of limit models.  This property of the uniqueness of limit models is not only a stepping stone to both derive the amalgamation property \cite{KoSh} but is also used to prove categoricity transfer results (e.g. \cite{GV2, Sh394}).
Additionally, Shelah and Villaveces' work inspired several papers examining the uniqueness of limit models in non-categorical classes as a step to develop a classification theory for non-elementary classes 
\cite{GVV, Za, ViZa2, Dr, GB, Va-sym, Va-union}.
Despite this,  the main result stated in \cite{ShVi} remains open:
\begin{claim}[The main claim, Theorem 3.37, of \cite{ShVi}]\label{main claim}
Let $\mu$ and $\lambda$ be cardinals so that  $\LS(\K)\leq\mu<\lambda$.
Suppose that GCH 
and  $\Phi_{\mu^+}(S^{\mu^+}_{\cf(\mu)})$  hold.

If $\K$ is $\lambda$-categorical and has no maximal models, then if $M$ and $M'$ are limit models of cardinality $\mu$ over $M_0$, then $M$ and $M'$ are isomorphic over $M_0$.

\end{claim}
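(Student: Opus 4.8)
The plan is to follow the route laid out by Shelah and Villaveces in \cite{ShVi}: reduce Claim \ref{main claim} to the saturation of a $(\mu,\omega)$-limit model, and attack that via the tower machinery. First, by the Remark preceding Claim \ref{main claim} it suffices to compare a $(\mu,\theta_1)$-limit $M$ over $M_0$ with a $(\mu,\theta_2)$-limit $M'$ over $M_0$ when $\cf(\theta_1)\neq\cf(\theta_2)$, and for this it is enough to show that every limit model of cardinality $\mu$ is saturated over its base: saturated models of cardinality $\mu$ containing $M_0$ are unique over $M_0$ by a routine back-and-forth, using that $M_0$ is an amalgamation base. A $(\mu,\mu^+)$-limit over $M_0$ is saturated over $M_0$ already (this is why it can play the role of $\C$), so by the back-and-forth between limit models of the same cofinality the whole statement follows once we know that a $(\mu,\omega)$-limit over $M_0$ is saturated over $M_0$ --- equivalently, that the union of an increasing chain $\langle M_i\in\K_\mu\mid i<\omega\rangle$ of saturated models is saturated.

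To prove this I would introduce towers: triples $(\bar M,\bar a,\bar N)=\langle M_i,a_i,N_i\mid i<\alpha\rangle$ with the $M_i\in\K_\mu$ limit models increasing in $\K$ and each $M_{i+1}$ universal over $M_i$, $a_i\in M_{i+1}$, and $N_i\prec_{\K}M_i$ a limit model over which $\tp(a_i/M_i)$ does not $\mu$-split. After fixing the ordering on towers and the notions of \emph{reduced} tower and of suitably dense (``relatively full'') tower, the programme is: (i) every tower extends to a reduced, relatively full tower of length $\omega$; (ii) reduced towers are continuous; (iii) the union of a continuous, relatively full tower of length $\omega$ realizes every Galois type over each $M_i$ and is therefore saturated; and then (iv) since the union in (iii) is a $(\mu,\omega)$-limit over $M_0$ and any two $(\mu,\omega)$-limits over $M_0$ are isomorphic over $M_0$, every $(\mu,\omega)$-limit over $M_0$ is saturated over $M_0$, closing the loop. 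Step (ii) is the delicate one --- it is exactly the point where \cite{Va-errata} located an error in the original argument --- and I would recover it by first establishing $\mu$-symmetry of $\mu$-splitting, adapting \cite{Va-sym}, and then deducing continuity of reduced towers from $\mu$-symmetry, adapting \cite{Va-union}; this is carried out in Section \ref{sec:symmetry}.

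The main obstacle, and the reason Claim \ref{main claim} as stated remains open, sits underneath all of (i)--(iii): each tower construction (extending a tower, amalgamating two towers over a common initial segment, taking direct limits along the tower ordering) produces a directed system whose direct limit must be shown to embed into the ambient model $\C$. Fact \ref{direct limit lemma} provides this, but only when $\Union_{i<\theta}N_i$ is an amalgamation base; Shelah and Villaveces tacitly invoked the full amalgamation property here, which is unavailable in this setting, and without it there is no known mechanism forcing these unions of limit models to be amalgamation bases. Under the additional hypothesis that the union of an increasing and continuous chain of limit models of cardinality $\mu$ is an amalgamation base (item \ref{assumption item} of Theorem \ref{necessary theorem}), the obstruction disappears and the argument above goes through, yielding \ref{assumption item}$\Rightarrow$\ref{limit item}; Theorem \ref{necessary theorem} further shows this hypothesis is necessary for $\lambda=\mu^{+n}$. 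Absent that hypothesis, a full proof of Claim \ref{main claim} would seem to require either deriving this amalgamation-base property of unions of limit models directly from categoricity, or a genuinely different approach that sidesteps direct limits of towers.
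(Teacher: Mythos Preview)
Your diagnosis of the global situation matches the paper exactly: Claim \ref{main claim} is stated there as still open, the tower machinery is the intended route, continuity of reduced towers is the fragile step (to be recovered via $\mu$-symmetry as in Section \ref{sec:symmetry}), and the missing hypothesis is precisely Assumption \ref{new assumption}. On that level you and the paper agree, and your third paragraph is an accurate account of the obstruction.

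Where you diverge is the reduction in your first paragraph. The paper does \emph{not} reduce to saturation: its plan (Section \ref{sec:structure of proof}, Figure \ref{fig:array}) is to build a $(\theta_1+1)\times(\theta_2+1)$ array of towers showing directly that a $(\mu,\theta_1)$-limit over $M_0$ is simultaneously a $(\mu,\theta_2)$-limit over $M_0$, and then quote the back-and-forth for limits of equal cofinality. Your route through ``every limit model of cardinality $\mu$ is saturated, and saturated models over $M_0$ are unique over $M_0$'' has two soft spots in this limited-amalgamation setting. First, $M_0$ has cardinality $\mu$, so $\mu$-saturation (realizing types over submodels of size $<\mu$) does not by itself control the isomorphism type over $M_0$; the ``routine back-and-forth'' you invoke is not routine here. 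Second, Section \ref{sec:preliminaries} is devoted precisely to the point that, without full amalgamation, saturated models need not be unique or even amalgamation bases unless they are additionally dense with smaller amalgamation bases; the paper introduces that density condition exactly because the back-and-forth you want requires it. Your claimed equivalence between ``a $(\mu,\omega)$-limit is saturated'' and ``unions of $\omega$-chains of saturated models are saturated'' is also not an equivalence as stated: the models in a witnessing chain for a $(\mu,\omega)$-limit are merely amalgamation bases with universal successors, not saturated. The array approach avoids all of this by never mentioning saturation in cardinality $\mu$; the relatively full tower at the bottom of the array already witnesses the target limit structure directly.
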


In this paper we continue the endeavor begun by Shelah and Villaveces which includes a long line of work spanning nearly twenty years: \cite{Va-thesis,Va,Va-errata,GVV, Va-sym, Va-union, VV-transfer}.  In many of these papers additional assumptions were added to derive the consequences of Claim \ref{main claim}.  
Here we identify an assumption that is not only sufficient, but is necessary, to prove a special case of Claim \ref{main claim}:
\begin{assumption}\label{new assumption}\footnote{This is not a global assumption in the paper.  It will be explicitly stated when used.  It is a restatement of \ref{assumption item} of Theorem \ref{necessary theorem}.}
The union of an increasing and continuous chain of limit models $\langle M_i\in\K_\mu\mid i<\alpha<\mu^+\rangle$ is an amalgamation base.
\end{assumption}

\begin{corollary}
 Assumption \ref{new assumption} is  necessary and sufficient to prove Claim \ref{main claim} when $\lambda=\mu^{+n}$ where $0<n<\omega$.
\end{corollary}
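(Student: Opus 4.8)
The plan is to read the Corollary off directly from Theorem \ref{necessary theorem}, since statement \ref{limit item} of that theorem is verbatim the conclusion of Claim \ref{main claim}, and statement \ref{assumption item} is exactly Assumption \ref{new assumption}. So, working under the hypotheses of Theorem \ref{necessary theorem} (in particular $\mu=\kappa^+$ with $\LS(\K)\leq\kappa$, GCH, the relevant instances of $\Phi$, and $\K$ $\lambda$-categorical with no maximal models for $\lambda=\mu^{+n}$, $0<n<\omega$), the equivalence $\ref{assumption item}\Leftrightarrow\ref{limit item}$ supplied by that theorem is precisely the assertion that Assumption \ref{new assumption} is both sufficient and necessary to establish this instance of Claim \ref{main claim}.

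Concretely, I would split the argument into the two implications already contained in Theorem \ref{necessary theorem}. For \emph{sufficiency}: assume Assumption \ref{new assumption}, i.e.\ \ref{assumption item}; by the implication $\ref{assumption item}\Rightarrow\ref{limit item}$ we obtain \ref{limit item}, which is the conclusion of Claim \ref{main claim} for $\lambda=\mu^{+n}$. For \emph{necessity}: suppose Claim \ref{main claim} holds for $\lambda=\mu^{+n}$, so \ref{limit item} holds; then the implication $\ref{limit item}\Rightarrow\ref{assumption item}$ yields Assumption \ref{new assumption}. Hence no proof of this case of Claim \ref{main claim} can bypass Assumption \ref{new assumption}. (One could equally phrase the statement using \ref{limit not over item} or \ref{union item} in place of \ref{limit item}, since all four items of Theorem \ref{necessary theorem} are equivalent; I would not belabor this.)

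The only genuine point requiring care — and the place where I expect to spend a sentence rather than any real work — is the bookkeeping of hypotheses: Claim \ref{main claim} is stated for an arbitrary cardinal $\mu$ with $\LS(\K)\leq\mu<\lambda$ and assuming only the single instance $\Phi_{\mu^+}(S^{\mu^+}_{\cf(\mu)})$, whereas Theorem \ref{necessary theorem} restricts to $\mu$ a successor cardinal $\kappa^+$ above $\LS(\K)$ and invokes $\Phi_{\chi^+}(S^{\chi^+}_{\cf(\chi)})$ for all $\chi$ with $\kappa\leq\chi<\lambda$. Thus the Corollary is to be understood as the case $\mu=\kappa^+$ of Claim \ref{main claim} under the ambient set-theoretic hypotheses of Theorem \ref{necessary theorem}; with that reading there is nothing further to verify, and all the substantive content — the $\mu$-symmetry and symmetry-transfer arguments adapted from \cite{Va-sym,Va-union} and the direct-limit construction of Fact \ref{direct limit lemma} — is already packaged inside Theorem \ref{necessary theorem}.
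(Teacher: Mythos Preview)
Your proposal is correct and matches the paper's approach: the paper gives no separate proof of this corollary, treating it as an immediate consequence of the equivalence \ref{assumption item}$\Leftrightarrow$\ref{limit item} in Theorem \ref{necessary theorem}. Your remark about the hypothesis bookkeeping (successor $\mu$, the extra $\Phi$ instances) is apt and is exactly the reading the paper intends.
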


We stated Theorem \ref{necessary theorem} using the set-theoretic assumptions of \cite{ShVi}, plus additional instances of the weak diamond that are needed to work with limit models of different cardinalities.  However,  these set-theoretic assumptions can be  replaced with model-theoretic assumptions and/or eliminated:

\begin{remark}\label{GCH remark}
The assumptions of GCH and $\Phi_{\chi^+}(S^{\chi^+}_{\cf(\chi)})$ in Theorem \ref{necessary theorem} are used in three places:  
\begin{itemize}
\item GCH is in the proof of superstability \cite[Theorem 2.2.1]{ShVi} which we describe how to eliminate in Remark \ref{ss remark}.  
\item Another use of GCH is to get limit models of each cardinality.  
But if we do not have limit models of cardinality $\mu^+$, then the statement of the theorem is vacuously true.  The subtle point where we still use GCH is that if the theorem isn't vacuously true because we have limit models of cardinality $\mu^+$, then
we will still need to use limit models of cardinality $\mu$ to prove the theorem.  And, without assuming the full amalgamation property, it is unknown if $\mu$ stability and the existence of limit models of cardinality $\mu^+$ are enough to imply $\mu$ stability or the density of limit models of cardinality $\mu$.

\item
Finally,  the diamond-like property, $\Phi_{\chi^+}(S^{\chi^+}_{\cf(\chi)})$,  is used to show that limit models of cardinality $\chi$ are amalgamation bases.  While the conclusion of the theorem only involves models of cardinality $\mu=\kappa^+$, the proofs employ limit models of cardinality $\kappa$ and models of cardinality larger than $\mu$ but smaller than $\lambda$.  
\end{itemize}
\end{remark}

To prove Claim \ref{main claim} we show that for every pair of limit ordinals $\theta_1,\theta_2<\mu^+$, every 
$(\mu,\theta_1)$-model $M$ over $M_0$ can be written as a $(\mu,\theta_2)$ over $M_0$.  We outline the construction here, but more details on this  construction can be found in \cite{Va} and \cite{GVV}.  The idea is to build 
 an increasing and continuous array of models with $(\theta_1+1)$-rows and $(\theta_2+1)$-columns.  The $(\theta_1+1)^{st}$-row will be constructed to be relatively full (see Definition II.6.6 of \cite{Va}) and the union of  this relatively full sequence of models is a $(\mu,\theta_2)$-limit model.  We will also construct the array so that if $M^j_i$ is the model in the $j^{th}$ row and $\beta^{th}$ column of the array, then $M^{j+1}_\beta$ will be universal over $M^j_\beta$.  This will witness that the union of the last column of the array is a $(\mu,\theta_2)$-limit model.  See Figure \ref{fig:array}.

\begin{figure}[h]
\begin{tikzpicture}[scale =2.9,inner sep=.5mm]
%\draw (0,1.5) rectangle (.75,.5);
%\draw (0,1.5) rectangle (1.75,1);
%\draw (.25,.75) node {$N_0$};
%\draw (1.25,1.25) node {$N_i$};
\draw[rounded corners=5mm] (0,0) rectangle (3.8,.5);
\draw[rounded corners=5mm] (0,.5) rectangle (3.8,-1.75);
\draw (.85,.25) node {$M_0$};
\draw(1.25,.25) node {$M_1$};
\draw (1.75,.25) node {$\dots M_i$};
\draw (2.35,.25) node {$M_{i+1}$};
\draw (3.15,.2) node {$\dots M^0_{\theta_1}=\displaystyle{\Union_{k<\theta_1}M_k}$};
%\draw (3.85, .25) node {$M_\delta$};
%\draw (-.5,.25) node {$(\bar M,\bar a,\bar N)$};
\draw[rounded corners=5mm] (0,.5) rectangle (3.8, -.4);
\draw (.85,-.15) node {$M^{1}_0$};
\draw (1.75,-.15) node {$\dots M^{1}_\beta$};
\draw (2.35,-.15) node {$M^{1}_{\beta+1}$};
\draw(1.25,-.15) node {$M^{1}_1$};
\draw(1.25,-.85) node {$\dots$};
\draw(1.25,-1.5) node {$\dots$};
\draw(1.25,-2) node {$\dots$};
\draw (3.15,-.2) node {$\dots  M^1_{\theta_1}=\displaystyle{\Union_{\gamma<\theta_1}M^{1}_\gamma}$};
%\draw (-.5,-.15) node {$M^1_i\prec^{u}M^0_i$};
\draw (.8,-.5) node {$\univ$};
\draw (1.8,-.5) node {$ \univ$};
\draw (2.3,-.5) node {$\univ$};
\draw (3.15,-.5) node {$\univ$};
%\draw (.85,-.6) node {$\vdots$};
%\draw (1.75,-.6) node {$\vdots$};
%\draw (2.35,-.6) node {$\vdots$};
%\draw (3.2,-.6) node {$\vdots$};
\draw[rounded corners=5mm] (0,.5) rectangle (3.8, -1);
\draw (.85,-.85) node {$M^{j}_0$};
\draw (1.75,-.85) node {$M^{j}_\beta$};
\draw (2.35,-.85) node {$M^{j}_{\beta+1}$};
\draw (3.15,-.8) node {$\dots  M^j_{\theta_1}=\displaystyle{\Union_{\gamma<\theta_1}M^{j}_\gamma}$};
%\draw (-.5,-.85) node {$(\bar M,\bar a,\bar N)^j$};
%\draw (0,.5) rectangle (3.8, -1.35);
\draw[rounded corners=5mm] (0,.5) rectangle (1,-2.5);
\draw[rounded corners=5mm](0,.5) rectangle (1.5, -2.5);
\draw[rounded corners=5mm] (0,.5) rectangle (2.5, -2.5);
\draw[rounded corners=5mm] (0,.5) rectangle (2,-2.5);
\draw[rounded corners=5mm] (0,.5) rectangle (3.8, -2.5);
\draw (.8,-1.15) node {$\univ$};
\draw (.8,-1.5) node {$M^{j+1}_0$};
\draw (1.8,-1.15) node {$ \univ$};
\draw (1.8,-1.5) node {$ M^{j+1}_\beta$};
\draw (2.3,-1.15) node {$\univ$};
\draw (2.3,-1.5) node {$M^{j+1}_{\beta+1}$};
\draw (3.15,-1.15) node {$\univ$};
\draw (3.15,-1.55) node {$\dots  M^{j+1}_{\theta_1}=\displaystyle{\Union_{\gamma<\theta_1}M^{j+1}_\gamma}$};
%\draw (-.5,-1.15) node {$M^{j+1}_i\univ M^j_i$};
\draw (.85,-2) node {$\vdots$};
\draw (1.75,-2) node {$\vdots$};
\draw (2.35,-2) node {$\vdots$};
\draw (3.2,-2) node {$\vdots$};
%\node at (3.75,.75)[circle, fill, draw, label=90:$b$] {};
%\node at (2.25,.75)[circle, fill, draw, label=290:$a_i$] {};
%\node at (1.1,.75)[circle, fill, draw, label=290:$a_1$] {};
\draw (3.15,-2.25) node {$\displaystyle{\Union_{\gamma<\theta_1, i<\theta_2}M^i_\gamma=M^{\theta_2}_{\theta_1}}$};
\tikzset{
    position label/.style={
       below = 13pt,
       text height = 1.5ex,
       text depth = 1ex
    },
   brace/.style={
     decoration={brace, mirror,amplitude=10pt},
     decorate
   }
}
\node [position label] (cStart) at (0,-2.3){};
\node [position label] (cA) at (3.8,-2.3) {};
\draw [brace] (cStart.south) -- node [position label, pos=0.5] {Continuous relatively full tower of length $\theta_1+1$} (cA.south);
%\draw [brace,decoration={raise=4ex}] (0,-2.5) -- node [position label,yshift=-4ex] {Second}  (3.8,-2.5);
\end{tikzpicture}
\caption{The array of models demonstrating a $(\mu,\theta_1)$-limit model which is also a $(\mu,\theta_2)$-limit model.  The notation $M\prec^{u}N$ represents the statement that $M$ is universal over $N$.} \label{fig:array}
\end{figure}

We will view each row of the array as a tower.   A \emph{tower} is a sequence of length $\alpha$ of amalgamation bases (specifically limit models), denoted by $\bar M=\langle M_i\in\K_\mu\mid i<\alpha\rangle$, along with a sequence of designated elements $\bar a=\langle a_{i}\in M_{i+1}\backslash M_i\mid i+1<\alpha\rangle$, and a sequence of designated submodels $\bar N=\langle N_{i}\mid i+1<\alpha\rangle$ for which
 $M_i\prec_{\K}M_{i+1}$, $\tp(a_i/M_i)$ does not $\mu$-split over $N_i$, and $M_i$ is universal over $N_i$ (see Definition I.5.1 of \cite{Va}).  The class of all towers indexed by $\alpha$ containing models of cardinality $\mu$ is denoted by $\K^*_{\mu,\alpha}$.  When working with towers, 
we will use the notation $\T=(\bar M,\bar a,\bar N)\in\K^*_{\mu,\alpha}$ for towers of length $\alpha$  and other abbreviations from \cite{Va-sym} such as $(\bar M,\bar a,\bar N)\restriction\beta\in\K^*_{\mu,\beta}$ for the restriction of the tower $(\bar M,\bar a,\bar N)$ to index set $\beta$.   

 Notice that the sequence $\bar M$ in the definition of a tower is not required to be continuous.  In fact, many times we will not have continuous towers. It is exactly at the indices witnessing discontinuity that we might have a model that is not an amalgamation base over which we will need to amalgamate two extensions.   Also for $\alpha$ a limit ordinal, a continuous tower $\T\in\K^*_{\mu,\alpha}$ may still cause us issues if  the top of the tower, $\Union_{i<\alpha}M_i$, is not an amalgamation base.  To avoid these problems we will restrict ourselves to nice or amalgamable towers.  A tower   $\T\in\K^*_{\mu,\alpha}$ is \emph{nice} if for every limit $\beta<\alpha$, $\Union_{j<\beta}M_j$ is an amalgamation base.   
 A  tower $\T\in\K^*_{\mu,\alpha}$ is \emph{amalgamable} if it is nice and $\Union_{\gamma<\alpha}M_\gamma$ is an amalgamation base.  Trivially, under the assumption that limit models are amalgamation bases, continuous towers are nice, but they may not be amalgamable.  Also notice that under Assumption \ref{new assumption}, all towers are nice and amalgamable.

To make sure that in a given column the model in the $(i+1)^{st}$-row is universal over the model in the $i^{th}$-row, we consider the following definition of tower extensions:
\begin{definition}[Definition 3.6.3 of \cite{ShVi}]
For towers $(\bar M,\bar a,\bar N)$ and $(\bar M',\bar a',\bar N')$ in $\K^*_{\mu,\alpha}$, we say $$(\bar M,\bar a,\bar N)\leq (\bar M',\bar a',\bar N')$$ if $\bar a=\bar a'$, $\bar N=\bar N'$, $M_\beta\preceq_{\K}M'_\beta$, and whenever $M'_\beta$ is a proper extension of $M_\beta$, then $M'_\beta$ is universal over $M_\beta$.  If for each $\beta<\alpha$,  $M'_\beta $ is universal over $M_\beta$ we will write $(\bar M,\bar a,\bar N)< (\bar M',\bar a',\bar N')$.  We say that $\K^*_{\mu,\alpha}$ has the \emph{extension property} if  every $(\bar M,\bar a,\bar N)\in\K^*_{\mu,\alpha}$ has a $<$-extension in $\K^*_{\mu,\alpha}$.\end{definition}

In \cite{Va-thesis}, we 
notice that in order to get the extension property for towers, the argument outlined in \cite{ShVi} did not seem to converge,
but that a direct limit construction was sufficient.  In order to carry out the direct limit construction, however, we need to restrict ourselves to amalgamable towers \cite{Va}.  
\begin{fact}[Corollary III.10.6 of \cite{Va}]\label{extension for towers}
Under Assumption \ref{new assumption} and the context of Theorem \ref{necessary theorem}, for every amalgamable $\T\in\K^*_{\mu,\alpha}$ there exists $\T'\in\K^*_{\mu,\alpha}$ so that $\T<\T'$.
\end{fact}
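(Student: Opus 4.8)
The plan is to prove this extension property for amalgamable towers by induction on the length $\alpha$ of $\T=(\bar M,\bar a,\bar N)$, producing at each stage a tower $\T'$ all of whose models lie inside the fixed saturated model $\C$ and with $M'_i$ universal over $M_i$ for every $i<\alpha$; under Assumption \ref{new assumption} every tower is automatically nice and amalgamable, so that is not an extra bookkeeping burden. The base case $\alpha=1$ is immediate: by Fact \ref{limits are ab} a $(\mu,\omega)$-limit model over $M_0$ built inside $\C$ is an amalgamation base and is universal over $M_0$. The tools used throughout are Fact \ref{limits are ab} (to manufacture, inside $\C$, a universal amalgamation-base extension of any given amalgamation base), the extension and uniqueness properties of $\mu$-splitting recalled above (to pin the type of a designated element $a_\delta$ over a larger model down as the \emph{unique} extension of $\tp(a_\delta/M_\delta)$ that does not $\mu$-split over $N_\delta$), model-homogeneity of $\C$ over amalgamation bases (to realize abstract amalgams inside $\C$ and to extend isomorphisms between amalgamation bases of size $\mu$ to members of $\Aut(\C)$), and Fact \ref{direct limit lemma} at limit indices.

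At a successor stage $\alpha=\beta+1$ one applies the induction hypothesis to $\T\restriction\beta$ to obtain $\T^-$ of length $\beta$ with $\T\restriction\beta<\T^-$ and all models $M^-_i\prec_{\K}\C$ universal over $M_i$, and then adjoins a top model. When $\beta=\delta+1$, the uniqueness of non-$\mu$-splitting extensions produces the unique $p'\in\gaS(M^-_\delta)$ extending $\tp(a_\delta/M_\delta)$ that does not $\mu$-split over $N_\delta$; since $p'\restriction M_\delta=\tp(a_\delta/M_\delta)$, a realization of $p'$ is conjugate to $a_\delta$ over $M_\delta$ by some $g\in\Aut_{M_\delta}(\C)$. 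The point that makes this harmless is that such a $g$ automatically fixes $M_i$, $N_i$, and $a_i$ for every $i\le\delta$ — all of these lie inside $M_\delta$ — so replacing each $M^-_i$ by $g^{-1}(M^-_i)$ for $i\le\delta$ yields a tower still lying above $\T\restriction\beta$ but now with $\tp(a_\delta/g^{-1}(M^-_\delta))$ not $\mu$-splitting over $N_\delta$; a $(\mu,\omega)$-limit model built inside $\C$ over an amalgam of $M_\beta$ with $g^{-1}(M^-_\delta)$ over the amalgamation base $M_\delta$ then serves as $M'_\beta$. When $\beta$ is a limit ordinal there is no new designated element, and one only amalgamates $M_\beta$ with $\Union_{i<\beta}M^-_i$ over $\Union_{i<\beta}M_i$.

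The genuine difficulty, and where I expect the real work to lie, is that the three clauses defining a $<$-extension pull against one another: $\bar a'=\bar a$ and $\bar N'=\bar N$ must hold \emph{literally}, while every coordinate must become a \emph{proper} universal extension and the $M'_i$ must form an honest $\prec_{\K}$-chain inside $\C$. A naive coordinate-by-coordinate construction breaks down, since the automorphism of $\C$ used to repair the type of $a_\delta$ fixes only the \emph{original} models $M_i$ $(i\le\delta)$ and not the already-chosen extended models, and, symmetrically, realizing an abstract amalgam inside $\C$ can keep only one of its two factors literally fixed. The way around this — carried out in Theorem III.10.1 of \cite{Va} — is to maintain not an honest $\prec_{\K}$-chain but a directed system of amalgamation bases inside $\C$ whose transition maps (each extendible to an automorphism of $\C$) absorb exactly these forced automorphism corrections, with the original models $M_i$ serving as the coherent family $N_i\prec_{\K}M^\sharp_i$, $f_{i,i+1}\restriction N_i=\id_{N_i}$; at limit indices one invokes Fact \ref{direct limit lemma}, whose conclusions $\Union_{i<\theta}N_i\preceq_{\K}M^*$ and $f_{i,\theta}\restriction N_i=\id_{N_i}$ are precisely what make the limiting tower literally extend $\T$. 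The crux is this limit step: Fact \ref{direct limit lemma} yields a direct limit \emph{inside $\C$} only because the unions $\Union_{i<\beta}M_i$ are amalgamation bases — which is exactly what Assumption \ref{new assumption} secures — and this is the very point at which Shelah and Villaveces' construction failed to converge and which the amalgamability hypothesis repairs.
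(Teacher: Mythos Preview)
Your proposal is correct and matches the approach of the cited reference. The paper itself does not prove this statement---it is stated as a Fact with a citation to Corollary III.10.6 of \cite{Va}---but the surrounding discussion confirms that the proof there proceeds exactly as you describe: a directed system of amalgamation bases inside $\C$ whose transition maps fix the original models $M_i$, with Fact \ref{direct limit lemma} supplying the limit step, and Assumption \ref{new assumption} guaranteeing that $\Union_{i<\beta}M_i$ is an amalgamation base so that Fact \ref{direct limit lemma} applies. You have also correctly identified the reason the naive induction fails (the automorphism correcting $\tp(a_\delta/M'_\delta)$ disturbs the earlier extended models, forcing one to carry a directed system rather than a chain) and the precise role of the amalgamability hypothesis.
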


Assumption \ref{new assumption} will give us the extension property for towers, but in order to complete the construction depicted in Figure \ref{fig:array} we will need to produce continuous extensions.  In particular we will need that the lower model in the figure $M^{\theta_2}_{\theta_1}$ is the union of the last row of the tower.  To get continuous extensions we will look at reduced towers.

\begin{definition}\label{reduced defn}\index{reduced towers}
A tower $(\bar M,\bar a,\bar N)\in\K^*_{\mu,\alpha}$ is said to 
be \emph{reduced} provided that for every $(\bar M',\bar a,\bar
N)\in\K^*_{\mu,\alpha}$ with
$(\bar M,\bar a,\bar N)\leq(\bar M',\bar a,\bar
N)$ we have that for every
$\beta<\alpha$,
$$(*)_\beta\quad M'_\beta\cap\Union_{\gamma<\alpha}M_\gamma = M_\beta.$$
\end{definition}

Once we have the extension property for towers (Fact \ref{extension for towers}) we are able to produce reduced towers using Fact \ref{direct limit lemma}:
\begin{fact}[Fact III.11.3 of \cite{Va}]\label{density of reduced}
Under  the context of Theorem \ref{necessary theorem}, for every amalgamable $\T\in\K^*_{\mu,\alpha}$ in $\C$ there exists $\T'\in\K^*_{\mu,\alpha}$ a reduced extension of $\T$ in $\C$.
\end{fact}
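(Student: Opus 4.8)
The plan is to prove density of reduced towers by iterating the extension property $\mu^+$-many times and extracting a reduced tower from the union of the resulting increasing chain. First I would fix an amalgamable tower $\T=(\bar M,\bar a,\bar N)\in\K^*_{\mu,\alpha}$ inside $\C$. Using Fact \ref{extension for towers} repeatedly, I would build an increasing and continuous sequence $\langle \T^\eta=(\bar M^\eta,\bar a,\bar N)\mid \eta<\mu^+\rangle$ of towers with $\T^0=\T$, with $\T^\eta<\T^{\eta+1}$ for all $\eta$, and with $\T^\delta$ the coordinatewise union at limit $\delta$. The key coherence point is that the index-by-index unions $M^\delta_\beta=\Union_{\eta<\delta}M^\eta_\beta$ are limit models (hence amalgamation bases) and that the resulting tower $\T^\delta$ is again amalgamable and lies in $\C$; here I would invoke Fact \ref{direct limit lemma}, taking the directed system to be the chain in a fixed column together with the identity maps, and using that each $M^\eta_\beta$ is an amalgamation base with the designated submodels $\bar N$ and the elements $\bar a$ carried along unchanged. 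That the top $\Union_{\gamma<\alpha}M^\delta_\gamma$ of $\T^\delta$ is an amalgamation base (so that $\T^\delta$ is amalgamable and the iteration can continue) follows either from Assumption \ref{new assumption} — since each $M^\delta_\gamma$ is a limit model and the sequence can be arranged to be continuous — or, in the argument as in \cite{Va}, from the niceness/amalgamability being preserved by the direct limit construction of Fact \ref{direct limit lemma}.

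Next I would set $\T^*=(\bar M^*,\bar a,\bar N)$ with $M^*_\beta=\Union_{\eta<\mu^+}M^\eta_\beta$ for each $\beta<\alpha$, and claim that $\T^*$ is a reduced extension of $\T$. That $\T\leq\T^*$ is immediate from the construction, and $\T^*$ is amalgamable by the same direct-limit reasoning applied at cofinality $\mu^+$ (or, more simply, because each column stabilizes: $\alpha<\mu^+$ and each $M^\eta_\beta$ has size $\mu$, so a counting/pigeonhole argument over the $\mu^+$ stages handles any bounded data). To verify reducedness, suppose toward a contradiction that $\T^*\leq\T'=(\bar M',\bar a,\bar N)$ for some tower $\T'$ but $(*)_\beta$ fails for some $\beta$, i.e. there is $c\in M'_\beta\cap\Union_{\gamma<\alpha}M^*_\gamma$ with $c\notin M^*_\beta$. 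Then $c\in M^*_\gamma$ for some $\gamma>\beta$, hence $c\in M^\eta_\gamma$ for some $\eta<\mu^+$. The point is that $c$ "should have been" absorbed into coordinate $\beta$ at some later stage: since $M^{\eta+1}_\beta$ is universal over $M^\eta_\beta$ and contains a copy of the relevant extension, one can run the amalgamation argument that produced $\T^{\eta+1}$ so as to realize over $M^\eta_\beta$ the type of $c$ inside $M^\eta_\gamma$, forcing $c\in M^{\eta+1}_\beta\subseteq M^*_\beta$, a contradiction. Making this last step precise is exactly where the extension property (Fact \ref{extension for towers}) is used in its full strength, together with the uniqueness and extension properties of $\mu$-splitting recalled above.

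The main obstacle is this reducedness verification — arranging the $\mu^+$-iteration so that every element of a later coordinate that is $\K$-generated over $M^\eta_\beta$ by the tower data is caught in coordinate $\beta$ at a successor stage. Concretely, at stage $\eta$ one must choose the $<$-extension $\T^{\eta+1}$ of $\T^\eta$ not arbitrarily but so as to absorb a prescribed "obligation'': given a candidate witness $c$ from $\Union_{\gamma<\alpha}M^\eta_\gamma$ lying outside $M^\eta_\beta$, extend $M^\eta_\beta$ to a universal $M^{\eta+1}_\beta$ that contains a preimage of $c$ under an appropriate amalgamating embedding. A bookkeeping function on $\mu^+$ enumerating all such pairs $(\beta,c)$ — there are at most $\mu$ per stage since $|\Union_\gamma M^\eta_\gamma|=\mu$ and $\alpha\le\mu$, so $\mu^+\cdot\mu=\mu^+$ suffices — ensures every obligation is eventually met. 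The one genuinely delicate ingredient, which in this amalgamation-free setting is handled by Fact \ref{direct limit lemma}, is that each of the $\mu^+$ direct limits along a column stays inside $\C$ and remains an amalgamation base; granting that, the argument is the standard one from Fact III.11.3 of \cite{Va} transported to the present context, and I would simply cite that proof with the observation that its only use of amalgamation is through Facts \ref{extension for towers} and \ref{direct limit lemma}, both available here.
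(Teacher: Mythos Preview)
Your construction has a cardinality error that breaks the argument. You build a strictly $<$-increasing chain $\langle \T^\eta\mid\eta<\mu^+\rangle$ and then set $M^*_\beta=\bigcup_{\eta<\mu^+}M^\eta_\beta$. Since $\T^\eta<\T^{\eta+1}$ forces $M^{\eta+1}_\beta$ to be a proper (indeed universal) extension of $M^\eta_\beta$, each column is a strictly increasing $\mu^+$-chain of models of size $\mu$, so $|M^*_\beta|=\mu^+$. Thus $\T^*\notin\K^*_{\mu,\alpha}$ and cannot serve as the desired reduced extension. Your parenthetical escape hatch, that ``each column stabilizes,'' is simply false under the $<$-ordering you chose. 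The reducedness verification is also flawed: arranging that $M^{\eta+1}_\beta$ \emph{realizes the type} of some $c$ over $M^\eta_\beta$ does not put the element $c$ itself into $M^{\eta+1}_\beta$; reducedness is about membership of specific elements, not of realizations.

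The proof in \cite{Va} (Fact III.11.3) proceeds by contradiction and never forms a tower at stage $\mu^+$. Assume $\T$ has no reduced extension and build a $\leq$-increasing continuous chain $\langle\T^i\mid i<\mu^+\rangle$ where at each successor step $\T^{i+1}$ is chosen, via the \emph{definition} of non-reduced (not Fact~\ref{extension for towers}), together with a witness $(c_i,\beta_i)$ satisfying $c_i\in M^{i+1}_{\beta_i}\cap\bigcup_\gamma M^i_\gamma$ and $c_i\notin M^i_{\beta_i}$. The map $i\mapsto(c_i,\beta_i)$ is injective: if $i<j$ and $(c_i,\beta_i)=(c_j,\beta_j)$ then $c_j=c_i\in M^{i+1}_{\beta_i}\subseteq M^j_{\beta_j}$, contradiction. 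For limit $i$, continuity gives $c_i\in\bigcup_\gamma M^{g(i)}_\gamma$ for some $g(i)<i$; by Fodor's lemma there is a stationary $S\subseteq\mu^+$ and a fixed $j^*$ with $c_i\in\bigcup_\gamma M^{j^*}_\gamma$ for all $i\in S$. But then $i\mapsto(c_i,\beta_i)$ maps the $\mu^+$-sized set $S$ injectively into $\bigl(\bigcup_\gamma M^{j^*}_\gamma\bigr)\times\alpha$, a set of size $\mu$ --- the desired contradiction. The role of Fact~\ref{direct limit lemma} (and amalgamability) is only to ensure that the unions at limit stages below $\mu^+$ are again towers in $\K^*_{\mu,\alpha}$ inside $\C$.
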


\begin{fact}[Lemma III.11.5 of \cite{Va}]\label{monotonicity of towers}
Under Assumption \ref{new assumption} and the context of Theorem \ref{necessary theorem} if $\T\in\K^*_{\mu,\alpha}$ is reduced, then for every $\beta<\alpha$, $\T\restriction\beta$ is reduced.

\end{fact}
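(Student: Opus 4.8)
The plan is to prove the contrapositive: if $\T\restriction\beta$ fails to be reduced for some $\beta<\alpha$, then $\T$ itself is not reduced. So suppose $\T = (\bar M,\bar a,\bar N)\in\K^*_{\mu,\alpha}$ and there is $\beta<\alpha$ together with a tower $(\bar M',\bar a\restriction\beta,\bar N\restriction\beta)\in\K^*_{\mu,\beta}$ with $\T\restriction\beta \leq (\bar M',\bar a\restriction\beta,\bar N\restriction\beta)$ and some $\gamma<\beta$ for which $(*)_\gamma$ fails, i.e. $M'_\gamma\cap\Union_{\delta<\beta}M_\delta \supsetneq M_\gamma$. The goal is to extend this witnessing tower of length $\beta$ to a witnessing tower $(\bar M'',\bar a,\bar N)\in\K^*_{\mu,\alpha}$ of the full length $\alpha$ with $\T\leq(\bar M'',\bar a,\bar N)$, so that $M''_\gamma\cap\Union_{\delta<\alpha}M_\delta\supsetneq M_\gamma$ still holds, contradicting that $\T$ is reduced.

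First I would arrange the witnessing tower to sit over the appropriate models. Without loss of generality (replacing $\bar M'$ by an extension if needed, using the extension property for towers, Fact \ref{extension for towers}, applied to the relevant amalgamable towers) we may assume each $M'_\delta$ for $\delta<\beta$ lives inside $\C$ and in particular that $M'_\gamma$ genuinely contains an element of $\Union_{\delta<\beta}M_\delta$ not in $M_\gamma$; this is preserved under passing to extensions. Then I would build the tail $\langle M''_\delta\mid \beta\leq\delta<\alpha\rangle$. Set $M''_\delta = M'_\delta$ for $\delta<\beta$. At stage $\beta$, we need a model $M''_\beta\succeq_{\K}M'_{<\beta} := \Union_{\delta<\beta}M'_\delta$ (or its amalgamation base closure) which is also an extension of $M_\beta$, obtained by amalgamating $M_\beta$ and $M'_{<\beta}$ over $\Union_{\delta<\beta}M_\delta$; here we use that $\Union_{\delta<\beta}M_\delta$ is an amalgamation base, which holds because $\T$ is a tower (amalgamation bases at limit stages — niceness follows since in the context of Theorem \ref{necessary theorem} / under Assumption \ref{new assumption} all towers are nice). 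We arrange $M''_\beta$ to be universal over $M_\beta$. For $\delta$ with $\beta<\delta<\alpha$, define $M''_\delta$ by amalgamating $M''_{<\delta}$ with $M_\delta$ over $\Union_{\eta<\delta}M_\eta$ at limit $\delta$ (again an amalgamation base by niceness), and over $M_{\delta-1}$ at successor $\delta$, taking $M''_\delta$ universal over $M_\delta$; one must check $\tp(a_\delta/M''_\delta)$ does not $\mu$-split over $N_\delta$, which follows from the non-splitting of $\tp(a_\delta/M_\delta)$ over $N_\delta$ together with the extension property of $\mu$-splitting (the two Facts from \cite{Va} quoted in the excerpt) and the fact that $M_\delta$ is universal over $N_\delta$. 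This produces $(\bar M'',\bar a,\bar N)\in\K^*_{\mu,\alpha}$ with $\T\leq(\bar M'',\bar a,\bar N)$, and since $M''_\gamma = M'_\gamma$ the failure of $(*)_\gamma$ persists with $\Union_{\delta<\alpha}M_\delta\supseteq\Union_{\delta<\beta}M_\delta$. Hence $\T$ is not reduced.

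The main obstacle I anticipate is the bookkeeping at limit stages $\delta$ between $\beta$ and $\alpha$: one has to simultaneously keep $M''_\delta$ an extension of both the previously built chain and of $M_\delta$, keep it an amalgamation base (or limit model) so that the construction can continue and the result is a genuine tower, keep the designated submodels $N_\delta$ working (the non-$\mu$-splitting and universality-over-$N_\delta$ conditions), and maintain $M''_\delta$ universal over $M_\delta$ so that $\T\leq(\bar M'',\bar a,\bar N)$. The amalgamation at each limit stage is exactly where niceness of $\T$ (equivalently, that $\Union_{\eta<\delta}M_\eta$ is an amalgamation base, available in our context) is essential — this is the same phenomenon flagged around Fact \ref{direct limit lemma}. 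Everything else is a routine transfinite induction using the extension and uniqueness properties of $\mu$-splitting.
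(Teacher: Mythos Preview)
The paper does not give its own proof of this statement; it is quoted as a Fact from \cite{Va} (Lemma III.11.5 there). Your contrapositive strategy---take a witnessing $\leq$-extension of $\T\restriction\beta$ and lengthen it to a $\leq$-extension of the full $\T$---is exactly the standard argument, and it is correct in outline.

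One point deserves sharpening. When you write that ``$\tp(a_\delta/M''_\delta)$ does not $\mu$-split over $N_\delta$ \dots\ follows from the extension property of $\mu$-splitting,'' note that the extension property does not literally say the \emph{same} element $a_\delta$ has non-splitting type over the larger model; it produces $g\in\Aut_{M_\delta}(\C)$ with $\tp(g(a_\delta)/M''_\delta)$ non-$\mu$-splitting over $N_\delta$. Since $a_\delta$ is fixed data of the tower, one instead replaces $M''_\delta$ (and all previously built $M''_\eta$, $\eta<\delta$) by their images under $g^{-1}$. Because $g$ fixes $M_\delta$, this move preserves both the relations $M_\eta\preceq_{\K}M''_\eta$ and the witness to non-reducedness (which lives in $\Union_{\eta<\beta}M_\eta\subseteq M_\delta$). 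The upshot is that your ``inductive'' construction is really a directed system in which the earlier $M''_\eta$ are repeatedly conjugated; this is precisely the bookkeeping you anticipate, and at limits it is handled by Fact~\ref{direct limit lemma} under Assumption~\ref{new assumption}, just as you say. With that adjustment your argument goes through.
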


\begin{fact}[Theorem III.11.2 of \cite{Va}]\label{union of reduced is reduced}
Under Assumption \ref{new assumption} and the context of Theorem \ref{necessary theorem} for $\theta$ a limit ordinal $<\mu^+$, if $\langle \T^i\in\K^*_{\mu,\alpha}\mid i<\theta\rangle$ is an $<$-increasing chain of continuous and reduced towers, then the union of this chain of towers is a continuous and reduced tower in $\K^*_{\mu,\alpha}$.
\end{fact}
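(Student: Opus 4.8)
The plan is to let $\T^i=(\bar M^i,\bar a,\bar N)$ for $i<\theta$ (recall that $<$ fixes the last two coordinates, so $\bar a$ and $\bar N$ do not depend on $i$) and to set $\T^\theta:=(\bar M^\theta,\bar a,\bar N)$, where $M^\theta_\beta:=\Union_{i<\theta}M^i_\beta$ for each $\beta<\alpha$. Three things must be checked: that $\T^\theta\in\K^*_{\mu,\alpha}$, that $\T^\theta$ is continuous, and that $\T^\theta$ is reduced. I would treat them in that order, since the first carries essentially all of the genuine content and the other two then follow by routine manipulation.

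To see that $\T^\theta$ is a tower: each $M^\theta_\beta$ has cardinality $\mu$ because $\theta<\mu^+$; the sequence $\langle M^\theta_\beta\mid\beta<\alpha\rangle$ is $\prec_{\K}$-increasing by the axioms of an AEC; each designated element $a_\beta$ lies in $M^0_{\beta+1}\subseteq M^\theta_{\beta+1}$ and lies outside $M^\theta_\beta$ since $a_\beta\notin M^i_\beta$ for every $i<\theta$; and $M^\theta_\beta\supseteq M^0_\beta$ is universal over $N_\beta$. Since $\mu$-splitting of a type in $\gaS(M^\theta_\beta)$ is witnessed by submodels of cardinality $\mu$, hence inside a single $M^i_\beta$, and $\tp(a_\beta/M^i_\beta)$ does not $\mu$-split over $N_\beta$, we get that $\tp(a_\beta/M^\theta_\beta)$ does not $\mu$-split over $N_\beta$. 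The one point requiring real work is that each $M^\theta_\beta$ is an amalgamation base: here one observes that $\langle M^i_\beta\mid i<\theta\rangle$ is an increasing (though possibly not continuous) chain of limit models in which $M^{i+1}_\beta$ is universal over $M^i_\beta$ (from $\T^i<\T^{i+1}$), and that concatenating the witnessing chains of amalgamation bases of consecutive blocks — an induction on $\theta$, invoking Fact \ref{limits are ab} to know the intermediate unions are amalgamation bases, or, when the chain can be arranged continuous, a direct appeal to Assumption \ref{new assumption} — exhibits $M^\theta_\beta$ as a $(\mu,\cf(\theta))$-limit model, hence as an amalgamation base. This gives $\T^\theta\in\K^*_{\mu,\alpha}$.

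Continuity is then immediate: for $\beta<\alpha$ a limit ordinal, using continuity of each $\T^i$,
\[
M^\theta_\beta=\Union_{i<\theta}M^i_\beta=\Union_{i<\theta}\Union_{\gamma<\beta}M^i_\gamma=\Union_{\gamma<\beta}\Union_{i<\theta}M^i_\gamma=\Union_{\gamma<\beta}M^\theta_\gamma.
\]
For reducedness, first note $\T^i\leq\T^\theta$ for every $i<\theta$: the $\bar a,\bar N$ agree, $M^i_\beta\prec_{\K}M^\theta_\beta$, and wherever this extension is proper $M^\theta_\beta$ is universal over $M^i_\beta$ because it contains $M^{i+1}_\beta$, which is universal over $M^i_\beta$ (and since $\K$ has no maximal models ``universal over'' already forces a proper extension, so in fact $\T^i<\T^\theta$). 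Now fix $\T'=(\bar M',\bar a,\bar N)\geq\T^\theta$ in $\K^*_{\mu,\alpha}$ and $\beta<\alpha$. The inclusion $M^\theta_\beta\subseteq M'_\beta\cap\Union_{\gamma<\alpha}M^\theta_\gamma$ is clear; conversely take $c$ in the right-hand side, say $c\in M^\theta_{\gamma_0}$. If $\gamma_0\leq\beta$ then $c\in M^\theta_\beta$; otherwise pick $i<\theta$ with $c\in M^i_{\gamma_0}$. By transitivity of the extension ordering, $\T^i\leq\T^\theta\leq\T'$ gives $\T^i\leq\T'$, so reducedness of $\T^i$ yields $M'_\beta\cap\Union_{\gamma<\alpha}M^i_\gamma=M^i_\beta$; since $c\in M'_\beta$ and $c\in M^i_{\gamma_0}\subseteq\Union_{\gamma<\alpha}M^i_\gamma$, we conclude $c\in M^i_\beta\subseteq M^\theta_\beta$. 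Hence $\T^\theta$ is reduced.

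The step I expect to be the main obstacle is precisely the amalgamation-base claim for the union models $M^\theta_\beta$ — that a union of an increasing, not necessarily continuous, chain of limit models with each successor universal over its predecessor is again a limit model — since this is where Fact \ref{limits are ab} (equivalently, in the continuous case, Assumption \ref{new assumption}) does essential work; in the original treatment of the analogous statement in \cite{Va} most of the difficulty is exactly the niceness and amalgamability bookkeeping, which Assumption \ref{new assumption} streamlines because under it every tower is nice and amalgamable. Once that point is granted, continuity of $\T^\theta$ and the reduced conclusion follow formally, the latter from the transitivity of the tower ordering together with the hypothesis that each $\T^i$ is reduced.
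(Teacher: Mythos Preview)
The paper does not give its own proof of this statement: it is recorded as a Fact and attributed to Theorem III.11.2 of \cite{Va}, with no argument supplied in the present paper. So there is no in-paper proof to compare against.

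That said, your sketch is the natural one and matches the standard argument. The verification that $\T^\theta$ is continuous and the reducedness argument (via transitivity of $\leq$ and reducedness of each $\T^i$) are correct as written. Two small remarks on the tower-membership part. First, the non-splitting clause $\tp(a_\beta/M^\theta_\beta)$ does not $\mu$-split over $N_\beta$ presupposes that $M^\theta_\beta$ is an amalgamation base (otherwise the type is not well-defined), so you should establish the amalgamation-base claim before invoking it; you essentially acknowledge this ordering. Second, your argument that $M^\theta_\beta$ is a limit model is right in outline, but the chain $\langle M^i_\beta\mid i<\theta\rangle$ need not be continuous as given (the hypothesis says only that the chain of towers is $<$-increasing), so one really does need the inductive bookkeeping you allude to: at each limit $j<\theta$ the induction hypothesis makes $\Union_{i<j}M^i_\beta$ a limit model, and one then interleaves these intermediate unions with the given $M^i_\beta$'s to obtain a continuous witnessing chain. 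Under Assumption~\ref{new assumption} (and Fact~\ref{limits are ab}) this goes through, and indeed the paper remarks that under Assumption~\ref{new assumption} every tower is nice and amalgamable, which is exactly the streamlining you point to.
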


Reduced towers are important because they can be shown to be continuous.
However, one of the gaps in \cite{ShVi} was in the proof that reduced towers are continuous.  This was resolved in \cite{Va-errata} for towers in $\K^*_{\mu,\alpha}$ if one assumes that $\K$ is categorical in $\mu^+$.  Later fixes appear in \cite{GVV} and \cite{VV-transfer} where one assumes the amalgamation property and additional model-theoretic assumptions.  In this paper we show the approach in \cite{VV-transfer} can be applied in our context with limited amalgamation.  Underlying the fix in \cite{VV-transfer} is the additional assumption of $\mu$-symmetry.  We restate the definition here introducing the nuance of amalgamation bases:
\begin{definition}[Definition 3 of \cite{Va-sym}]\label{sym defn}
We say that an abstract elementary class  exhibits \emph{$\mu$-symmetry} if  whenever models $M,M_0,N\in\K_\mu$ and elements $a$ and $b$  satisfy the conditions \ref{limit sym cond}-\ref{last} below, then there exists  $M^b$  a limit model over $M_0$, containing $b$, so that $\tp(a/M^b)$ does not $\mu$-split over $N$.  
\begin{enumerate} 
\item\label{limit sym cond} $M$ is an amalgamation base and universal over $M_0$ and $M_0$ is a limit model over $N$.
\item\label{a cond}  $a\in M\backslash M_0$.
\item\label{a non-split} $\tp(a/M_0)$ is non-algebraic and does not $\mu$-split over $N$.
\item\label{last} $\tp(b/M)$ is non-algebraic and does not $\mu$-split over $M_0$. 
   
\end{enumerate}

\end{definition}

\begin{figure}[h]
\begin{tikzpicture}[rounded corners=5mm, scale=3,inner sep=.5mm]
\draw (0,1.25) rectangle (.75,.5);
\draw (.25,.75) node {$N$};
\draw (0,0) rectangle (3,1.25);
\draw (0,1.25) rectangle (1,0);
\draw (.85,.25) node {$M_0$};
\draw (3.2, .25) node {$M$};
\draw[color=gray] (0,1.25) rectangle (1.5, -.5);
\node at (1.1,-.25)[circle, fill, draw, label=45:$b$] {};
\node at (2,.75)[circle, fill, draw, label=45:$a$] {};
\draw[color=gray] (1.75,-.25) node {$M^{b}$};
\end{tikzpicture}
\caption{A diagram of the models and elements in the definition of symmetry. We assume the type $\tp(b/M)$ does not $\mu$-split over $M_0$ and $\tp(a/M_0)$ does not $\mu$-split over $N$.  Symmetry implies the existence of $M^b$ a limit model over $M_0$ containing $b$ so that $\tp(a/M^b)$  does not $\mu$-split over $N$.} \label{fig:sym}
\end{figure}

In \cite{Va-sym} under the assumption of the amalgamation property, this notion is shown to be equivalent to the statement that reduced towers are continuous --  the gap in the proof of Theorem 3.1.15 of \cite{ShVi} that is acknowledged and partially, but not completely, resolved in the errata \cite{Va-errata}.  VanDieren and Vasey show that for classes that satisfy the full amalgamation property, $\lambda$ categoricity implies $\mu$-symmetry for $\mu$ satisfying $\LS(\K)\leq\mu<\cf(\lambda)$ \cite[Corollary 7.2]{VV-transfer}.  In section \ref{sec:symmetry}, we verify that the arguments from \cite{Va-sym} and  \cite{VV-transfer} can be carried out in this context, thereby fully resolving the problem described in \cite{Va-errata}.  This will show the  implication $\ref{assumption item}\Rightarrow\ref{limit item}$ of Theorem \ref{necessary theorem}.

%%%%%%%%%%%%%%%%%%%%%%%%%%%%%%%%%%%%%%%%%%%%%%%%%%%%%%%%%%%%%%%
\section{Limit and Saturated Models}\label{sec:preliminaries}
In this section we verify some basic facts about saturated models in the context of Theorem \ref{necessary theorem} where only a limited amount of amalgamation is assumed.
In this section we make the following assumptions which follow from the assumptions of Theorem \ref{necessary theorem}:
\begin{hypothesis}\label{hyp:limit}
We assume that $\K$ is an abstract elementary class satisfying the following conditions for a fixed $\kappa$ with $\LS(\K)\leq\kappa<\lambda$:
\begin{enumerate}
\item Density of amalgamation bases of cardinality $\kappa$ and $\kappa^+$.
\item Limit models of cardinality $\kappa$ and $\kappa^+$ are amalgamation bases.
\item For $\chi=\kappa$ and $\kappa^+$, for every limit ordinal $\theta<\chi^+$ and every amalgamation base $N\in\K_\chi$ there exists $M\in\K$ a $(\chi,\theta)$-limit model extending $N$.
\end{enumerate}

\end{hypothesis}

Because we do not have the full amalgamation property, it may be the case that there are two non-isomorphic Galois-saturated models of cardinality $\kappa^+$ in our context.  For instance we might have a Galois-saturated model of cardinality $\kappa^+$ that is trivially saturated by way of having no or few submodels of cardinality $\kappa$ that are amalgamation bases.  
Alternatively, we might have two saturated models: one which is an amalgamation base and one which is not.
Fortunately we can avoid these kinds of anomalies in our proofs in later sections by restricting ourselves to saturated models which are dense with amalgamation bases.

\begin{definition}
A model $M$ of cardinality $>\kappa$ is said to be \emph{dense with $\kappa$-amalgamation bases} if for every $N\prec_{\K}M$ of cardinality $\kappa$ there exists an amalgamation base
$N'\in\K_\kappa$ for which $N\prec_{\K}N'\prec_{\K}M$.

\end{definition}

%Notice that $\mu$-stability implies not only the existence of saturated models, but also the existence of limit models.  To show that limit models exist we first need to show that universal extensions exist.  This was one of the non-essential uses of GCH in \cite{ShVi}.

\begin{lemma}\label{sat universal lemma}

Suppose that $M$ is a saturated model of cardinality $\kappa^+$ that is dense with $\kappa$-amalgamation bases.  Then $M$ is  universal over $N$ for every amalgamation base $N\prec_{\K}M$ 
 of cardinality $\kappa$.
\end{lemma}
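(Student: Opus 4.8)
The plan is to prove the equivalent statement that every $N'\in\K$ with $N\prec_\K N'$ and $\|N'\|\le\|M\|=\kappa^+$ admits a $\K$-embedding into $M$ that is the identity on $N$. I would organize this around two reductions followed by one core construction. First, by density of amalgamation bases (Hypothesis \ref{hyp:limit}) every extension of $N$ of cardinality $\kappa$, respectively $\kappa^+$, sits inside an amalgamation base of the same cardinality, so it is enough to embed amalgamation-base extensions $N'$ over $N$. Second, for the case $\|N'\|=\kappa$ note that, by Fact \ref{limits are ab} with $\chi=\kappa$, there is a $(\kappa,\kappa)$-limit model $D$ over $N$; since $D$ is universal over $N$ among $\kappa$-sized extensions, $N'$ $\K$-embeds into $D$ over $N$, and hence it suffices to embed this one fixed model $D$ into $M$ over $N$. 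The case $\|N'\|=\kappa^+$ reduces to the case $\|N'\|=\kappa$ by iterating it along an increasing chain $\langle N'_i\mid i<\kappa^+\rangle$ of $\kappa$-sized amalgamation bases exhausting $N'$ with $N'_0=N$ — at limit stages one jumps, by Hypothesis \ref{hyp:limit}, to an amalgamation base properly containing the union (possible since that union is a proper $\prec_\K$-submodel of $N'$) — and pasting the resulting $\K$-embeddings together.

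For the core construction I would resolve $D=\Union_{i<\kappa}D_i$ as in the definition of a $(\kappa,\kappa)$-limit model over $N$: $D_0=N$, each $D_i$ an amalgamation base of cardinality $\kappa$, $D_{i+1}$ universal over $D_i$, and $\langle D_i\mid i<\kappa\rangle$ increasing and continuous. Then I would build an increasing and continuous chain of $\K$-embeddings $f_i\colon D_i\to M$ with $f_0=\id_N$, subject to the invariant that each image $f_i[D_i]$ is an amalgamation base of cardinality $\kappa$ with $f_i[D_i]\prec_\K M$. At a successor $i+1$, transport $D_{i+1}$ along $f_i$ to a $\kappa$-sized amalgamation base over $f_i[D_i]$ and use saturation of $M$ to realize inside $M$, over the amalgamation base $f_i[D_i]$, the corresponding type; this gives $f_{i+1}\supseteq f_i$, after which one thickens the image, still inside $M$, to a $\kappa$-sized amalgamation base using that $M$ is dense with $\kappa$-amalgamation bases. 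At a limit $i$ one takes unions: $D_i$ is then a $(\kappa,i)$-limit model and its image is a $\prec_\K$-increasing union in $M$ of amalgamation bases with universality at successors, hence a limit model and so an amalgamation base. After $\kappa$ steps $f=\Union_{i<\kappa}f_i$ is the desired $\K$-embedding of $D$ into $M$ fixing $N$, which finishes the lemma.

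The hard part will be the successor step of the core construction — verifying that saturation of $M$ of cardinality $\kappa^+$ delivers not merely a realization of a single Galois type but a $\prec_\K$-copy in $M$, over a $\kappa$-sized amalgamation base $P\prec_\K M$, of an arbitrary $\kappa$-sized amalgamation-base extension of $P$. This is exactly where ``dense with $\kappa$-amalgamation bases'' does its work: it ensures that all the base models occurring in the construction remain amalgamation bases, so that Galois types over them, $\mu$-splitting, and the extension and uniqueness properties recalled earlier are all available, and it is what rules out the ``trivially saturated'' models with too few $\kappa$-sized amalgamation-base submodels mentioned just before the lemma. A minor additional point, needed when $\kappa$ is singular, is to insist that every $D_i$ and every image have cardinality exactly $\kappa$, so that Hypothesis \ref{hyp:limit} and Fact \ref{limits are ab} — formulated for the cardinals $\kappa$ and $\kappa^+$ — apply at each stage; granting that, the passage from the $\kappa$-sized case to the $\kappa^+$-sized case is routine.
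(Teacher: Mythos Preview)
The successor step of your core construction is a genuine gap, and in fact it is the whole content of the lemma. You need to extend $f_i\colon D_i\to M$ to $f_{i+1}\colon D_{i+1}\to M$, where $D_{i+1}$ is a $\kappa$-sized extension of $D_i$; this asks exactly that $M$ be universal over the $\kappa$-sized amalgamation base $f_i[D_i]$ for $\kappa$-sized extensions---which is the $\kappa$-case of the lemma itself, with $f_i[D_i]$ in place of $N$. Saturation only yields realizations of Galois $1$-types, not $\K$-embeddings of $\kappa$-sized models, and the non-splitting machinery you invoke does not convert one into the other. So the outline is circular: the ``hard part'' you explicitly defer is the lemma.

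The paper's proof does the honest work directly. It fixes $N'$ of cardinality $\kappa$, enumerates $N'\setminus N$ as $\langle a_i\mid i<\kappa\rangle$, and realizes one element at a time: at stage $j+1$ it uses density to find a $\kappa$-sized amalgamation base $\hat M_j\prec_\K M$ containing $f_j[N_j]$, then uses saturation to realize in $M$ the $1$-type of (the image of) $a_j$ over $\hat M_j$. The nontrivial bookkeeping is that the images $f_j[N_j]$ need not be amalgamation bases, so the paper maintains in parallel auxiliary amalgamation bases $N'_j$ and maps $f'_j$ into a $(\kappa,\kappa^+)$-limit ``monster'' $M^*\succ_\K M$, arranging that $N'_{j+1}$ is universal over $N'_j$ so that unions at limit stages remain amalgamation bases. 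If you tried to fill your successor step you would be forced into exactly this element-by-element argument inside each block $D_{i+1}\setminus D_i$, rendering the detour through the $(\kappa,\kappa)$-limit $D$ superfluous. Your reduction from $\|N'\|=\kappa^+$ to $\|N'\|=\kappa$ is fine; the paper treats only the $\kappa$-sized case explicitly.
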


Notice that we do not require that $M$ be an amalgamation base at this stage; however, later, in Corollary \ref{sat is limit} this is established.

\begin{proof}
The proof is an adaptation of the proof that saturated models are model homogeneous which assumes the full amalgamation property \cite[Theorem 2.12]{GV}.   Let $M^*$ be a $(\kappa,\kappa^+)$-limit model extending $M$ which is also universal over $N$.  
We will use $M^*$ as a replacement for a monster model.

Fix $N'$ a model of cardinality $\kappa$ so that $N\prec_{\K}N'\prec_{\K}M^*$.  Let $\langle a_i\mid i<\kappa\rangle $ be an enumeration of $N'\backslash N$.  By induction on $i<\kappa$ we will define increasing and continuous sequences of models $\langle N'_i\mid i<\kappa\rangle$ and $\langle N_i\mid i<\kappa\rangle$ and mappings $\langle f_i\mid i<\kappa\rangle $
and $\langle f'_i\mid i<\kappa\rangle$
 so
that the following properties are satisfied:
\begin{enumerate}
\item $N_i$ is a  model of cardinality $\kappa$ (note that we do not require $N_i$ to be an amalgamation base.)
\item\label{ab condition} $N'_i$ is an amalgamation base of cardinality $\kappa$.
\item $N_i\prec_{\K}N'_i\prec_{\K}M^*$.
\item $N_0=N$ and $N'_0=N'$.
\item $a_i\in N_{i+1}$.
\item\label{univ over N'} either $N'_{j+1}=N'_j$ or $N'_{j+1}$ is universal over $N'_j$
\item $f_i:N_i\rightarrow M$ with $f_0=id_N$.
\item $f'_i:N'_i\rightarrow M^*$ with $f_i\subseteq f'_i$.
\end{enumerate}

Clearly this construction is sufficient since $\bigcup_{i<\kappa}f_i\restriction N'$ is as required.  

The only issue that needs to be checked at limit stages is that $N'_j$ is an amalgamation base, but this is guaranteed by conditions \ref{ab condition} and \ref{univ over N'} of the construction and 
Hypothesis \ref{hyp:limit}.

Let us consider the successor stage: $i=j+1$.  
Suppose that $f_j$, $f'_j$, $N_j$, and $N'_j$ have been defined.
If $a_j\in N_j$, let $N_{j+1}:=N_j$, $N'_{j+1}:=N'_j$,  $f_{j+1}:=f_j$, and $f'_{j+1}:=f'_j$.  So suppose that $a_j\notin N_j$.  
Let $M_j:=f_j[N_j]$ and $M'_j:=f'_j[N'_j]$.  Notice that the diagram below commutes:
\[
\xymatrix{\ar @{} [dr] N'_j
\ar[r]_{f'_j}  &M'_j \\
N_j \ar[u]^{\id} \ar[r]_{f_j}
& M_j \ar[u]_{id}
}
\]

Since $a_j\in N'_j\backslash N_j$, $f'_j(a_j)\in M'_j\backslash M_j$.  There are two cases to consider: $f'_j(a_j)\in M$ and $f'_j(a_j)\notin M$.
  If $f'_j(a_j)\in M$, since $N'_j$ is an amalgamation base, we can find $\bar{f_j}$ an automorphism of $M^*$ extending $f'_j$.  Let $M_{j+1}$ be a submodel of $M$ of cardinality $\kappa$ extending $M_j$ and $f'_j(a_j)$.  Let $N_{j+1}:=\bar f_{j}^{-1}[M_j]$.  Then let $N'_{j+1}$ be an amalgamation base of cardinality $\kappa$ which is a universal extension over $N'_j$, contains $N_{j+1}$, and lies inside $M^*$.  Then $f_{j+1}:=\bar f_j\restriction N_{j+1}$ and $f'_{j+1}:=\bar f_{j}\restriction N'_{j+1}$ are as required.

For the other case suppose that  $f'_j(a_j)\notin M$.  As before set $M_j:=f_j[N_j]$.  Since $M$ is dense with amalgamation bases there exists $\hat M_j\prec_{\K}M$ extending $M_j$ which is an amalgamation base of cardinality $\kappa$.  We can then consider the non-algebraic type, $p:=\tp(f'_j(a_j)/\hat M_j)$.  Because $M$ is saturated there exists $b\in M$ realizing $p$.  Let $\hat M^b$ 
be an amalgamation base of cardinality $\kappa$ inside $M$ containing $b$ and extending $\hat M_j$.  By the definition of equality of types, we can find $h\in \Aut_{\hat M_j}M^*$ so that $h(b)=a$ and the following diagram commutes:

\[
\xymatrix{\ar @{} [dr] M'_j
\ar[r]_{id}  &M^* \\
\hat M_j \ar[u]^{\id} \ar[r]_{id}
& \hat M^b\ar[u]_{h}
}
\]

We can replace $M^*$ in the diagram with some submodel $\hat M$ of cardinality $\kappa$ containing $h[\hat M^b]$ and universal over $M'_j$.  This is possible since $M'_j$ is isomorphic to $N'_j$ which was chosen to be an amalgamation base.
Then gluing this diagram together with the previous diagram gives us

\[
\xymatrix{\ar @{} [dr] N'_j
\ar[r]_{f'_j}  &M'_j\ar[r]_{id} &\hat M \\
N_j \ar[u]^{\id} \ar[r]_{f_j}
& M_j \ar[u]_{id} \ar[r]_{id} &\hat M^b \ar[u]_{h} \ar[r]_{id}&M
}
\]

Let $N'_{j+1}:=\bar f_{j}^{-1}[\hat M]$ and set $N_{j+1}:=\bar f_j^{-1}(h[\hat M^b])$.  
\[
\xymatrix{
\ar @{} [dr] N'_{j+1}\ar[drr]^{\bar f_j}\\
N'_j \ar[u]_{id}
\ar[r]_{f'_j}  &M'_j\ar[r]_{id} &\hat M \\
N_j \ar[u]^{\id} \ar[r]_{f_j}
& M_j \ar[u]_{id} \ar[r]_{id} &\hat M^b \ar[u]_{h} \ar[r]_{id}&M
}
\]

Then $f'_{j+1}:=\bar f_j\restriction N'_{j+1}$ and $f_{j+1}:=(h^{-1}\circ \bar f_j)\restriction N_{j+1}$ are as required.

\end{proof}

Notice that $(\kappa^+,\kappa^+)$-limit models and $(\kappa,\kappa^+)$-limit models are isomorphic:   

\begin{proposition}\label{unique saturated proposition}

If $M$ is a $(\kappa,\kappa^+)$-limit model over $N$ and $M'$ is a $(\kappa^+,\kappa^+)$-limit model  over some $M'_0$ containing $N$, them $M$ and $M'$ are isomorphic over $N$.
\end{proposition}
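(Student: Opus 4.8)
The plan is to verify that both $M$ and $M'$ are saturated models of cardinality $\kappa^+$ that are dense with $\kappa$-amalgamation bases, and then to build an isomorphism $M\to M'$ fixing $N$ by a back-and-forth through $\kappa$-amalgamation bases, using Lemma~\ref{sat universal lemma} at the successor stages. For $M$ both properties are essentially immediate from its being a $(\kappa,\kappa^+)$-limit model: writing $M=\Union_{i<\kappa^+}M_i$ for a defining resolution, every submodel of $M$ of cardinality $\kappa$ is bounded in the chain (since $\cf(\kappa^+)=\kappa^+>\kappa$), hence contained in some amalgamation base $M_i$, so $M$ is dense with $\kappa$-amalgamation bases; and any $p\in\gaS(A)$ with $A\prec_{\K}M$ a $\kappa$-amalgamation base is, for the same reason, a type over some $M_i$ with $A\prec_{\K}M_i$, it extends to $\gaS(M_i)$ because $A$ and $M_i$ both have cardinality $\kappa$ and $A$ is an amalgamation base, and it is then realized in $M_{i+1}$ by universality of $M_{i+1}$ over $M_i$; so $M$ is saturated.

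For $M'$, density of $\kappa$-amalgamation bases is part of Hypothesis~\ref{hyp:limit}. For saturation, write $M'=\Union_{i<\kappa^+}M'_i$ for a defining resolution; given a $\kappa$-amalgamation base $A\prec_{\K}M'$ and $p\in\gaS(A)$ one has $A\prec_{\K}M'_i$ for some $i$, and it suffices to realize $p$ in $M'_{i+1}$, which follows once $p$ is extended to a Galois type over $M'_i$, using universality of $M'_{i+1}$ over $M'_i$. Granting this, Lemma~\ref{sat universal lemma} gives that $M$ and $M'$ are universal over every $\kappa$-amalgamation base they contain, in particular over $N$. One then constructs by recursion on $\xi<\kappa^+$ increasing continuous chains $\langle C_\xi\rangle$ and $\langle D_\xi\rangle$ of $\kappa$-amalgamation bases with $\Union_\xi C_\xi=M$, $\Union_\xi D_\xi=M'$, $C_\xi\prec_{\K}M$, $D_\xi\prec_{\K}M'$, and coherent isomorphisms $h_\xi\colon C_\xi\to D_\xi$, starting from $C_0=D_0=N$, $h_0=\id_N$, and arranging in addition that $C_{\xi+1}$ is universal over $C_\xi$ and $D_{\xi+1}$ over $D_\xi$. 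At a successor $\xi+1$ one picks $C_{\xi+1}$ a $\kappa$-amalgamation base inside $M$ universal over $C_\xi$ (a suitable $M_j$ works, universality over $C_\xi$ following from universality in the resolution together with $C_\xi$ being a $\kappa$-amalgamation base) and containing the next point of a fixed enumeration of $M$; transports $C_{\xi+1}$ across $h_\xi$ and embeds the resulting $\kappa$-amalgamation base extension of $D_\xi$ into $M'$ over $D_\xi$ by universality of $M'$ over $D_\xi$; enlarges inside $M'$ to a $\kappa$-amalgamation base $D_{\xi+1}$ containing the next point of an enumeration of $M'$; and transports back into $M$ over $C_{\xi+1}$ by universality of $M$ over $C_{\xi+1}$, exactly as in the proof of Lemma~\ref{sat universal lemma}. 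At a limit $\xi$ one takes unions; $C_\xi$ and $D_\xi$ are then $(\kappa,\xi)$-limit models, hence amalgamation bases by Hypothesis~\ref{hyp:limit}, so the recursion continues. The union $h=\Union_{\xi<\kappa^+}h_\xi$ is an isomorphism of $M$ onto $M'$ fixing $N$ pointwise.

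The step I expect to be the main obstacle is the saturation of $M'$: extending a Galois type over a $\kappa$-amalgamation base $A$ to a Galois type over the $\kappa^+$-sized model $M'_i\succ_{\K}A$, equivalently amalgamating a $\kappa$-sized extension of $A$ realizing the type with $M'_i$ over $A$. This is the one point where the $\kappa^+$-sized pieces of $M'$ must be handled, and it is settled using that $M'_i$ sits inside the $(\kappa^+,\kappa^+)$-limit model $M'$ and is dense with $\kappa$-amalgamation bases (Hypothesis~\ref{hyp:limit}), resolving $M'_i$ appropriately and pushing the type extension through the resolution as in \cite{Va}. Once this is in hand, everything else is a routine transcription of the classical argument that saturated models of a fixed cardinality over a common base are isomorphic over that base; in particular the continuity of the back-and-forth chains is guaranteed precisely by the universal-successor bookkeeping and by limit models being amalgamation bases.
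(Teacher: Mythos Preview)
Your route is genuinely different from the paper's, and it contains a circularity that breaks the argument. You assert that ``for $M'$, density of $\kappa$-amalgamation bases is part of Hypothesis~\ref{hyp:limit}.'' It is not. Item~(1) of Hypothesis~\ref{hyp:limit} is density of amalgamation bases \emph{in the class} (every model of cardinality $\kappa$ has an amalgamation-base extension of cardinality $\kappa$); it says nothing about density of $\kappa$-amalgamation bases \emph{as submodels of} the $(\kappa^+,\kappa^+)$-limit model $M'$. That internal density is precisely the content of Lemma~\ref{limits are dense with ab}, and the proof of that lemma invokes Proposition~\ref{unique saturated proposition}. So you are assuming what you are trying to prove. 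The same circularity reappears in your saturation argument for $M'$: extending $p\in\gaS(A)$ to $\gaS(M'_i)$ by ``resolving $M'_i$ appropriately'' means resolving $M'_i$ into a chain of $\kappa$-amalgamation bases, which again you do not have at this point. And without knowing $M'$ is dense with $\kappa$-amalgamation bases you cannot invoke Lemma~\ref{sat universal lemma} for $M'$ in your back-and-forth either, since that lemma takes density as an explicit hypothesis.

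The paper sidesteps all of this by exploiting the asymmetry between $M$ and $M'$ rather than reducing both to ``saturated $+$ dense with $\kappa$-amalgamation bases.'' It works directly with the given resolutions: the $(\kappa,\kappa^+)$-resolution $\langle M_i\rangle$ of $M$ already consists of $\kappa$-amalgamation bases with $M_{i+1}$ universal over $M_i$, and the $(\kappa^+,\kappa^+)$-resolution $\langle M'_j\rangle$ of $M'$ has $M'_{j+1}$ universal over $M'_j$. One then builds an increasing continuous family of embeddings $f_i:M_i\to M'$ with $a'_i\in f_{i+1}[M_{i+1}]$ for a fixed enumeration $\langle a'_i\rangle$ of $M'$, so that $\bigcup_i f_i$ is automatically a bijection. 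The point is that every amalgamation step is taken over one of the $M_i$'s, which are amalgamation bases by hypothesis; the argument never needs to locate a $\kappa$-amalgamation base inside $M'$ or to know that $M'$ is saturated. Your strategy of first proving saturation and internal density for $M'$ would, if it worked, give a pleasant uniform picture, but in the logical order of the paper those facts about $M'$ are \emph{consequences} of this proposition, not inputs to it.
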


\begin{proof}
Let $\langle M_i\in\K_\kappa\mid i<\kappa^+\rangle$ witness that $M$ is a  $(\kappa,\kappa^+)$-limit model with  $N=M_0$ and
let $\langle M'_i\in\K_{\kappa^+}\mid i<\kappa^+\rangle$ witness that $M'$ is a  $(\kappa^+,\kappa^+)$-limit model with $N\prec_{\K}M'_0$.  Fix $\langle a'_i\mid i<\kappa^+\rangle$  an enumeration of $M'$.

 Since the models $M_i$ in the resolution of $M$ are all amalgamation bases, we are able to carry out the standard construction of  an isomorphism $f:M\cong M'$ by an increasing and continuous sequence of partial mappings
$f_i:M_i\rightarrow M'$ so that
$f_0$ is the identity mapping and
$a'_i\in f_{i+1}[M_{i+1}]$.
\end{proof}

Proposition \ref{unique saturated proposition} along with the following corollaries are used in the proof of $\ref{union item}\Rightarrow\ref{assumption item}$ of Theorem \ref{necessary theorem}.  A subtlety here is that the standard proofs of the uniqueness of saturated models require that the models are dense with amalgamation bases.

\begin{corollary}\label{sat is limit}
If $M$ is a saturated model of cardinality $\kappa^+$ that is dense with $\kappa$-amalgamation bases, then $M$ is a $(\kappa,\kappa^+)$-limit model.
\end{corollary}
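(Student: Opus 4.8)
The plan is to build a resolution of $M$ directly witnessing that it is a $(\kappa,\kappa^+)$-limit model. Fix an enumeration $\langle m_i\mid i<\kappa^+\rangle$ of the universe of $M$, and construct by induction on $i<\kappa^+$ an increasing and continuous chain $\langle N_i\mid i<\kappa^+\rangle$ of amalgamation bases of cardinality $\kappa$ with $N_i\prec_{\K}M$, $m_i\in N_{i+1}$, and $N_{i+1}$ universal over $N_i$. Granting the construction, $\Union_{i<\kappa^+}N_i=M$ (every $m_i$ is captured by stage $i+1$, and each $N_i\prec_{\K}M$), and this chain exhibits $M$ as a $(\kappa,\kappa^+)$-limit model over $N_0$.

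For the base step, L\"owenheim--Skolem produces a submodel of $M$ of cardinality $\kappa$, which, since $M$ is dense with $\kappa$-amalgamation bases, can be enlarged inside $M$ to an amalgamation base $N_0$. At a limit stage $i$ put $N_i=\Union_{j<i}N_j$; this is a model of cardinality $\kappa$ with $N_i\prec_{\K}M$ by the chain axiom of AECs, and because the chain below $i$ is continuous with each successor universal over its predecessor, $N_i$ is a $(\kappa,i)$-limit model, hence an amalgamation base by Hypothesis \ref{hyp:limit}(2). The successor step is where the hypotheses interlock. Given the amalgamation base $N_i\prec_{\K}M$ of cardinality $\kappa$, Hypothesis \ref{hyp:limit}(3) supplies a $(\kappa,\omega)$-limit model $N^*$ over $N_i$; it is an amalgamation base of cardinality $\kappa$ and is universal over $N_i$. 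By Lemma \ref{sat universal lemma}, $M$ is universal over $N_i$, so there is a $\K$-embedding $g\colon N^*\to M$ with $g\restriction N_i=\id_{N_i}$, and $g[N^*]\prec_{\K}M$ is an amalgamation base of cardinality $\kappa$ that is still universal over $N_i$. Now use L\"owenheim--Skolem to pick $N'\prec_{\K}M$ of cardinality $\kappa$ with $g[N^*]\cup\{m_i\}\subseteq N'$, and use density of $M$ with $\kappa$-amalgamation bases to enlarge $N'$ to an amalgamation base $N_{i+1}$ of cardinality $\kappa$ with $N'\prec_{\K}N_{i+1}\prec_{\K}M$. Since $g[N^*]\prec_{\K}N_{i+1}$ and $g[N^*]$ is universal over $N_i$, also $N_{i+1}$ is universal over $N_i$, and $m_i\in N_{i+1}$.

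The only real difficulty is the bookkeeping needed to keep all four conditions --- amalgamation base, $\prec_{\K}M$, capturing $m_i$, and universality over the predecessor --- simultaneously true at every stage. Universality at successors is the delicate one: it is obtained by pulling a small universal extension (Hypothesis \ref{hyp:limit}(3)) back into $M$ along the universality of $M$ over $\kappa$-amalgamation bases (Lemma \ref{sat universal lemma}), together with the elementary observation that any $\prec_{\K}$-extension of a model universal over $N_i$ remains universal over $N_i$. At limit stages, density of $M$ with $\kappa$-amalgamation bases together with Hypothesis \ref{hyp:limit}(2) is exactly what keeps the union an amalgamation base; no appeal to the full amalgamation property is made anywhere. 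This also makes transparent the equivalence, implicit in Proposition \ref{unique saturated proposition}, of the corollary with the uniqueness of saturated models that are dense with $\kappa$-amalgamation bases.
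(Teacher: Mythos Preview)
Your proof is correct and follows essentially the same approach the paper has in mind: the paper's one-line proof defers to Proposition~\ref{unique saturated proposition}, whose argument builds an increasing continuous chain of $\kappa$-sized amalgamation bases using universality at each successor stage, and your construction does exactly this directly inside $M$ via Lemma~\ref{sat universal lemma} and the density hypothesis. The only cosmetic difference is that the paper phrases it as a back-and-forth with an external $(\kappa,\kappa^+)$-limit model, whereas you build the witnessing resolution inside $M$ without the detour; the content is the same.
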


\begin{proof}
Similar to the proof of Proposition \ref{unique saturated proposition}.
\end{proof}

Furthermore,  we will need to show that the saturated model that we construct is in fact an amalgamation base.  This follows from Proposition \ref{unique saturated proposition}, Corollary \ref{sat is limit}, and Fact \ref{limits are ab}.
\begin{corollary}\label{sat is ab}
If $M$ is a saturated model of cardinality $\kappa^+$ that is dense with $\kappa$-amalgamation bases, then $M$ is an amalgamation base.
\end{corollary}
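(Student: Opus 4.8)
The plan is to reduce the claim to the uniqueness of saturated-as-limit models together with Fact \ref{limits are ab}. First I would invoke Corollary \ref{sat is limit}: since $M$ is saturated of cardinality $\kappa^+$ and dense with $\kappa$-amalgamation bases, $M$ is a $(\kappa,\kappa^+)$-limit model. In particular $M$ is a limit model of cardinality $\kappa^+$ in the sense of the definition of limit model (the witnessing resolution $\langle M_i\in\K_\kappa\mid i<\kappa^+\rangle$ is an increasing and continuous chain of amalgamation bases with $M_{i+1}$ universal over $M_i$, and $M=\Union_{i<\kappa^+}M_i$). Note that here we are viewing $M$ as a $(\kappa,\kappa^+)$-limit model, hence as a limit model of cardinality $\kappa^+$, which is the hypothesis format needed to feed into Fact \ref{limits are ab}.

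Next I would apply Fact \ref{limits are ab} at the cardinal $\chi=\kappa^+$, which is legitimate because the hypotheses of Theorem \ref{necessary theorem} (no maximal models, $\lambda$-categoricity, GCH, and $\kappa^+<\lambda$ since $\mu=\kappa^+$ and $\lambda=\mu^{+n}$ with $n>0$) guarantee that any limit model of cardinality $\kappa^+$ is an amalgamation base. Applying this to the limit model $M$ of cardinality $\kappa^+$ gives immediately that $M$ is an amalgamation base, which is the conclusion.

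The only subtlety — and the single point where one must be a little careful — is that Fact \ref{limits are ab} as stated takes as input an object already known to be a limit model in the precise sense of the definition, and Corollary \ref{sat is limit} supplies exactly that (a resolution into amalgamation bases of cardinality $\kappa$). So the chain of deductions is $M$ saturated and dense with $\kappa$-amalgamation bases $\Rightarrow$ (by Corollary \ref{sat is limit}) $M$ is a $(\kappa,\kappa^+)$-limit model $\Rightarrow$ (by Fact \ref{limits are ab} applied with $\chi=\kappa^+<\lambda$) $M$ is an amalgamation base. Since each implication is already established earlier in the excerpt, there is no genuine obstacle; the proof is a two-line composition, and indeed the paper's own remark after the statement flags precisely these three ingredients (Proposition \ref{unique saturated proposition}, Corollary \ref{sat is limit}, and Fact \ref{limits are ab}). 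The Proposition is not strictly needed for this particular corollary — it is Corollary \ref{sat is limit} that does the work — but it is invoked in the surrounding argument for $\ref{union item}\Rightarrow\ref{assumption item}$.
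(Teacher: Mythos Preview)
Your chain of deductions is right in spirit, but there is a genuine gap at the step where you feed $M$ into Fact~\ref{limits are ab}. A $(\kappa,\kappa^+)$-limit model is \emph{not} a ``limit model of cardinality $\kappa^+$'' in the sense of the paper's definition: the definition requires a resolution $\langle M_i\in\K_\mu\mid i<\theta\rangle$ with $\theta<\mu^+$ and each $M_i$ of cardinality $\mu$. For a $(\kappa,\kappa^+)$-limit model the pieces have cardinality $\kappa$, not $\kappa^+$, and the length $\kappa^+$ is not below $\kappa^+$; so it is neither a limit model of cardinality $\kappa$ nor one of cardinality $\kappa^+$ as defined. Fact~\ref{limits are ab} (and Hypothesis~\ref{hyp:limit}) therefore do not apply directly.

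This is exactly why Proposition~\ref{unique saturated proposition} is cited and is \emph{not} optional. Once Corollary~\ref{sat is limit} gives you that $M$ is a $(\kappa,\kappa^+)$-limit model, Proposition~\ref{unique saturated proposition} tells you $M$ is isomorphic (over a suitable base) to a $(\kappa^+,\kappa^+)$-limit model; pulling back the witnessing resolution along the isomorphism shows $M$ itself is a $(\kappa^+,\kappa^+)$-limit model, i.e.\ a bona fide limit model of cardinality $\kappa^+$. Only then does Fact~\ref{limits are ab} (applied at $\kappa^+<\lambda$) yield that $M$ is an amalgamation base. So your last paragraph has it backwards: Proposition~\ref{unique saturated proposition} is the bridge that makes the argument go through, and the paper's listing of all three ingredients is exact.
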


How might we construct models that are dense with amalgamation bases?  First notice that  $(\kappa,\kappa^+)$-limits are trivially dense with amalgamation bases of cardinality $\kappa$.  
Thus by Proposition \ref{unique saturated proposition}, $(\kappa^+,\kappa^+)$-limit models are also dense with amalgamation bases of cardinality $\kappa$.
This allows us to show that any limit model is dense with amalgamation bases:
\begin{lemma}\label{limits are dense with ab}
For $\theta$ a limit ordinal $<\kappa^{++}$,
if $M$ is a $(\kappa^+,\theta)$-limit model, then $M$ is dense with amalgamation bases of cardinality $\kappa$.
\end{lemma}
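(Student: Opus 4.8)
Fix the $(\kappa^+,\theta)$-limit model $M=\bigcup_{i<\theta}M_i$ with witnessing resolution $\langle M_i\mid i<\theta\rangle$ and base $M_0$. The plan is to treat the two possibilities for $\cf(\theta)$ separately. Since there is no cardinal strictly between $\kappa$ and $\kappa^+$, and $\cf(\theta)\le\theta<\kappa^{++}$, we have either $\cf(\theta)=\kappa^+$ or $\cf(\theta)\le\kappa$. If $\cf(\theta)=\kappa^+$, then by Fact \ref{limits are ab} (or Hypothesis \ref{hyp:limit}) there is a $(\kappa^+,\kappa^+)$-limit model $M^*$ over $M_0$, and since $\cf(\theta)=\cf(\kappa^+)$ the back-and-forth observation recorded in Section \ref{sec:background} gives $M\cong_{M_0}M^*$. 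As $M^*$ is dense with $\kappa$-amalgamation bases (the observation recorded immediately before this lemma, using Proposition \ref{unique saturated proposition}), and being dense with $\kappa$-amalgamation bases is invariant under isomorphism, $M$ is dense with $\kappa$-amalgamation bases. So it remains to handle the case $\delta:=\cf(\theta)\le\kappa$.

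Reindexing along a continuous cofinal sequence in $\theta$ of order type $\delta$ — and noting that if $M_{i+1}$ is universal over the amalgamation base $M_i$ then $M_{j}$ is universal over $M_i$ for all $j>i$ — I may assume $M=\bigcup_{\xi<\delta}M_\xi$ with $\langle M_\xi\mid\xi<\delta\rangle$ an increasing continuous chain of amalgamation bases, $M_{\xi+1}$ universal over $M_\xi$. The first step is a \emph{reduction}: I will replace this resolution by an increasing continuous chain $\langle S_\xi\mid\xi<\delta\rangle$ with $M_\xi\preceq_{\K}S_\xi\preceq_{\K}M_{\xi+1}$ (so still $\bigcup_{\xi<\delta}S_\xi=M$), with $S_{\xi+1}$ universal over $S_\xi$, and such that each $S_\xi$ with $\xi$ a successor (or $\xi=0$) is a $(\kappa^+,\kappa^+)$-limit model. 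At successor stages take $S_{\xi+1}$ to be a $(\kappa^+,\kappa^+)$-limit model over $M_{\xi+1}$ (which is then universal over $S_\xi$, since $S_\xi\preceq_{\K}M_{\xi+1}$ is an amalgamation base) and pull it back into $M_{\xi+2}$ over $M_{\xi+1}$ using universality of $M_{\xi+2}$ over $M_{\xi+1}$; at limit stages $\xi<\delta$ put $S_\xi=\bigcup_{\zeta<\xi}S_\zeta$, which is a $(\kappa^+,\xi)$-limit model and hence an amalgamation base of cardinality $\kappa^+$ by Hypothesis \ref{hyp:limit} — this is what lets the construction continue. Renaming, I may thus assume that in the resolution $M=\bigcup_{\xi<\delta}M_\xi$ every $M_\xi$ with $\xi$ a successor is a $(\kappa^+,\kappa^+)$-limit model, hence (by Proposition \ref{unique saturated proposition}, the observation preceding this lemma, and the fact that $(\kappa,\kappa^+)$-limit models are saturated) is saturated and dense with $\kappa$-amalgamation bases.

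Now fix $N\preceq_{\K}M$ with $|N|=\kappa$; since enlarging $N$ to a larger submodel of $M$ of cardinality $\kappa$ is harmless, I may first arrange $N=\bigcup_{\xi<\delta}N^{(\xi)}$ for an increasing continuous chain with $N^{(\xi)}\preceq_{\K}M_\xi$ and $|N^{(\xi)}|=\kappa$, obtained by letting $N^{(\xi)}$ be a L\"owenheim--Skolem hull inside $M_\xi$ of $(N\cap M_\xi)\cup\bigcup_{\zeta<\xi}N^{(\zeta)}$ and using continuity of $\langle M_\xi\rangle$. Then build a $(\kappa,\delta)$-limit model $N'=\bigcup_{\xi<\delta}N'_\xi$ with $N^{(\xi)}\subseteq N'_\xi\preceq_{\K}M_\xi$, $N'_\xi$ an amalgamation base, and $N'_{\xi+1}$ universal over $N'_\xi$: for $\xi=0$ use density of $\kappa$-amalgamation bases inside $M_0$; at a limit $\xi$ take $N'_\xi=\bigcup_{\zeta<\xi}N'_\zeta$, a $(\kappa,\xi)$-limit model and hence an amalgamation base by Hypothesis \ref{hyp:limit}; at a successor $\xi+1$, take (density inside $M_{\xi+1}$) an amalgamation base $P^*\in\K_\kappa$ with $N'_\xi\cup N^{(\xi+1)}\subseteq P^*\preceq_{\K}M_{\xi+1}$, let $L$ be a $(\kappa,\kappa)$-limit model over $P^*$ — which is then also universal over the amalgamation base $N'_\xi$ — and embed $L$ into $M_{\xi+1}$ over $P^*$; this last embedding exists because $M_{\xi+1}$ is saturated and dense with $\kappa$-amalgamation bases, hence universal over $P^*$ by Lemma \ref{sat universal lemma} (equivalently, unfold $M_{\xi+1}$ as a $(\kappa,\kappa^+)$-limit via Proposition \ref{unique saturated proposition}, observe $P^*$ sits below one amalgamation base of that resolution, and step up once). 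Set $N'_{\xi+1}$ to be the image of $L$. Then $N'=\bigcup_{\xi<\delta}N'_\xi$ is a $(\kappa,\delta)$-limit model, hence an amalgamation base by Hypothesis \ref{hyp:limit}, of cardinality $\kappa$, with $N\preceq_{\K}N'\preceq_{\K}M$, as required.

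The substantive part is the case $\cf(\theta)\le\kappa$: when the resolution of $M$ is short, a submodel of $M$ of cardinality $\kappa$ can be spread across the whole resolution, and there is no reason that the raw amalgamation bases $M_\xi$ admit amalgamation-base extensions of cardinality $\kappa$ internally. The reduction to a resolution made of saturated, $\kappa$-amalgamation-base-dense models is the device that repairs this, and the one genuinely non-formal point is the successor step of the final construction, where one must produce a \emph{universal} extension of the $\kappa$-sized amalgamation base $N'_\xi$ that lands \emph{inside} $M$ — this is where the saturation/density of the interpolated $M_{\xi+1}$ (via Lemma \ref{sat universal lemma}) is used. Everything else — the cofinality reindexing, the verification that $\langle S_\xi\rangle$ and $\langle N'_\xi\rangle$ are genuine limit-model resolutions, and the bookkeeping arranging $N=\bigcup_{\xi<\delta}N^{(\xi)}$ — is routine.
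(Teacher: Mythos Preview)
Your proof is correct and follows the same strategy as the paper's: arrange the resolution so that successor models are $(\kappa^+,\kappa^+)$-limits, then climb through it building a $(\kappa,\cdot)$-limit inside $M$ covering $N$. The paper does the first step in one line (invoking uniqueness of limit models of the same cofinality to assume the witnessing chain already has $M_{i+1}$ a $(\kappa^+,\kappa^+)$-limit over $M_i$, rather than your explicit interleaving of $S_\xi$'s) and handles the successor step by unfolding $M_{j+1}$ directly as a $(\kappa,\kappa^+)$-limit via Proposition~\ref{unique saturated proposition} and taking $N'_{j+1}$ to be the next model in that resolution past $N'_j$, rather than going through Lemma~\ref{sat universal lemma}. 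Your case split on $\cf(\theta)$ is absent from the paper --- the paper runs the construction uniformly along all of $\theta$ --- but is arguably the more careful move, since the paper's $\bigcup_{i<\theta}N'_i$ literally has cardinality $\kappa$ only when $|\theta|\le\kappa$.
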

\begin{proof}
Let $M$ be a  $(\kappa^+,\theta)$-limit model.  By the uniqueness of limit models of the same cofinality we may assume that $M=\Union_{i<\theta}M_i$ where $\langle M_i\mid i<\theta\rangle$ is an increasing and continuous sequence of amalgamation bases of cardinality $\kappa^+$ so that $M_{i+1}$ is a $(\kappa^+,\kappa^+)$-limit model over $M_i$.  Then by Proposition \ref{unique saturated proposition}, we know that each for successor $i$,  $M_i$ can be viewed as a $(\kappa,\kappa^+)$-limit model. For each successor $i<\theta$, let $\langle M^\alpha_i\in\K_\kappa\mid \alpha<\kappa^+\rangle$ witness that $M_i$ is a $(\kappa,\kappa^+)$-limit model.  

  Let $N\prec_{\K}M$ be a submodel of cardinality $\kappa$.  We need to find an amalgamation base $N'$ of cardinality $\kappa$ extending $N$ inside $M$.  
Without loss of generality, by renumbering if necessary, we may assume that $M^0_i\supseteq N\bigcap M_i$.

Define by induction on $i<\theta$ an increasing and continuous sequence $\langle N'_i\mid i <\theta\rangle$ of amalgamation bases of cardinality $\kappa$ so that $N'_{i+1}$ is universal over $N'_i$, $N'_i\prec_{\K}M_i$, and $N\bigcap M_i\subseteq N'_i$.  Let $N'_0:=M^0_0$.  At limit stages $i$, set $N'_i:=\Union_{j<i}N'_j$.  Notice $N'_i$ is a limit model by our inductive construction.  And, hence, it is an amalgamation base.  Now for the successor stage of the construction $i=j+1$, assume that $N'_j$ has been defined.  Since $N'_j$ has cardinality $\kappa$, we know that there exists $\alpha<\kappa^+$ so that $N'_j\prec_{\K}M^\alpha_{j+1}$.  Take $N'_{j+1}:=M^{\alpha+1}_{j+1}$.

Notice that $N':=\Union_{i<\theta}N'_i$ is a $(\kappa,\theta)$-limit model inside $M$ and extends $N$.  Since limit models are amalgamation bases, we are done.
\end{proof}

The following will also be used in the proof of $\ref{union item}\Rightarrow\ref{assumption item}$ of Theorem \ref{necessary theorem}.
\begin{lemma}\label{union is dense}
Suppose that $\theta$ is a limit ordinal $<\kappa^{++}$.
If $\langle M_i\mid i<\theta\rangle$ is an increasing and continuous sequence of saturated models of cardinality $\kappa^+$ and each is dense with $\kappa$ amalgamation bases, then 
$M:=\Union_{i<\theta}M_i$ is dense with $\kappa$-amalgamation bases.
\end{lemma}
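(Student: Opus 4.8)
The plan is to fix $N\prec_{\K}M$ of cardinality $\kappa$ and produce an amalgamation base $N'$ of cardinality $\kappa$ with $N\prec_{\K}N'\prec_{\K}M$, constructed as a $(\kappa,\cf(\theta))$-limit model sitting inside $M$, in the spirit of the proof of Lemma~\ref{limits are dense with ab}. Since $\theta<\kappa^{++}$, $\cf(\theta)$ is a regular cardinal $\leq\kappa^{+}$, and there are two cases. If $\cf(\theta)=\kappa^{+}$, then because $|N|=\kappa$ and $\langle M_i\rangle$ is continuous, $N\subseteq M_i$ for some $i<\theta$; by the coherence axiom $N\prec_{\K}M_i$, and since $M_i$ is dense with $\kappa$-amalgamation bases we obtain the required $N'$ with $N\prec_{\K}N'\prec_{\K}M_i\prec_{\K}M$. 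So assume $\delta:=\cf(\theta)\leq\kappa$ and fix an increasing continuous sequence $\langle i_\xi\mid\xi<\delta\rangle$ cofinal in $\theta$ with $i_0=0$. By Corollary~\ref{sat is limit} each $M_{i_\xi}$ is a $(\kappa,\kappa^{+})$-limit model; fix such a resolution $\langle M^{\alpha}_{i_\xi}\mid\alpha<\kappa^{+}\rangle$.

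Build by induction on $\xi<\delta$ a $\prec_{\K}$-increasing, continuous chain $\langle N'_\xi\mid\xi<\delta\rangle$ of amalgamation bases of cardinality $\kappa$ with $N'_\xi\prec_{\K}M_{i_\xi}$, with $N'_{\xi+1}$ universal over $N'_\xi$, and with $N\cap M_{i_\xi}\subseteq N'_\xi$. At $\xi=0$, take $N'_0\prec_{\K}M_0$ an amalgamation base of cardinality $\kappa$ absorbing $N\cap M_0$ (Löwenheim--Skolem, then density of $\kappa$-amalgamation bases in $M_0$). At a limit $\xi$, set $N'_\xi:=\bigcup_{\eta<\xi}N'_\eta$, a $(\kappa,\xi)$-limit model, hence an amalgamation base by Hypothesis~\ref{hyp:limit}; the three requirements pass to the union using the chain axiom and continuity of $\langle M_i\rangle$. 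At a successor $\xi+1$, use Löwenheim--Skolem to find $P\prec_{\K}M_{i_{\xi+1}}$ of cardinality $\kappa$ containing $N'_\xi\cup(N\cap M_{i_{\xi+1}})$, enlarge $P$ to an amalgamation base $Q$ with $P\prec_{\K}Q\prec_{\K}M_{i_{\xi+1}}$ by density, pick $\alpha<\kappa^{+}$ with $Q\prec_{\K}M^{\alpha}_{i_{\xi+1}}$ (possible since the resolution is continuous of regular length $\kappa^{+}>\kappa=|Q|$), and set $N'_{\xi+1}:=M^{\alpha+1}_{i_{\xi+1}}$. This is an amalgamation base (a term of a limit-model resolution), it is universal over $M^{\alpha}_{i_{\xi+1}}$ and hence, since $N'_\xi\prec_{\K}Q\prec_{\K}M^{\alpha}_{i_{\xi+1}}$ and $N'_\xi$ is an amalgamation base, universal over $N'_\xi$; it satisfies $N'_\xi\prec_{\K}N'_{\xi+1}\prec_{\K}M_{i_{\xi+1}}$ and contains $N\cap M_{i_{\xi+1}}\subseteq Q$.

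Finally put $N':=\bigcup_{\xi<\delta}N'_\xi$. It is a $(\kappa,\delta)$-limit model, hence an amalgamation base by Hypothesis~\ref{hyp:limit}; it has cardinality $\kappa$; each $N'_\xi\prec_{\K}M_{i_\xi}\prec_{\K}M$ gives $N'\prec_{\K}M$; and every element of $N$ lies in some $M_{i_\xi}$ by cofinality, hence in $N\cap M_{i_\xi}\subseteq N'_\xi\subseteq N'$, so $N\prec_{\K}N'$ by coherence. Thus $N'$ is the desired amalgamation base.

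I expect the successor step to be the main obstacle: one must produce $N'_{\xi+1}$ that is simultaneously an amalgamation base, universal over $N'_\xi$ (so that the final union $N'$ is honestly a limit model, and therefore an amalgamation base), and large enough to absorb the part of $N$ appearing in $M_{i_{\xi+1}}$, all while remaining a $\prec_{\K}$-submodel of $M_{i_{\xi+1}}$. Corollary~\ref{sat is limit} is exactly what unlocks this, since it lets us work inside a concrete $(\kappa,\kappa^{+})$-limit resolution of $M_{i_{\xi+1}}$; the only further ingredient is the standard monotonicity fact that universality over an amalgamation base is inherited by $\prec_{\K}$-submodels lying below it. The case distinction on $\cf(\theta)$ is the other point requiring attention, but it is routine once one notes that $\cf(\theta)\leq\kappa^{+}$.
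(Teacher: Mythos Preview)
Your proof is correct and follows essentially the same approach as the paper: both build a $(\kappa,\cdot)$-limit model inside $M$ by threading an increasing chain of $\kappa$-sized amalgamation bases through the $M_i$'s (using that each $M_i$ is a $(\kappa,\kappa^{+})$-limit by Corollary~\ref{sat is limit}), absorbing $N$ piece by piece. Your version is slightly more careful than the paper's in two minor respects: you explicitly pass to a cofinal subsequence of length $\cf(\theta)\leq\kappa$ (ensuring the final union has cardinality $\kappa$ even when $|\theta|=\kappa^{+}$), and you treat $N\cap M_{i_\xi}$ as a set to be absorbed rather than as a $\prec_{\K}$-submodel; but the underlying argument is the same.
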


\begin{proof}
To see that $M$ is dense with amalgamation bases, let $N\prec_{\K}M$ have cardinality $\kappa$.  If there exists $i<\theta$ so that $N\prec_{\K}M_i$ then we are done since by our assumption, $M_i$ is dense with $\kappa$ amalgamation bases so there is $N'\prec_{\K}M_i\prec_{\K}M$ an amalgamation base of cardinality $\kappa$ extending $N$ as required.

So suppose that for each $i<\theta$, $N\bigcap M_i\neq N$.    Because each $M_i$ is saturated and dense with amalgamation bases, by Corollary \ref{sat is limit} each $M_i$ is a $(\kappa,\kappa^+)$-limit model.  This allows us to 
 construct an increasing and continuous sequence of amalgamation bases of cardinality $\kappa$,
$\langle N_i\mid i<\theta\rangle $, so that $N\bigcap M_i\prec_{\K}N_i\prec_{\K}M_i$ and $N_{i+1}$ is universal over $N_i$.  Notice that $\Union_{i<\theta}N_i$ lies in $M$, extends $N$, and is a limit model and hence an amalgamation base.

\end{proof}

Note that in Lemma \ref{union is dense} we cannot conclude outright that $\Union_{i<\theta}M_i$ is also saturated without assuming some superstability.

%%%%%%%%%%%%%%%%%%%%%%%%%%%%%%%%%%%%%%%%%%%%%%%%%%%%%%%%%%%%%%%%%%%%%%%%
\section{Symmetry and reduced towers}\label{sec:symmetry}
In this section we discuss the connection between the uniqueness of limit models and $\mu$-symmetry.  This is used to prove $\ref{assumption item}\Rightarrow\ref{limit item}$ of Theorem \ref{necessary theorem}.

For this section we make the following hypothesis:
\begin{hypothesis}\label{hypothesis symmetry section}
We assume that $\K$ is an abstract elementary class satisfying the following conditions for every $\kappa$ with $\LS(\K)\leq\kappa<\lambda$,
\begin{enumerate}

\item Limit models of cardinality $\kappa$ are amalgamation bases.
\item\label{limits exist hyp} For every limit ordinal $\theta<\kappa^+$ and every amalgamation base $N\in\K_\kappa$ there exists $M\in\K$ a $(\kappa,\theta)$-limit model over $N$.
\item $\K$ is $\kappa$-superstable.
\item The union of an increasing chain of limit models of cardinality $\kappa$ is an amalgamation base (Assumption \ref{new assumption}).
\end{enumerate}
\end{hypothesis}

We show that the arguments from \cite{Va-sym} and \cite{VV-transfer} can be carried out without the amalgamation property, if we assume only Hypothesis \ref{hypothesis symmetry section},  
to
prove
that reduced towers are continuous in categorical classes:
\begin{theorem}\label{reduced are continuous}
Suppose that Hypothesis \ref{hypothesis symmetry section} holds.
Suppose that  $\lambda$ and $\mu$ are cardinals so that there exists $0<n<\omega$ so that $\LS(\K)\leq\mu<\mu^{+n}=\lambda$. 
If $\K$ is categorical in $\lambda$, then reduced towers in $\K^*_{\mu,\alpha}$ are continuous if  $\alpha<\mu^+$.
\end{theorem}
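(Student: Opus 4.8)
The plan is to deduce the theorem from $\mu$-symmetry, which I would in turn extract from $\lambda$-categoricity in two stages. Throughout, the adaptations of the amalgamation-based arguments of \cite{Va-sym}, \cite{Va-union}, and \cite{VV-transfer} follow a uniform recipe. Wherever the original argument invokes the full amalgamation property to amalgamate two extensions of a model $M$, we instead work inside a fixed $(\chi,\chi^+)$-limit model $\C$ of the appropriate cardinality $\chi$ (produced by Fact \ref{limits are ab}), first replacing the models involved by amalgamation bases extending them inside $\C$ --- which is legitimate because, under Hypothesis \ref{hypothesis symmetry section}, limit models are amalgamation bases and limit extensions of amalgamation bases exist. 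Wherever the original argument forms a direct limit or takes a union along a continuous chain of limit models, the fact that the result is again an amalgamation base (and therefore embeds into $\C$) is exactly what Assumption \ref{new assumption} (item 4 of Hypothesis \ref{hypothesis symmetry section}) guarantees; in the tower setting this is packaged as Fact \ref{direct limit lemma}.

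First I would check that the argument of \cite{VV-transfer} deriving $\theta$-symmetry from categoricity in the successor $\theta^+$ survives this adaptation. Instantiating it with $\theta=\mu^{+(n-1)}$, so that $\theta^+=\mu^{+n}=\lambda$, and using $\lambda$-categoricity, this yields that $\K$ exhibits $\mu^{+(n-1)}$-symmetry. Then I would adapt the downward symmetry transfer of \cite{Va-union}: under Hypothesis \ref{hypothesis symmetry section} this should give that $\chi^+$-symmetry implies $\chi$-symmetry, using $\chi$- and $\chi^+$-superstability. Iterating this $n-1$ times --- starting from $\mu^{+(n-1)}$-symmetry and descending through $\mu^{+(n-2)},\dots,\mu^{+1}$ to $\mu$, with item 3 of Hypothesis \ref{hypothesis symmetry section} supplying superstability at each level --- produces $\mu$-symmetry. (When $n=1$ the first stage already gives $\mu$-symmetry and the transfer is vacuous.)

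With $\mu$-symmetry in hand, I would run the argument of \cite{Va-sym} that $\mu$-symmetry forces continuity of reduced towers of length $<\mu^+$. Fix $\alpha<\mu^+$ and suppose toward a contradiction that $\T=(\bar M,\bar a,\bar N)\in\K^*_{\mu,\alpha}$ is reduced but discontinuous at a limit ordinal $\beta<\alpha$, so that $\Union_{i<\beta}M_i\subsetneq M_\beta$. Choosing a witness $b\in M_\beta\setminus\Union_{i<\beta}M_i$ and using $\mu$-superstability to locate a model over which the relevant types do not $\mu$-split, one applies $\mu$-symmetry to the resulting configuration --- of the shape of Figure \ref{fig:sym} --- to obtain a limit model $M^b$ containing $b$ over which the pertinent non-splitting is preserved, and from $M^b$ one manufactures an extension $\T'\geq\T$ in $\K^*_{\mu,\alpha}$ with $M'_\gamma\cap\Union_{\delta<\alpha}M_\delta\supsetneq M_\gamma$ at some index $\gamma$, contradicting condition $(*)_\gamma$ in the definition of a reduced tower. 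The tower machinery needed --- the extension property for amalgamable towers (Fact \ref{extension for towers}), density of reduced towers (Fact \ref{density of reduced}), monotonicity under restriction (Fact \ref{monotonicity of towers}), and closure of continuous reduced towers under increasing unions (Fact \ref{union of reduced is reduced}) --- is all available under Assumption \ref{new assumption}.

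I expect the main obstacle to be the downward symmetry transfer: that argument manipulates limit models of two consecutive cardinalities simultaneously and leans most heavily on direct limit constructions, so the delicate point is to check, at each such construction, that one never amalgamates over a model that fails to be an amalgamation base and that the resulting direct limit lands in the relevant $\C$. Assumption \ref{new assumption}, applied to chains of limit models of cardinality $\mu^{+k}$ for $0\le k\le n-1$, is exactly what underwrites this, and it is for this reason that the theorem is stated for $\lambda=\mu^{+n}$ with $n$ finite rather than for an arbitrary $\lambda>\mu$.
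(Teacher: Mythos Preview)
Your proposal is correct and follows the same three-part architecture as the paper: derive symmetry just below the categoricity cardinal, transfer it down finitely many successors, and then invoke the implication ``$\mu$-symmetry $\Rightarrow$ reduced towers are continuous'' (the paper's Theorem \ref{symmetry reduced}).

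The one substantive divergence is in your first step. You propose adapting \cite{VV-transfer} to obtain $\mu^{+(n-1)}$-symmetry directly from categoricity in $\mu^{+n}$. The paper instead routes through \cite{Va-errata}: Theorem 2 there already establishes, in the no-amalgamation setting, that categoricity in $\theta^+$ forces reduced towers of cardinality $\theta$ to be continuous; the paper then proves the \emph{converse} direction (Theorem \ref{converse}: continuous reduced towers $\Rightarrow$ $\theta$-symmetry) and combines the two to get Corollary \ref{categoricity symmetry down one}. This buys the paper an off-the-shelf no-amalgamation argument for the base case, whereas the paper explicitly defers a full adaptation of \cite{VV-transfer} to later work. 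Note also that the converse direction is not dispensable in the paper's treatment: the proof of the downward transfer (Theorem \ref{categoricity implies symmetry}, the analogue of the \cite{Va-union} argument you cite) opens by assuming the failure of $\mu$-symmetry and invoking Theorem \ref{converse} to produce a reduced discontinuous tower. So even on your route you will need both directions of the symmetry/continuity equivalence, not just the forward one you highlight in your final paragraph.
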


\begin{proof}
When we take $n=1$, Theorem \ref{reduced are continuous} reduces to Theorem 2 of \cite{Va-errata}.
The case $n>1$ of Theorem \ref{reduced are continuous} is proved by first showing that $\mu$-symmetry implies that reduced towers are continuous (Theorem \ref{symmetry reduced}) and then deriving   $\mu$-symmetry from categoricity in $\lambda=\mu^{+n}$ (Theorem \ref{categoricity implies symmetry}).  
\end{proof}

The remainder of the section is dedicated to prove
the two theorems referenced in the proof of Theorem \ref{reduced are continuous}.
 In order to prove Theorem \ref{categoricity implies symmetry}, we need the converse of Theorem \ref{symmetry reduced}.  
We begin by establishing the equivalence of $\mu$-symmetry and the statement that reduced towers of cardinality $\mu$ are continuous (Theorem \ref{symmetry reduced} and its converse Theorem \ref{converse}).  
Then we finish the section by proving that $\mu$-symmetry can be derived from categoricity in $\mu^{+n}$ for some $0<n<\omega$ (Theorem \ref{categoricity implies symmetry}).

\begin{theorem}[Adaptation of Theorem 5 of \cite{Va-sym}]\label{symmetry reduced}
Suppose that Hypothesis \ref{hypothesis symmetry section} holds and that Assumption \ref{new assumption} holds.
If $\K$ has symmetry for non-$\mu$-splitting, then for $(\bar M,\bar a,\bar N)\in\K^*_{\mu,\alpha}$ a reduced tower, we can conclude that $\bar M$ is a continuous sequence (i.e. for every limit ordinal $\beta<\alpha$, we have $M_\beta=\Union_{\gamma<\beta}M_\gamma$).

\end{theorem}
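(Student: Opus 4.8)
The plan is to argue by contradiction, adapting the proof of Theorem~5 of \cite{Va-sym} to the present setting where only partial amalgamation is available. The whole argument is carried out inside the fixed $(\mu,\mu^+)$-limit model $\C$, and every place where \cite{Va-sym} invokes the amalgamation property is to be replaced by the observation that the model in question is an amalgamation base --- which, under Hypothesis~\ref{hypothesis symmetry section} and Assumption~\ref{new assumption}, holds for all limit models of cardinality $\mu$ (Fact~\ref{limits are ab}) and, decisively, for the union of any increasing continuous chain of such models.

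So, suppose $\T=(\bar M,\bar a,\bar N)\in\K^*_{\mu,\alpha}$ is reduced and some limit ordinal witnesses a discontinuity; let $\beta<\alpha$ be the least such. By Fact~\ref{monotonicity of towers}, $\T\restriction(\beta+1)$ is again reduced, so I may assume $\alpha=\beta+1$. By the minimality of $\beta$ the chain $\langle M_\gamma\mid\gamma<\beta\rangle$ is continuous, so $M^-:=\bigcup_{\gamma<\beta}M_\gamma$ is the union of an increasing continuous chain of limit models of cardinality $\mu$ and is therefore an amalgamation base by Assumption~\ref{new assumption}; moreover $M^-\subsetneq M_\beta$. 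I fix $b\in M_\beta\setminus M^-$, so $\tp(b/M^-)$ is non-algebraic. By $\mu$-superstability (Definition~\ref{ss defn}), after the standard preliminary adjustments to $\T$ used in \cite{Va-sym} (now performed inside $\C$), I obtain an ordinal $i_0<\beta$ and an amalgamation base $M$ that is a limit model over $M_{i_0}$, is universal over $M_{i_0}$, contains $a_{i_0}$, and such that $M_{i_0}$ is a limit model over $N_{i_0}$ and $\tp(b/M)$ is non-algebraic and does not $\mu$-split over $M_{i_0}$.

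Next I apply $\mu$-symmetry (Definition~\ref{sym defn}) with $M_0:=M_{i_0}$, $N:=N_{i_0}$, $a:=a_{i_0}$, $b:=b$, and the model $M$ just produced: conditions~\ref{limit sym cond}--\ref{last} hold by the previous paragraph together with the defining properties of $\T$ at index $i_0$ (the non-$\mu$-splitting of $\tp(a_{i_0}/M_{i_0})$ over $N_{i_0}$). Symmetry yields a limit model $M^b$ over $M_{i_0}$ with $b\in M^b$ and $\tp(a_{i_0}/M^b)$ not $\mu$-splitting over $N_{i_0}$. Using $M^b$ I then build a $\leq$-extension $\T'=(\bar M',\bar a,\bar N)\geq\T$ in $\C$ with $M^b\prec_{\K}M'_{i_0+1}$: the model $M'_{i_0+1}$ is obtained by amalgamating $M^b$ and $M_{i_0+1}$ over the amalgamation base $M_{i_0}$ inside $\C$ and, if necessary, enlarging it to be universal over $M_{i_0+1}$, and the clause that $\tp(a_{i_0}/M^b)$ does not $\mu$-split over $N_{i_0}$ is precisely what makes this amalgam a legitimate tower model; the remaining indices are then filled in by the tower-extension machinery (Fact~\ref{extension for towers}, Fact~\ref{direct limit lemma}, and the extension and uniqueness of non-$\mu$-splitting), all available under Assumption~\ref{new assumption}. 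Since $\alpha=\beta+1$, we have $\bigcup_{\gamma<\alpha}M_\gamma=M_\beta$, so $b\in M'_{i_0+1}\cap\bigcup_{\gamma<\alpha}M_\gamma$; but $i_0+1<\beta$ and $b\notin M^-\supseteq M_{i_0+1}$, so $b\notin M_{i_0+1}$, contradicting $(*)_{i_0+1}$ for the reduced tower $\T$.

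I expect the main obstacle to be the amalgamation-base bookkeeping: without the full amalgamation property, every amalgamation and every direct limit in the construction of $\T'$ must take place over an amalgamation base and must be checked to remain inside $\C$, so at each limit stage of $\T'$, as well as when forming $M^-$ and when invoking $\mu$-superstability, one must verify that the relevant union is a limit model or is covered by Assumption~\ref{new assumption}. A secondary point is to confirm that the version of $\mu$-symmetry in Definition~\ref{sym defn}, formulated in terms of amalgamation bases, is strong enough to run the argument of \cite{Va-sym} verbatim; since Definition~\ref{sym defn} was stated with amalgamation bases in mind, I expect this to require only the cosmetic changes indicated above.
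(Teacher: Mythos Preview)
Your setup --- reducing to $\alpha=\beta+1$ with the discontinuity at the limit $\beta$, observing that $M^-=\bigcup_{\gamma<\beta}M_\gamma$ is an amalgamation base via Assumption~\ref{new assumption}, and fixing $b\in M_\beta\setminus M^-$ --- matches the paper's. The genuine gap is the step you summarize as ``the remaining indices are then filled in by the tower-extension machinery.'' A single application of $\mu$-symmetry yields $M^b\ni b$ with $\tp(a_{i_0}/M^b)$ not $\mu$-splitting over $N_{i_0}$, but a tower extension $\T'\geq\T$ of length $\alpha$ requires that $\tp(a_\zeta/M'_\zeta)$ does not $\mu$-split over $N_\zeta$ for \emph{every} $\zeta<\beta$. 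Once $M'_{i_0+1}$ is enlarged to contain $M^b$ (and hence $b$), you have no control whatsoever over $\tp(a_{i_0+1}/M'_{i_0+1})$, and Fact~\ref{extension for towers} does not help: it produces a $<$-extension of a given amalgamable tower, not an extension of $\T$ that is simultaneously forced to contain a prescribed model at a prescribed index. Your remark that ``$\tp(a_{i_0}/M^b)$ does not $\mu$-split over $N_{i_0}$ is precisely what makes this amalgam a legitimate tower model'' conflates the tower condition at index $i_0$ (which concerns $\tp(a_{i_0}/M'_{i_0})$ and is automatic when $M'_{i_0}=M_{i_0}$) with the conditions at indices $\geq i_0+1$, which are the actual obstruction.

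The paper handles this by a substantially longer argument. It first builds a $<$-increasing continuous chain $\langle\T^i\mid i<\beta\rangle$ of reduced continuous towers in $\K^*_{\mu,\beta}$ extending $\T\restriction\beta$ (available by the minimality of $\beta$ together with Facts~\ref{density of reduced} and~\ref{union of reduced is reduced}), forms the union $M^\beta_\beta$, and --- in the nontrivial case $b\notin M^\beta_\beta$ --- locates $i^*$ with $\tp(b/M^\beta_\beta)$ not $\mu$-splitting over the diagonal model $M^{i^*}_{i^*}$. It then applies symmetry not once but at \emph{every} successor stage $j+1>i^*$ to obtain models $M^b_{j+1}\ni b$ with $\tp(a_{j+1}/M^b_{j+1})$ not $\mu$-splitting over $N_{j+1}$, and glues these together through a directed system $\langle\mathring\T^j,f_{j,k}\mid i^*+2\leq j\leq k\leq\beta\rangle$. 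The limit stages of this system are delicate: one takes a direct limit via Fact~\ref{direct limit lemma}, then uses the uniqueness of non-splitting extensions to show the image of $b$ has the same type as $b$ over the relevant union, and composes with an automorphism of $\C$ to bring $b$ back into the tower. This repeated-symmetry-plus-directed-system construction is the heart of the proof and is precisely what your proposal elides.
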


\begin{proof}
Suppose $\K$ has symmetry for non-$\mu$-splitting, but reduced towers are not necessarily continuous.
 Let $(\bar M,\bar a,\bar N)\in\K^*_{\mu,\alpha}$ be a discontinuous reduced tower  in $\C$ of minimal length, $\alpha$.  Notice that by Fact \ref{monotonicity of towers}, we can conclude that $\alpha=\delta+1$ for some limit ordinal $\delta$ and that the failure of continuity must occur at $\delta$.
Let $b\in M_\delta\backslash \Union_{\gamma<\delta}M_\gamma$ witness the discontinuity of the tower.  By Assumption \ref{new assumption}, $\Union_{\gamma<\delta}M_\gamma$ must be an amalgamation base.

By the minimality of $\alpha$ and the density of reduced towers (Fact \ref{density of reduced} and Fact \ref{union of reduced is reduced}) we can construct a $<$-increasing and continuous chain of reduced, continuous towers $\langle \T^i=(\bar M,\bar a,\bar N)^i\in\K^*_{\mu,\delta}\mid i<\delta\rangle$ with $(\bar M,\bar a,\bar N)^0:=(\bar M,\bar a,\bar N)\restriction \delta$ inside $\C$. By $\delta$-applications of Fact \ref{density of reduced} inbetween successor stages of the construction we can require that for $\beta<\delta$
\begin{equation}\label{limit at successor}
M^{i+1}_{\beta}\text{ is a }(\mu,\delta)\text{-limit over }N_{\beta}.
\end{equation}
Let $\displaystyle{M^\delta_\delta:=\Union_{i<\delta,\;\beta<\delta}M^i_\beta}$.  See Figure \ref{fig:Mdeltas}.

\begin{figure}[h]
\begin{tikzpicture}[rounded corners=5mm,scale =2.9,inner sep=.5mm]
\draw (0,1.5) rectangle (.75,.5);
\draw (0,1.5) rectangle (1.75,1);
\draw (.25,.75) node {$N_0$};
\draw (1.25,1.25) node {$N_\beta$};
\draw (0,0) rectangle (4,1.5);
\draw (0,1.5) rectangle (3.5,-2);
\draw (.85,.25) node {$M_0$};
\draw(1.25,.25) node {$M_1$};
\draw (1.75,.25) node {$\dots M_\beta$};
\draw (2.3,.25) node {$M_{\beta+1}$};
\draw (3.15,.2) node {$\dots\displaystyle{\Union_{\gamma<\delta}M_\gamma}$};
\draw (3.85, .25) node {$M_\delta$};
\draw (-.5,.25) node {$(\bar M,\bar a,\bar N)$};
\draw (0,1.5) rectangle (3.5, -.4);
\draw (.85,-.15) node {$M^{1}_0$};
\draw (1.75,-.15) node {$\dots M^{1}_\beta$};
\draw (2.3,-.15) node {$M^{1}_{\beta+1}$};
\draw(1.25,-.15) node {$M^{1}_1$};
\draw (3.15,-.2) node {$\dots\displaystyle{\Union_{\gamma<\delta}M^{1}_\gamma}$};
\draw (-.5,-.15) node {$(\bar M,\bar a,\bar N)^1$};
\draw (.85,-.6) node {$\vdots$};
\draw (1.75,-.6) node {$\vdots$};
\draw (2.35,-.6) node {$\vdots$};
\draw (3.2,-.6) node {$\vdots$};
\draw (0,1.5) rectangle (3.5, -1);
\draw (.85,-.85) node {$M^{j}_0$};
\draw (1.75,-.85) node {$\tiny{\dots} M^{j}_\beta$};
\draw (2.3,-.85) node {$M^{j}_{\beta+1}$};
\draw (3,-.85) node {$\dots\Union_{\gamma<\delta}M^{j}_\gamma$};
\draw (-.5,-.85) node {$(\bar M,\bar a,\bar N)^j$};
\draw (0,1.5) rectangle (3.5, -1.35);
\draw (0,1.5) rectangle (1,-2);
\draw(0,1.5) rectangle (1.5, -2);
\draw (0,1.5) rectangle (2.5, -2);
\draw (0,1.5) rectangle (2,-2);
\draw (0,1.5) rectangle (3.5, -2);
\draw (.8,-1.15) node {$M^{j+1}_0$};
\draw (1.8,-1.15) node {$ M^{j+1}_\beta$};
\draw (2.3,-1.15) node {$M^{j+1}_{\beta+1}$};
\draw (3,-1.2) node {$\dots\Union_{\gamma<\delta}M^{j+1}_\gamma$};
\draw (-.5,-1.15) node {$(\bar M,\bar a,\bar N)^{j+1}$};
\draw (.85,-1.6) node {$\vdots$};
\draw (1.75,-1.6) node {$\vdots$};
\draw (2.35,-1.6) node {$\vdots$};
\draw (3.2,-1.6) node {$\vdots$};
\node at (3.75,.75)[circle, fill, draw, label=90:$b$] {};
\node at (2.25,.75)[circle, fill, draw, label=290:$a_\beta$] {};
\node at (1.1,.75)[circle, fill, draw, label=290:$a_1$] {};
\draw (3.75,-1.75) node {$M^\delta_\delta$};
\end{tikzpicture}
\caption{$(\bar M,\bar a,\bar N)$ and the  towers $(\bar M,\bar a,\bar N)^j$ extending $(\bar M,\bar a,\bar N)\restriction\delta$ inside $\C$.} \label{fig:Mdeltas}
\end{figure}

There are two cases: $1)$ we have $b\in M^\delta_\delta$ and $2)$ we have $b\notin M^\delta_\delta$.  
If $b\in M^\delta_\delta$, then we will have found an extension of $(\bar M,\bar a,\bar N)\restriction\delta$  containing $b$ (namely $(\bar M,\bar a,\bar N)^\delta)$) which can easily be lengthened to a discontinuous extension of the entire $(\bar M,\bar a,\bar N)$ tower by taking the $\delta^{th}$ model to be some extension of $M^\delta_\delta$ which is also universal over  $M_\delta$.  This is possible because we have constructed $M^\delta_\delta$ so that it lies in $\C$ along with $M_\delta$.
This discontinuous extension of  $(\bar M,\bar a,\bar N)$ along with $b$ witness that $(\bar M,\bar a,\bar N)$ cannot be reduced.

So suppose that $b\notin M^\delta_\delta$.  Since $M^\delta_\delta$ is a limit model and hence an amalgamation base, we can consider the non-algebraic type 
$\tp(b/M^\delta_\delta)$.
By the $\mu$-superstability assumption, there exists $i^*<\alpha$ so that $\tp(b/M^\delta_\delta)$ does not $\mu$-split over $M^{i^*}_{i^*}$.  By monotonicity of non-splitting, we may assume that $i^*$ is a successor and thus by $(\ref{limit at successor})$, $M^{i^*}_{i^*}$ is a $(\mu,\delta)$-limit over $N_{i^*}$.
Now, referring to the Figure \ref{fig:sym}, apply symmetry to $a_{i^*}$ standing in for $a$, $M^{i^*}_{i^*}$ representing  $M_0$,  $N_{i^*}$ as $N$, $M^\delta_\delta$ as $M$, and $b$ as itself.  We can conclude that there exists  $M^b$ containing $b$, a limit model over $M^{i^*}_{i^*}$, for which $tp(a_{i^*}/M^b)$ does not $\mu$-split over $N_{i^*}$.  

Our next step is to consider the tower formed by the diagonal elements in Figure \ref{fig:Mdeltas}.  In particular let $\T^{diag}$ be the 
tower in $\K^*_{\mu,\delta}$ extending $\T\restriction \delta$ whose models are $M^i_i$ for each $i<\delta$.

Define the tower $\T^b\in\K^*_{\mu,i^*+2}$ by the sequences $\bar a\restriction (i^*+1)$, $\bar N\restriction (i^*+1)$ and $\bar M'$ with $M'_j:=M^j_j$ for $j\leq i^*$ and $M'_{i^*+1}:=M^b$.  Notice that $\T^b$ is an extension of $\T^{diag}\restriction(i^*+2)$ containing $b$.  We will explain how we can use this tower to find a tower $\mathring\T^\delta\in\K^*_{\mu,\delta}$ extending $\T^{diag}$ with $b\in \Union_{j<\delta}\mathring M^\delta_{j}$.  This will be enough to contradict our assumption that $\T$ was reduced.

We define  $\langle \mathring\T^j, f_{j,k}\mid i^*+2\leq j\leq k\leq\delta\rangle$ a directed system of towers so that for $j \geq i^*+2$
\begin{enumerate}
\item\label{base} $\mathring\T^{i^*+2}=\T^b$
\item for $j\leq\delta$, $\mathring\T^j\in\K^*_{\mu,j}$ and lies in $\C$
\item $\T^{diag}\restriction j \leq\mathring\T^j$ for $j\leq\delta$
\item $f_{j,k}(\mathring\T^j)\leq\mathring\T^k\restriction j$ for $j\leq k<\delta$
\item\label{id condition} $f_{j,k}\restriction M^{j}_j=id_{M^{j}_j}$ $j\leq k<\delta$
\item\label{limit M'} $\mathring M^{j+1}_{j+1}$ is universal over $f_{j,j+1}(\mathring M^j_j)$ for $j<\delta$
\item\label{b in} $b\in\mathring M^{j}_{j}$ for $j\leq\delta$
\item\label{non splitting} $\tp(f_{j,k}(b)/M^{k}_{k})$ does not $\mu$-split over $M^{i^*}_{i^*}$ for $j<k<\delta$.
\end{enumerate}

We will define this directed system by induction on $k$, with $i^*+2\leq k\leq\alpha$.  The base case $i^*+2$ is determined by condition \ref{base}.  To cover the successor case, suppose that 
$k=j+1$.  
By our choice of $i^*$, we have $\tp(b/\Union_{l<\alpha}M^{l}_l)$ does not $\mu$-split over $M^{i^*}_{i^*}$. 
So in particular by monotonicity of non-splitting, we notice:
\begin{equation}\label{Mjj non-split}
\tp(b/M^{j+1}_{j+1})\text{ does not }\mu\text{-split over }M^{i^*}_{i^*}.
\end{equation} 
 Using the definition of towers, the choice of $i^*$, and the fact that $M^{j+1}_{j+1}$ was chosen to be a $(\mu,\delta)$-limit over $N_{j+1}$, we can apply symmetry to $a_{j+1}$, $M^{j+1}_{j+1}$, $ \Union_{l<\delta}M^{l}_l$, $b$ and $N_{j+1}$ which will yield $M^b_{j+1}$ a  limit model over $M^{j+1}_{j+1}$ containing $b$ 
 so that $\tp(a_{j+1}/M^b_{j+1})$ does not $\mu$-split over $N_{j+1}$ (see Figure \ref{fig:successor}).

\begin{figure}[h]
\begin{tikzpicture}[rounded corners=5mm, scale=3,inner sep=.5mm]
\draw (0,1.25) rectangle (.75,.5);
\draw (.25,.75) node {$N_{j+1}$};
\draw (0,0) rectangle (3,1.25);
\draw (0,1.25) rectangle (1,0);
\draw (.8,.25) node {$M^{j+1}_{j+1}$};
\draw (3.35, .25) node {$\Union_{l<\delta}M^{l}_l$};
\draw[color=gray] (0,1.25) rectangle (1.5, -.5);
\node at (1.1,-.25)[circle, fill, draw, label=45:$b$] {};
\node at (2,.75)[circle, fill, draw, label=45:$a_{j+1}$] {};
\draw[color=gray] (1.75,-.25) node {$M^{b}_{j+1}$};
\end{tikzpicture}
\caption{A diagram of the application of symmetry in the successor stage of the directed system construction in the proof of Theorem \ref{reduced are continuous}. We have $\tp(b/ \Union_{l<\delta}M^{l}_l)$ does not $\mu$-split over $M^{j+1}_{j+1}$ and $\tp(a_{j+1}/M^{j+1}_{j+1})$ does not $\mu$-split over $N_{j+1}$.  Symmetry implies the existence of  $M^b$ a limit model over $M^{j+1}_{j+1}$ so that $\tp(a_{j+1}/M^b)$  does not $\mu$-split over $N_{j+1}$.} \label{fig:successor}
\end{figure}

Fix $M'$ to be a model of cardinality $\mu$ extending  both $\mathring M^j_j$ and $M^{j+1}_{j+1}$
Since $M^b_{j+1}$ is a limit model over $M^{j+1}_{j+1}$, there exits $f_{j,j+1}:M'\rightarrow M^b_{j+1}$ with $f_{j,j+1}=id_{M^{j+1}_{j+1}}$ so that $M^b_{j+1}$ is also universal over $f_{j,j+1}(\mathring M^j_j)$.  Notice that condition \ref{non splitting} of the construction is satisfied because of (\ref{Mjj non-split}), invariance, and our choice of $f_{j,j+1}\restriction M_{j+1}^{j+1}=\id$.
Therefore, it is easy to check that $\mathring \T^{j+1}$ defined by the models $\mathring M^{j+1}_l:=f_{j,j+1}(\mathring M^j_l)$ for $l\leq j$ and $\mathring M^{j+1}_{j+1}:=M^b_{j+1}$ are as required.
Then the rest of the directed system can be defined by the induction hypothesis and the mappings $f_{l,j+1}:=f_{l,j}\circ f_{j,j+1}$ for $i^*+2\leq l<j$.

Now consider the limit stage  $k$ of the construction.  First,  let $\grave \T^k$ and $\langle\grave f_{j,k}\mid i^*+2\leq j<k\rangle$ be a direct limit of the system defined so far.  We use the $\grave{}$ notation since these are only approximations to the tower and mappings that we are looking for.  We will have to take some care to find a direct limit that contains $b$ in order to satisfy Condition \ref{b in} of the construction.
By Assumption \ref{new assumption},  our induction hypothesis, and Fact \ref{direct limit lemma}, we may choose this direct limit to lie in $\C$ so that for all $j<k$
\begin{equation*}
\grave f_{j,k}\restriction M^{j}_j=id_{M^{j}_j}.
\end{equation*}
Consequently $\grave M^\alpha_j:=\grave f_{j,k}(\mathring M^j_j)$ is universal over $M^{j}_j$, and $\Union_{j<k}\mathring M^k_j$ is a limit model witnessed by condition \ref{limit M'} of the construction.  Additionally, because $\T^{diag}\restriction k$  is continuous,
 the tower  $\grave\T^k$   composed of the models $\grave M^k_j$, extends $\T^{diag}\restriction k$.

We will next show that for every $j<k$,
\begin{equation}\label{limit non split eqn}
\tp(\grave f_{i^*+2,k}(b)/M^j_j)\text{ does not }\mu\text{-split over }M^{i^*}_{i^*}.
\end{equation}
To see this, recall that for every $j<k$, by the definition of a direct limit, $\grave f_{i^*+2,k}(b)=\grave f_{j,k}(f_{i^*+2,j}(b))$.
By condition \ref{non splitting} of the construction, we know
\begin{equation*}
\tp(f_{i^*+2,j}(b)/M^{j}_{j})\text{ does not }\mu\text{-split over }M^{i^*}_{i^*}.
\end{equation*}
Applying $\grave f_{j,k}$ to this implies $\tp(\grave f_{i^*+2,k}(b)/M^j_j)$ does not $\mu$-split over $M^{i^*}_{i^*}$, establishing $(\ref{limit non split eqn})$.

Because $M^{j+1}_{j+1}$ is universal over $M^j_j$ by construction, we can apply
our assumption of $\mu$-superstability to $(\ref{limit non split eqn})$ yielding
\begin{equation}\label{grave f}
\tp(\grave f_{i^*+2,k}(b)/\Union_{j<k}M^j_j)\text{ does not }\mu\text{-split over }M^{i^*}_{i^*}.
\end{equation}

Because $\grave f_{i^*+2,k}$ fixes $M^{i^*+1}_{i^*+1}$,  $\tp(b/M^{i^*+1}_{i^*+1})=\tp(\grave f_{i^*+2,k}(b)/M^{i^*+1}_{i^*+1})$.
We can then apply the uniqueness of non-splitting extensions to $(\ref{grave f})$ to see that  $\tp(\grave f_{i^*+2,k}(b)/\Union_{j<k}M^j_j)=\tp(b/\Union_{j<k}M^j_j)$.  Thus we can fix $g$ an automorphism of $\C$ fixing $\Union_{j<k}M^j_j$ so that $g(\grave f_{i^*+2,k}(b))=b.$

We will then define $\mathring \T^k$ to be the tower $g(\grave\T^k)$ and the mappings for our directed system will be $f_{j,k}:=g\circ\grave f_{j,k}$ for all $ i^*+2\leq j<k$.
This completes the construction.

Now that we have $\mathring\T^\delta$ a tower extending $\T\restriction\delta$ which contains $b$, we are in a situation 
similar to the proof in case $1)$.  
To contradict that $\T$ is reduced, 
we need only 
lengthen $\mathring\T^\delta$ to a discontinuous extension of the entire $(\bar M,\bar a,\bar N)$ tower by taking the $\delta^{th}$ model to be some extension of $\Union_{i<\delta}\mathring M^i_i$ which is also universal over  $M_\delta$.  This is possible because all the models lie in $\C$.  This discontinuous extension of  $(\bar M,\bar a,\bar N)$ along with $b$ witness that $(\bar M,\bar a,\bar N)$ cannot be reduced.

\end{proof}

Next we adapt the proof of Theorem 5 of \cite{Va-sym} to prove the converse of Theorem \ref{symmetry reduced}.
\begin{theorem}[Adaptation of Theorem 5 of \cite{Va-sym}]\label{converse}
Suppose that Hypothesis \ref{hypothesis symmetry section} holds.
If  every $(\bar M,\bar a,\bar N)\in\K^*_{\mu,\alpha}$ reduced tower is  continuous (i.e. for every limit ordinal $\beta<\alpha$, we have $M_\beta=\Union_{i<\beta}M_i$), then $\K$ has symmetry for non-$\mu$-splitting.

\end{theorem}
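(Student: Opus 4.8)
The plan is to prove the contrapositive in the style of Theorem 5 of \cite{Va-sym}: assuming $\K$ fails symmetry for non-$\mu$-splitting, I will build a reduced tower in some $\K^*_{\mu,\alpha}$ that is discontinuous. So suppose symmetry fails. Then there are models $M,M_0,N\in\K_\mu$ and elements $a,b$ witnessing conditions \ref{limit sym cond}--\ref{last} of Definition \ref{sym defn} for which \emph{no} limit model $M^b$ over $M_0$ containing $b$ has $\tp(a/M^b)$ non-$\mu$-splitting over $N$. The idea is to package this failure into a short tower: take $\alpha=\omega$ (or some small limit), put $N_0:=N$, $M_0$ in the $0$th slot and $a_0:=a$, so that $\tp(a_0/M_0)$ does not $\mu$-split over $N_0$ and $M_0$ is universal over $N_0$ (after possibly shrinking/thickening to arrange $M_0$ is a limit model over $N$, which is available by Hypothesis \ref{hypothesis symmetry section}\ref{limits exist hyp}). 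Then I extend this tower to one of length $\omega$ whose first model is universal over $M_0\cup\{b\}$ (or more precisely contains $b$), using the extension property for towers (Fact \ref{extension for towers}) together with density of reduced towers (Fact \ref{density of reduced}) to pass to a reduced extension.

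The heart of the argument is the following dichotomy, exactly as in \cite{Va-sym}: take a reduced tower $\T=(\bar M,\bar a,\bar N)\in\K^*_{\mu,\omega}$ (say) extending the initial data with $M_1$ universal over $M_0$, $b\in M_1$, and $a_0=a$, $N_0=N$. If $\T$ is discontinuous we are done. If $\T$ is continuous, then $M_\omega:=\Union_{i<\omega}M_i$ is an amalgamation base (Assumption \ref{new assumption} is in force via Hypothesis \ref{hypothesis symmetry section}) containing $b$, and by $\mu$-superstability $\tp(a_0/M_\omega)$ does not $\mu$-split over some $M_i$; monotonicity of the tower (and reduction, via Fact \ref{monotonicity of towers}) lets me arrange this data so that $M_\omega$ is in fact a limit model over $M_0$ containing $b$ with $\tp(a/M_\omega)$ non-$\mu$-splitting over $N$ --- contradicting the choice of the witnesses to the failure of symmetry. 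More carefully, I should set up the tower so that $M_1$ is a limit model over $M_0$ and the continuity of the reduced tower forces $b$ to already live below the union in a controlled way; the continuous reduced tower then produces exactly the object symmetry would have produced.

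So the skeleton is: (1) unpack failure of symmetry into tower data $(N, M_0, a, b)$; (2) build a reduced tower $\T\in\K^*_{\mu,\omega}$ (or $\K^*_{\mu,\alpha}$ for a chosen small limit $\alpha$) extending this data, using Fact \ref{extension for towers}, Fact \ref{density of reduced}, and Fact \ref{union of reduced is reduced}, so that $b$ appears in an early model and $a$ is the relevant designated element; (3) invoke Theorem \ref{reduced are continuous} --- or rather the hypothesis of \emph{this} theorem that reduced towers in $\K^*_{\mu,\alpha}$ are continuous --- to conclude $\T$ is continuous; (4) apply $\mu$-superstability to $\tp(a/M_\alpha)$ to locate a non-$\mu$-splitting base, and combine with continuity to exhibit a limit model over $M_0$ containing $b$ over which $\tp(a/-)$ does not $\mu$-split; (5) this contradicts the failure of symmetry, so symmetry must hold.

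The main obstacle I expect is step (2), the bookkeeping needed to guarantee that the reduced tower one produces genuinely encodes the symmetry failure: one must ensure $b$ lands in a model of the tower in a position where continuity of the union, plus $\mu$-superstability, actually yields a \emph{limit model over $M_0$} (not just over some later $N_i$) containing $b$ with the non-splitting property over $N$. This is where care is needed in choosing $\alpha$, in arranging that $M_0$ with its designated submodel $N$ sits at the bottom, and in using reduction (via $(*)_\beta$ of Definition \ref{reduced defn} and Fact \ref{monotonicity of towers}) to keep $b$ from escaping into extensions --- matching precisely the argument structure of Theorem 5 of \cite{Va-sym}, now checked to go through with only the limited amalgamation of Hypothesis \ref{hypothesis symmetry section} rather than full amalgamation.
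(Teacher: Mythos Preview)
Your proposal has a genuine structural gap: you place $b$ in $M_1$ near the bottom of the tower, but this creates no discontinuity and makes step (4) incoherent. Since $a=a_0\in M_1\setminus M_0$ by the definition of a tower, $a$ already lies in $M_\alpha=\bigcup_{i<\alpha}M_i$, so $\tp(a/M_\alpha)$ is algebraic and applying $\mu$-superstability to it yields nothing. More fundamentally, the tower structure controls only $\tp(a_i/M_i')$ for each $i$ separately; it says nothing about $\tp(a_0/M_j')$ for $j>0$, so even if $b\in M_1'$ you cannot extract from the tower any non-splitting statement about $\tp(a/M^b)$ for a model $M^b$ containing $b$.

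The paper's argument runs the construction the other way around and is direct rather than by contrapositive. Given the symmetry data $N,M_0,M,a,b$, write $M$ as a limit model $\bigcup_{i<\delta}M_i$ over $M_0$ with $a\in M_1$, and then build a tower $\T$ of length $\delta+1$ by placing $b$ in the \emph{top} model $M_\delta$ (a limit model over $M$ containing $b$). The crucial trick is the choice of designated elements: set $a_0:=a$ and for $0<i<\delta$ let $a_i$ realize the unique non-$\mu$-splitting extension of $\tp(a/M_0)$ to $M_i$, with $N_i:=N$ for all $i$. Since $b\notin M=\bigcup_{i<\delta}M_i$, this tower is discontinuous at $\delta$; hence by hypothesis it is not reduced, and one passes to a reduced continuous extension $\T'$. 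Continuity forces $b\in M'_{j^*}$ for some $j^*<\delta$. If $j^*=0$ then $M'_0$ is the desired $M^b$. If $j^*>0$, the tower gives $\tp(a_{j^*}/M'_{j^*})$ non-$\mu$-splitting over $N$, and because $a_{j^*}$ was deliberately chosen to realize the non-splitting extension of $\tp(a/M_0)$, one can use an automorphism sending $a_{j^*}\mapsto a$ over $M_0$ (together with the non-splitting of $\tp(b/M)$) to transport $M'_{j^*}$ to the required $M^b$. Your sketch is missing both the placement of $b$ at the top and the device of populating all levels with copies $a_i$ of $a$, which is what makes the transfer in the $j^*>0$ case possible.
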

\begin{proof}
Suppose that $M$ is an amalgamation base and universal over $M_0$ and that $M_0$ is a limit model over $N$ and so that all these models lie in $\C$.
 Fix $b$ so that the non-algebraic $\tp(b/M)$ does not $\mu$-split over $N$ with $b\in\C$.  Fix $a\in M\backslash M_0$. Without loss of generality, by monotonicity of non-splitting, we may assume that $M$ is a limit model over $M_0$.  Let $\langle M_i\mid i<\delta\rangle$ witness this. We can arrange that $M_{i+1}$ is a limit model over $M_i$ and $a\in M_1$.  To prove $\mu$-symmetry, we will find $M^b$ a limit model over $M_0$ containing $b$ and extending $N$ so that $\tp(a/M^b)$ does not $\mu$-split over $N$.  

We start by  building a tower of length $\delta+1$.  We'll use the models in the sequence $\langle M_i\mid i<\delta\rangle$ as the first part of the tower and we'll define $M_{\delta}$ to be some limit model extending $M$ containing $b$.  
We will set $a_0:=a$ and for $0<i<\delta$ we can choose $a_i\in M_{i+1}\backslash M_i$ realizing the extension of $\tp(a/M_0)$ to $M_i$ that does not $\mu$-split over $N$.
Then set  $N_i:=N$ for each $i$.  Refer to the tower of length $\delta+1$ defined this way as $\mathcal T$. 

Notice that $\mathcal T$ is discontinuous at $\delta$; therefore by our assumption, it is not reduced.  
However at this place of discontinuity, $\Union_{i<\delta}M_i$ is a limit model and hence an amalgamation base.  Therefore $\mathcal T$ is amalgamable.
By the $\mu$-superstability assumptions, our assumption that reduced towers are continuous,  and Fact \ref{density of reduced}, we can find $\mathcal T'$ in $\C$ extending $\mathcal T$ that is reduced, and continuous.  By the continuity of this tower, since $b$ appears in the tower, there exists $j<\delta$ so that $b\in M'_{j}$.  Fix the minimal such $j$ and denote it by $j^*$.  There are two cases to consider

Case 1:  $j^*=0$.  By definition of the ordering on towers, since $\mathcal T<\mathcal T'$, we know that $\tp(a_{0}/M'_{0})$ does not $\mu$-split over $N$.  Thus $M'_{0}$ witnesses $\mu$-symmetry.

Case 2: $j^*>0$.  
By the choice of $a_j$ and uniqueness of non-splitting extensions, we know $\tp(a_{0}/M'_{0})=\tp(a_{j^*}/M'_{0})$.  Thus, there exists $f\in\Aut_{M_0}(\C)$ with $f(a_{j^*})=a_{0}$.  Since $M_1$ is universal over $M_0$, we can also require that our choice of $f$ has the property that $f\restriction M:M \rightarrow_{M_0} M_{1}$.
Because $\tp(b/M)$ does not $\mu$-split over $N$, we know 
\begin{equation*}\label{b non-split}
\tp(f(b)/f(M))=\tp(b/f(M)).
\end{equation*}
This implies there exists an automorphism $g$ of $\C$ fixing $f(M)$ so that $g(f(b))=b$.  

We claim that $M^b:=g(f(M'_{j^*}))$ is as required.  First notice that $b\in M^b$ since $f(b)\in f(M'_{j^*})$ and $g(f(b))=b$.  Next we need to check that $\tp(a_{0}/M^b)$ does not $\mu$-split over $N$.  By the definition of towers, 
\begin{equation*}\label{non-split j^*} \tp(a_{j^*}/M'_{j^*})\text{ does not }\mu\text{-split over }N_{j^*}(=N).  \end{equation*}
By invariance and by our choice of $f$ and $g$ fixing $N$ with $g(f(M'_{j^*}))=M^b$, we can conclude that  
\begin{equation*}
\tp(g(f(a_{j^*}))/M^b)\text{ does not }\mu\text{-split over }N.
\end{equation*} 
By our choice of $f$ taking $a_{j^*}$ to $a_{0}$, we get 
\begin{equation}\label{non-split gf} \tp(g(a_{0})/M^b)\text{ does not }\mu\text{-split over }N.  \end{equation}
Because $g$ fixes $f(M)$ and $a_{0}=f(a_{j^*})\in f(M)$, $(\ref{non-split gf})$ implies that $\tp(a_{0}/M^b)$ does not $\mu$-split over $N$ as required.

\end{proof}

Combining Theorem \ref{converse} with  Theorem 2 of \cite{Va-errata}, 
we  conclude

\begin{corollary}\label{categoricity symmetry down one}
Under Hypothesis \ref{hypothesis symmetry section}, categoricity in $\mu^+$ implies $\mu$-symmetry. 
\end{corollary}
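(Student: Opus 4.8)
The plan is to derive $\mu$-symmetry as a two-step composition of results already in hand. First I would apply the case $n=1$ of Theorem~\ref{reduced are continuous}, which is precisely Theorem 2 of \cite{Va-errata}: under Hypothesis~\ref{hypothesis symmetry section}, categoricity in $\lambda=\mu^+$ implies that every reduced tower in $\K^*_{\mu,\alpha}$ is continuous for every $\alpha<\mu^+$. Since the standing hypotheses of Corollary~\ref{categoricity symmetry down one} are exactly Hypothesis~\ref{hypothesis symmetry section} together with $\mu^+$-categoricity, this first step applies verbatim and yields continuity of all reduced towers of length below $\mu^+$.

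Second, I would feed this into Theorem~\ref{converse}, whose hypothesis is that Hypothesis~\ref{hypothesis symmetry section} holds and that every reduced tower in $\K^*_{\mu,\alpha}$ is continuous, and whose conclusion is that $\K$ has symmetry for non-$\mu$-splitting, i.e.\ $\mu$-symmetry. It is worth observing that the proof of Theorem~\ref{converse} only ever builds towers of length $\delta+1$, where $\delta<\mu^+$ is a limit ordinal witnessing that some amalgamation base $M$ is a limit model over $M_0$; as $\mu^+$ is a cardinal we have $\delta+1<\mu^+$, so the continuity supplied by the first step covers exactly the towers that arise. Chaining the two implications gives $\mu$-symmetry, which is the assertion of the corollary.

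The one place I would expect to spend any effort — and the only genuine obstacle — is reconciling the hypothesis lists of the two quoted results. Theorem 2 of \cite{Va-errata} was originally proved in the \cite{ShVi} framework under the stronger assumption that the union of an increasing chain of limit models is itself a limit model, whereas Hypothesis~\ref{hypothesis symmetry section}(4) posits only the weaker Assumption~\ref{new assumption}, that such a union is an amalgamation base. So I would want to confirm that throughout the argument of \cite{Va-errata} unions of chains of limit models are used only in their capacity as amalgamation bases — which is indeed the role they play in the direct-limit construction of Fact~\ref{direct limit lemma} and in the density, monotonicity, and union results for reduced towers (Facts~\ref{density of reduced}, \ref{monotonicity of towers}, and \ref{union of reduced is reduced}) — so that Theorem 2 of \cite{Va-errata}, and hence the $n=1$ case of Theorem~\ref{reduced are continuous}, remains valid with only the amalgamation assumed in Hypothesis~\ref{hypothesis symmetry section}. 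Once that bookkeeping is settled, no further model theory is required: the corollary follows by composing Theorem~\ref{reduced are continuous} (for $n=1$) with Theorem~\ref{converse}.
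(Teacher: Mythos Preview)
Your proposal is correct and follows essentially the same route as the paper: invoke Theorem~2 of \cite{Va-errata} (the $n=1$ case of Theorem~\ref{reduced are continuous}) to get continuity of reduced towers from $\mu^+$-categoricity, then apply Theorem~\ref{converse} to extract $\mu$-symmetry. The only cosmetic difference is in how the hypothesis-matching is handled: the paper dispatches the bookkeeping by observing that Assumption~\ref{new assumption} forces every tower to be nice, so that the statement of Theorem~2 of \cite{Va-errata} (which is phrased for reduced \emph{nice} towers) applies directly, whereas you frame it as checking that the proof in \cite{Va-errata} only uses unions of chains of limit models in their capacity as amalgamation bases---but these amount to the same verification.
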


\begin{proof}
Assumption \ref{new assumption}  implies that all towers are nice.  Theorem 2 of of \cite{Va-errata} states that all reduced nice towers of cardinality $\mu$ are continuous provided that the class is categorical in $\mu^+$.  Then Theorem \ref{converse} gives us $\mu$-symmetry.
\end{proof}

%%%%%%%%%%%%%%%%%%%%%%%%%%%%%%%%%%%%%%%%%%%%%%%%%%%%%%%
%\section{Transferring symmetry}

Now that we have symmetry in $\lambda$ from categoricity in $\lambda^+$ we can adapt the proof of Corollary 18 of \cite{Va-union} to transfer symmetry from $\lambda$ down to $\mu$ where $\mu^{+n}=\lambda$ for some $1<n<\omega$ in this context in which the full amalgamation property is not assumed.  To transfer symmetry even further down past a limit cardinal we will need to adapt the proof of the Theorem 1.1 of \cite{VV-transfer} which appears in an upcoming paper.

\begin{theorem}\label{categoricity implies symmetry}
Under Hypothesis \ref{hypothesis symmetry section}, categoricity in $\mu^{+n}$ for some $0<n<\omega$ implies $\mu$-symmetry for all $\mu\geq\LS(\K)$.
\end{theorem}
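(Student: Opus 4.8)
The plan is to induct on $n$. The base case $n=1$ is exactly Corollary~\ref{categoricity symmetry down one}: under Hypothesis~\ref{hypothesis symmetry section}, categoricity in $\mu^+$ gives $\mu$-symmetry, for every $\mu\geq\LS(\K)$. For the inductive step, assume the statement for $n-1$ and suppose $\K$ is categorical in $\lambda=\mu^{+n}$ with $n\geq 2$. Since $\lambda=(\mu^+)^{+(n-1)}$, applying the induction hypothesis at the cardinal $\mu^+$ (which is $\geq\LS(\K)$) yields that $\K$ has $\mu^+$-symmetry. It then remains to carry out a single downward transfer, from $\mu^+$-symmetry to $\mu$-symmetry. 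Note that $\mu$-superstability and $\mu^+$-superstability are both available here: they are part of Hypothesis~\ref{hypothesis symmetry section}, since $\mu^+<\lambda$ when $n\geq 2$. (Equivalently one may first extract $\mu^{+(n-1)}$-symmetry from Corollary~\ref{categoricity symmetry down one} and then iterate the one-step transfer $n-1$ times; since $\lambda=\mu^{+n}$ with $n$ finite, no limit cardinal is ever crossed, which is why the harder adaptation of \cite{VV-transfer} is not needed here.)

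For the one-step transfer I would adapt the proof of Corollary 18 of \cite{Va-union}, which shows in the amalgamation setting that $\chi^+$-symmetry together with $\chi$- and $\chi^+$-superstability implies $\chi$-symmetry; here $\chi=\mu$. The adaptation follows the same template used in Section~\ref{sec:symmetry} for the within-$\mu$ arguments. All constructions are performed inside the fixed saturated model $\C$, taken to be a $(\mu,\mu^+)$-limit model and used as a surrogate monster model. Wherever \cite{Va-union} amalgamates over a model, one restricts to amalgamation bases --- limit models and, more generally, saturated models of cardinality $\mu^+$ that are dense with $\mu$-amalgamation bases, which by Corollaries~\ref{sat is limit} and~\ref{sat is ab} are $(\mu,\mu^+)$-limit models and hence amalgamation bases, and which by Lemma~\ref{sat universal lemma} are universal over their $\mu$-sized amalgamation-base submodels. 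Wherever \cite{Va-union} forms a direct limit of a directed system arising from tower constructions, one uses Fact~\ref{direct limit lemma} to stay inside $\C$; the hypothesis that the relevant unions of chains of limit models are amalgamation bases (Hypothesis~\ref{hypothesis symmetry section}(4), i.e.\ Assumption~\ref{new assumption}) guarantees the towers stay amalgamable. Finally, the translation between $\chi$-symmetry and continuity of reduced towers in $\K^*_{\chi,\alpha}$ on which \cite{Va-union} relies is supplied, in this context, by Theorems~\ref{symmetry reduced} and~\ref{converse}, applied at $\chi=\mu$ and at $\chi=\mu^+$.

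The main obstacle is precisely this adaptation: verifying, step by step, that every appeal to the full amalgamation property in \cite{Va-union} can be discharged either by an appeal to ``limit models (resp.\ saturated models dense with $\mu$-amalgamation bases) are amalgamation bases'' or by restricting the relevant tower construction to amalgamable towers, and that no step forces the construction outside $\C$. The two delicate points are (i) ensuring that the $\mu^+$-saturated models built during the argument remain dense with $\mu$-amalgamation bases, so that Corollary~\ref{sat is ab} continues to apply --- this is exactly what Lemmas~\ref{limits are dense with ab} and~\ref{union is dense} are for --- and (ii) ensuring that the unions of increasing chains of limit models that appear inside the tower constructions are amalgamation bases, which is Assumption~\ref{new assumption} as built into Hypothesis~\ref{hypothesis symmetry section}. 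Everything else --- invariance, monotonicity, extension, and uniqueness of non-$\mu$-splitting, the superstability-driven ``eventual non-splitting'' over chains, and the back-and-forth isomorphism constructions --- should transfer essentially verbatim, since all of these are already available at each of the cardinalities $\mu$ and $\mu^+$ under Hypothesis~\ref{hypothesis symmetry section}.
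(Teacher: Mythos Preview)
Your proposal is correct and follows essentially the same approach as the paper: both reduce, by induction on $n$ via Corollary~\ref{categoricity symmetry down one}, to the one-step transfer ``$\mu^+$-symmetry implies $\mu$-symmetry,'' and both obtain that transfer by adapting \cite{Va-union} to the limited-amalgamation setting using Theorems~\ref{symmetry reduced} and~\ref{converse}, Assumption~\ref{new assumption}, and Proposition~\ref{unique saturated proposition}. The paper simply carries out the adaptation explicitly---building a $<$-increasing chain $\langle\T^i\mid i<\mu^+\rangle$ of reduced continuous towers in $\K^*_{\mu,\delta}$ above a minimal discontinuous reduced tower, passing to the resulting tower in $\K^*_{\mu^+,\alpha}$ via Proposition~\ref{unique saturated proposition}, and then using $\mu^+$-symmetry and a short non-$\mu$-splitting claim to contradict reducedness---whereas you describe the template; the density-of-amalgamation-bases lemmas you flag as delicate are in fact not needed here, since the $\mu^+$-models that arise are $(\mu,\mu^+)$-limits and Proposition~\ref{unique saturated proposition} applies directly.
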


\begin{proof}
By Corollary \ref{categoricity symmetry down one} it is enough to show that $\mu^+$-symmetry implies $\mu$-symmetry.
This is an adaptation of the proof of Theorem 2 of \cite{Va-union}.  
Suppose $\K$ does not have symmetry for $\mu$-non-splitting.  By Theorem \ref{converse} and Hypothesis \ref{hypothesis symmetry section}, $\K$ has a reduced discontinuous tower.  Let $\alpha$ be the minimal ordinal such that $\K$ has a reduced discontinuous tower of length $\alpha$.  By Fact \ref{monotonicity of towers}, we may assume that $\alpha=\delta+1$ for some limit ordinal $\delta$.  Fix $\T=(\bar M,\bar a,\bar N)\in\K^*_{\mu,\alpha}$ a reduced discontinuous tower  with $b\in M_\delta\backslash \Union_{\beta<\delta}M_\beta$.  By Fact \ref{density of reduced}, Fact \ref{union of reduced is reduced}, and the minimality of $\alpha$, we can build an increasing and continuous chain of reduced, continuous towers $\langle\T^i\mid i<\mu^+\rangle$ extending $\T\restriction\delta$ in $\C$.

For each $\beta<\delta$, set $M^{\mu^+}_\beta:=\Union_{i<\mu^+}M^i_\beta$.  
Notice that for each $\beta<\delta$ 
\begin{equation}\label{non-split}
\tp(a_\beta/M^{\mu^+}_\beta)\text{ does not }\mu\text{-split over }N_\beta.
\end{equation}
 If $\tp(a_\beta/M^{\mu^+}_\beta)$ did $\mu$-split over $N_\beta$, it would be witnessed by models inside some $M^i_\beta$, contradicting the fact that $\tp(a_\beta/M^{i}_\beta)$ does not $\mu$-split over $N_\beta$.

We will construct a tower in $\K^*_{\mu^+,\delta}$ from $\bar M^{\mu^+}$.    Notice that by construction, each $M^{\mu^+}_\beta$ is a $(\mu,\mu^+)$-limit model. By Hypothesis \ref{hypothesis symmetry section}.\ref{limits exist hyp}, 
there is a $(\mu^+,\mu^+)$-limit model; so we can apply Proposition \ref{unique saturated proposition} to notice that each $M^{\mu^+}_\beta$ can be represented as a $(\mu^+,\mu^+)$-limit model.  Fix $\langle \grave M^i_\beta\mid i<\mu^+\rangle$ witnessing that $M^{\mu^+}_\beta$ is a $(\mu^+,\mu^+)$-limit model.  Without loss of generality we can assume that $N_\beta\prec_{\K}\grave M^0_\beta$.  
By $\mu^+$-superstability we know that for each $\beta<\delta$ there is $i(\beta)<\mu^+$ so that $\tp(a_\beta/M^{\mu^+}_\beta)$ does not $\mu^+$-split over $\grave M^{i(\beta)}_\beta$.  Set $N^{\mu^+}_\beta:=\grave M^{i(\beta)}_\beta$.  Notice that $(\bar M^{\mu^+},\bar a,\bar N^{\mu^+})$ is a tower in $\K^*_{\mu^+,\delta}$ that lies in $\C$.  Extend $(\bar M^{\mu^+},\bar a,\bar N^{\mu^+})$ to a tower $\T^{\mu^+}\in\K^*_{\mu^+,\alpha}$ by appending to $\bar M^{\mu^+}$ a $\mu^+$-limit model universal over $M_\delta$ which contains $\Union_{\beta<\delta}M^{\mu^+}_\beta$.  This is possible since all of these models lie in $\C$.
Since $\T^{\mu^+}$ is discontinuous, by Theorem \ref{symmetry reduced} and our $\mu^+$-symmetry assumption, we know that it is not reduced.

However, by Hypothesis \ref{hypothesis symmetry section}, our $\mu^+$-symmetry assumption, Theorem \ref{symmetry reduced} and Fact \ref{density of reduced} imply that there exists a reduced, continuous tower $\T^*\in\K^*_{\mu^+,\alpha}$ extending $\T^{\mu^+}$ in $\C$.  By multiple applications of Fact \ref{density of reduced}, we may assume that in $\T^*$ each $M^*_\beta$ is a $(\mu^+,\mu^+)$-limit over $M^{\mu^+}_\beta$.  See Fig. \ref{fig:tower}.

\begin{figure}[h]
\begin{tikzpicture}[rounded corners=5mm,scale =2.9,inner sep=.5mm]
\draw (0,1.5) rectangle (.75,.5);
\draw (0,1.5) rectangle (1.75,1);
\draw (.25,.65) node {$N_0$};
\draw (1.25,1.1) node {$N_\beta$};
\draw[rounded corners=5mm, color=gray]  (0, 1.5) --(0,.75)-- (1,-1) -- (1.5,-1)  -- (1.75,1)--(1.75,1.5)--  cycle;
\draw[color=gray] (1.85,.7) node {$N^{\mu^+}_\beta$};
\draw (0,0) rectangle (4,1.5);
\draw (.85,.25) node {$M_0$};
\draw(1.4,.25) node {$M_1$};
\draw (1.8,.25) node {$\dots M_\beta$};
\draw (2.35,.25) node {$M_{\beta+1}$};
\draw (3.15,.2) node {$\dots\displaystyle{\Union_{\gamma<\delta}M_\gamma}$};
\draw (3.85, .25) node {$M_\delta$};
\draw (-.5,.25) node {$(\bar M,\bar a,\bar N)$};
\draw (0,1.5) rectangle (3.5, -.4);
% mu+ reduced continuous
\draw[rounded corners=5mm, color=gray]  (0, 1.5) -- (0,-2) -- (3.5,-2)  -- (4,0)--(4,1.5) --  cycle;
\draw[rounded corners=5mm, color=gray]  (0, 1.5) -- (0,-1.35) -- (3.5,-1.35)  -- (4,0)--(4,1.5) --  cycle;
\draw (.85,-.15) node {$M^{i}_0$};
\draw (1.8,-.15) node {$\dots M^{i}_\beta$};
\draw (2.35,-.15) node {$M^{i}_{\beta+1}$};
\draw(1.4,-.15) node {$M^{i}_1$};
\draw (3.15,-.2) node {$\dots\displaystyle{\Union_{\gamma<\delta}M^{i}_\gamma}$};
\draw (-.5,-.15) node {$\T^i$};
\draw (.85,-.6) node {$\vdots$};
\draw (1.75,-.6) node {$\vdots$};
\draw (2.35,-.6) node {$\vdots$};
\draw (3.2,-.6) node {$\vdots$};
\draw (1.35,-.6) node {$\vdots$};
\draw[color=gray] (0,1.5) rectangle (3.5, -1.35);
\draw[color=gray] (0,1.5) rectangle (1,-2);
\draw[color=gray](0,1.5) rectangle (1.5, -2);
\draw[color=gray] (0,1.5) rectangle (2.5, -2);
\draw[color=gray] (0,1.5) rectangle (2,-2);
\draw[color=gray] (.8,-1.15) node {$M^{\mu^+}_0$};
\draw[color=gray] (1.8,-1.15) node {$ M^{\mu^+}_\beta$};
\draw[color=gray] (2.3,-1.15) node {$M^{\mu^+}_{\beta+1}$};
\draw[color=gray](1.35,-1.15) node {$M^{\mu^+}_1$};
\draw[color=gray] (3.1,-1.15) node {$\dots\displaystyle{\Union_{\gamma<\delta}M^{\mu^+}_\gamma}$};
\draw[color=gray] (-.5,-1.15) node {$\T^{\mu^+}$};
\draw[color=gray] (-.5,-1.75) node {$\T^{*}$};
\draw[color=gray] (.8,-1.75) node {$M^{*}_0$};
\draw[color=gray](1.4,-1.75) node {$M^*_1$};
\draw[color=gray] (1.8,-1.75) node {$ M^{*}_\beta$};
\draw[color=gray] (2.3,-1.75) node {$M^{*}_{\beta+1}$};
\node at (3.75,.75)[circle, fill, draw, label=90:$b$] {};
\node at (2.25,.65)[circle, fill, draw, label=290:$a_\beta$] {};
\node at (1.1,.65)[circle, fill, draw, label=290:$a_1$] {};
\draw[color=gray] (3.65, -.6) node {$M^{\mu^+}_\delta$};
\draw[color=gray] (3.05,-1.8) node {$\dots\displaystyle{\Union_{\beta<\delta}M^*_\beta}=M^*_\delta$};
\end{tikzpicture}
\caption{The  towers in the proof of Theorem \ref{categoricity implies symmetry}.  The towers composed of models of cardinality $\mu$ are  black and the towers composed of models of cardinality $\mu^+$ are gray.} \label{fig:tower}
\end{figure}

\begin{claim}\label{star non-split}
For every $\beta<\alpha$, 
$\tp(a_\beta/M^*_\beta)$ does not $\mu$-split over $N_\beta$.
\end{claim}
\begin{proof}
Since $M^*_\beta$ and $M^{\mu^+}_\beta$ are both $(\mu^+,\mu^+)$-limit models over $N^{\mu^+}_\beta$, there exists $f:M^*_\beta\cong_{N^{\mu^+}_\beta}M^{\mu^+}_\beta$.  Since $\T^*$ is a tower extending $\T^{\mu^+}$, we know that $\tp(a_\beta/M^*_\beta)$ does not $\mu^+$-split over $N^{\mu^+}_\beta$.  Therefore by the definition of non-splitting, it must be the case that $\tp(f(a_\beta)/M^{\mu^+}_\beta)=\tp(a_\beta/M^{\mu^+}_\beta)$.  From this equality of types we can fix $g\in\Aut_{M^{\mu^+}_\beta}(\C)$ with $g(f(a_\beta))=a_\beta$.
An application of $(g\circ f)^{-1}$ to $(\ref{non-split})$ yields the statement of the claim.

\end{proof}

Since $\T^*$ is continuous and extends $\T^{\mu^+}$ which contains $b$, there is $\beta<\delta$ such that $b\in M^*_\beta$.  Fix such a $\beta$.

We now will define a tower $\T^b\in\K^*_{\mu,\alpha}$ extending $\T$.  For $\gamma<\beta$, take $M^b_\gamma:=M_\gamma$.  For $\gamma=\beta$, let $M^b_\gamma$ be a $(\mu,\mu)$-limit model over $M_\gamma$ inside $M^*_\gamma$ so that $b\in M^b_\gamma$.  For $\gamma>\beta$, take $M^b_\gamma$ to be a $(\mu,\mu)$-limit model over $M_\gamma$ so that $\Union_{\xi<\gamma}M^b_{\xi}\prec_{\K}M^b_\gamma$.  Notice that by Claim \ref{star non-split} and monotonicity of non-splitting, the tower $\T^b$ defined as $(\bar M^b,\bar a,\bar N)$ is a tower extending $\T$ with $b\in (M^b_\beta\backslash M_\beta)\bigcap M_\alpha$.  This contradicts our assumption that $\T$ was reduced.

\end{proof}

%%%%%%%%%%%%%%%%%%%%%%%%%%%%%%%%%%%%%%%%%%%%%%%%%%%%%%%%%%%%%%%%%%%%
\section{Proof of Theorem \ref{necessary theorem}}\label{sec:main theorem}

First notice that the assumptions of Theorem \ref{necessary theorem} imply the following properties for every $\kappa$ with $\LS(\K)\leq\kappa<\lambda$:
\begin{enumerate}
\item $\kappa$-superstability \cite[Facts 1.4.7 and 1.48]{Va}. 
\item Limit models of cardinality $\kappa$ are amalgamation bases \cite[Fact 1.3.10]{ShVi}.
\item Density of amalgamation bases of cardinality $\kappa$ \cite[Theorem 1.2.4]{ShVi}.
\item For every amalgamation base $M$ of cardinality $\kappa$ there exists $M'\in\K_\mu$ a limit model over $M$ \cite[Fact 1.3.10]{ShVi}.
\end{enumerate}

\begin{proof}[Proof of $\ref{assumption item}\Rightarrow\ref{limit item}$ of Theorem \ref{necessary theorem}]
This  is the content of \cite{Va} along with Theorem \ref{reduced are continuous}.
\end{proof}

\begin{proof}[Proof of $\ref{limit item}\Rightarrow\ref{limit not over item}$]
Suppose that $M$ is a $(\mu,\theta)$-limit model over $M_0$ and $M'$ is a $(\mu,\theta')$-limit model over $M'_0$ (perhaps of no relation to $M_0$).  By categoricity in $\lambda$ we may assume without loss of generality that there is $N\in\K_\lambda$ so that $M,M'\prec_{\K}N$.  By the Downward L\"{o}wenheim Skolem axiom of AECs, we can find $M^*$ an extension of $M$ of cardinality $\mu$ containing $M'$.  By the coherence axiom, we may assume that $M'\prec_{\K}M^*$ as well.  By the existence of limit models, we can assume that $M^*$ is a $(\mu,\theta')$-limit model over $M$.  Notice that $M^*$ is also a $(\mu,\theta')$-limit model over $M_0$.  By \ref{limit item}, $M^*$ and $M$ are isomorphic over $M_0$.  

Furthermore, notice that $M^*$ is a $(\mu,\theta')$- limit model over $M'$ as well.   Then we also know that $M^*$ is a $(\mu,\theta')$-limit model over $M'_0$.  By a back and forth construction $M^*$ and $M'$ are isomorphic over $M'_0$.  Thus, combining this information with the previous paragraph, we conclude that $M'$ and $M$ are isomorphic.

\end{proof}

\begin{proof}[Proof of $\ref{limit not over item}\Rightarrow\ref{union item}$ of Theorem \ref{necessary theorem}]
This argument is an adaptation of the proof of Theorem 20 of \cite{Va-union}.
Fix $M=\Union_{i<\theta}M_i$ where $\langle M_i\in\K_{\kappa^+}\mid i<\theta\rangle$ is an increasing and continuous sequence of   saturated models dense with amalgamation bases.  Fix $N\prec_{\K}M$ an amalgamation base of cardinality $\kappa$.
Let $p:=\tp(a/N)$.
Suppose for the sake of contradiction that $p$ is not realized in $M$.

We can use the assumption that each $M_i$ is dense with amalgamation bases and the Downward L\"{o}wenheim-Skolem axiom to find 
$\langle N_i\in\K_\kappa\mid i<\theta\rangle $  an increasing and continuous sequence of amalgamation bases so that $N\bigcap M_i\subseteq N_i\prec_{\K}M_i$ for each $i<\theta$.
Because each $M_{i+1}$ is $\kappa^+$-saturated and dense with amalgamation bases, by Lemma \ref{sat universal lemma} we may further select this sequence so  that $N_{i+1}$ is universal over $N_i$.
Notice that $\Union_{i<\theta}N_i$ is a $(\kappa,\theta)$-limit model and hence an amalgamation base.  Because we are assuming that $a\notin M$, we know that $a\notin \Union_{i<\theta}N_i$.  This allows us to assume without loss of generality that $N$ is the $(\kappa,\theta)$-limit model $\Union_{i<\theta}N_i$ and $p:=\tp(a/N)$ is a Galois-type omitted in $M$. 

Then by $\kappa$-superstability, we may assume without loss of generality that $p$ does not $\kappa$-split over $N_0$, by possibly renumbering the sequences $\bar N$ and $\bar M$.  

For each $i<\theta$, because $M_i$ is $\kappa^+$-saturated and dense with amalgamation bases, by Corollary \ref{sat is limit} and Proposition \ref{unique saturated proposition},
$M_i$ is isomorphic to both a $(\kappa,\kappa^+)$-limit model and a $(\kappa^+,\kappa^+)$-limit model.  So, inside each $M_i$ we can find a $(\kappa^+,\kappa^+)$-limit model witnessed by a sequence that we will denote by $\langle\grave M_i^\alpha\in\K_{\kappa^+}\mid \alpha<\kappa^+\rangle$,  and  we may arrange the enumeration so that  $N_i\prec_{\K}\grave M^0_i$.

We will build a directed system  of models $\langle M^*_i\mid i<\theta\rangle$ with mappings $\langle f_{i,j}\mid i\leq j<\theta\rangle$ so that the following conditions are satisfied:
\begin{enumerate}
\item $M^*_i\in\K_{\kappa^+}$.
\item $M^*_i\preceq_{\K}\Union_{\alpha<\kappa^+}\grave M_i^\alpha\preceq_{\K}M_i$. 

\item for $i\leq j<\theta$, $f_{i,j}:M^*_i\rightarrow M^*_j$. 
%\item for $i<\theta$, $f_{i,i+1}[N]\prec_{\K} M^*_{i+1}$. 
\item\label{identity condition} for $i\leq j<\theta$, $f_{i,j}\restriction N_i=id_{N_i}$.
\item\label{univ condition direct limit} $M^*_{i+1}$ is universal over $f_{i,i+1}(M^*_i)$.

\end{enumerate}

 Refer to Figure \ref{fig:T*}.
 \begin{figure}[tb]
\begin{tikzpicture}[rounded corners=5mm,scale =2.5,inner sep=.5mm]
\draw (0,0) rectangle (4.5,-.5);
\draw (0,0) rectangle (4.5,-2);
\draw (0,0) rectangle (1,-2);
\draw (0,0) rectangle (2,-2);
\draw (0,0) rectangle (3,-2);
\draw (.85,-.4) node {$N_0$};
\draw (1.75,-.4) node {$\dots N_j$};
\draw (2.85,-.4) node {$N_{j+1}$};
\draw (3.85,-.4) node {$\dots \Union_{i<\theta}N_i=N$};
\draw (.8,-1.9) node {$M_0$};
\draw (1.7,-1.9) node {$\dots M_j$};
\draw (2.75,-1.9) node {$M_{j+1}$};
\draw (3.85,-1.9) node {$\dots \Union_{i<\theta}M_i=M$};
\coordinate (m01-in) at (0,-.9);
\coordinate (m01-out) at (1,-.9);
\draw    (m01-in) to[out=-20,in=200] coordinate[pos=0.7](A1)  (m01-out);
\draw (.6,-.9) node {$M^*_{0}$};
\coordinate (m10-in) at (0,-.5);
\coordinate (m10-out) at (2,-.5);
\draw    (m10-in) to[out=-20,in=240] coordinate[pos=0.7](Ai)  coordinate[pos=.8](ai)(m10-out);
\draw (1.4,-.7) node {$M^*_{j}$};
\coordinate (m1t-in) at (0,-.4);
\coordinate (m1t-out) at (3,-0.5);
\draw    (m1t-in) to[out=-80,in=240] coordinate[pos=0.8](Ai1) (m1t-out);
\draw (2.6,-.7) node {$\grave M^1_{j+1}$};
\draw [->, shorten >=3pt] (A1) to [bend right=65] node[pos=0.7,below] {$f_{0,j}$}(Ai);
\draw [->, shorten >=3pt] (ai) to [bend right=25] node[pos=0.7,above] {$f_{j,j+1}$}(Ai1);
\draw    (m1t-in) to (.2, -1.5) to (2.2,-1.8) to coordinate[pos=0.8](Ai1) (m1t-out);
\draw (1.55,-1.55) node {$\grave M^2_{j+1}=M^*_{j+1}$};
\end{tikzpicture}
\caption{The directed system in the proof of Theorem \ref{necessary theorem}.} \label{fig:T*}
\end{figure}

The construction is possible.  Take $M^*_0$ to be $\grave M_0^1$ and $f_{0,0}=\id$.  At limit stages take $M^{**}_i$ and $\langle f^{**}_{k,i}\mid k<i\rangle$ to be a direct limit as in Fact \ref{direct limit lemma} which is possible because each $N_i$ is an amalgamation base.  We do not immediately get that $M^{**}_i\preceq_{\K}M_i$; we just know we can choose $M^{**}_i$ to contain $N_i$ by the continuity of $\bar N$ and condition \ref{identity condition} of the construction.  We also know by condition \ref{univ condition direct limit} that $M^{**}_i$ is a $(\kappa^+,i)$-limit model  witnessed by $\langle f_{k,i}(M^*_k)\mid k<i\rangle$.
By the uniqueness  of limit models of cardinality $\kappa^+$ and Proposition  \ref{unique saturated proposition}, $M^{**}_i$ is a $(\kappa^+,\kappa^+)$-limit model.  Since $N_i$ has cardinality $\kappa$, being able to write $M^{**}_i$ as a $(\kappa^+,\kappa^+)$-limit model tells us that $M^{**}_i$ is $\kappa^+$-universal over $N_i$.  Recall that  $\Union_{\alpha<\kappa^+}\grave M^\alpha_i$ is also a $(\kappa^+,\kappa^+)$-limit model containing $N_i$.  Therefore, by a back-and-forth argument, we can find an isomorphism $g$ from $M^{**}_i$ to $\Union_{\alpha<\kappa^+}\grave M^\alpha_i$ fixing $N_i$.  Now take $M^*_i:=g(M^{**}_i)=\Union_{\alpha<\kappa^+}\grave M^\alpha_i$, $f_{k,i}:=g\circ f^{**}_{k,i}$ for $k<i$, and $f_{i,i}=\id$.  

For the successor stage of the construction, assume that $M^*_j$ and $\langle f_{k,j}\mid k\leq j\rangle$ have been defined.  Since $M^*_j$ is a model of cardinality $\kappa^+$ containing $N_j$ and because $\grave M^{1}_{j+1}$ is $\kappa^+$-universal over $N_{j+1}$ we can find a embedding $g:M^*_j\rightarrow \grave M^1_{j+1}$ with $g\restriction N_j=\id_{N_j}$.  Take $M^*_{j+1}:=\grave M^2_{j+1}$,  set $f_{k,j+1}:=g\circ f_{k,j}$ for all $k\leq j$, and define $f_{j+1,j+1}:=\id$.  This completes the construction.

Take $M^*$ in $\C$ with mappings $\langle f_{i,\theta}\mid i<\theta\rangle$  to be the direct limit of the system as in Fact \ref{direct limit lemma}.  While $M^*$ may not be inside $M$, we can arrange that $f_{i,\theta}\restriction N_i=\id_{N_i}$ and that $N\prec_{\K}M^*$.  Notice that by condition \ref{univ condition direct limit} of the construction, $M^*$ is a $(\kappa^+,\theta)$-limit model.   By the uniqueness of $\kappa^+$-limit models, we know that $M^*$ is saturated.  

For each $i<\theta$, let $f^*_{i,\theta}\in\Aut(\C)$ extend $f_{i,\theta}$ so that $f^*_{i,\theta}(N)\preceq_{\K}M^*$.  This is possible since we know that $M^*$ is $\kappa^+$-universal over $f_{i,\theta}(M_i)$ by condition \ref{univ condition direct limit} of the construction.  
%Since $M^*$ is saturated and contains $N_i$ and  $N_i$ has cardinality $\mu$, we know that $M^*$ is $\mu$-universal over $N_i$, and thus $N$ can be embedded into $M^*$ over $N_i$.
Let $N^*\prec_{\K}M^*$ be a model of cardinality $\kappa$ extending $N$ and $\Union_{i<\theta} f^*_{i,\theta}(N)$.  By the extension property for non-$\kappa$-splitting, we can find $p^*\in\gaS(N^*)$ extending $p$ so that 
\begin{equation}\label{p*}
p^*\text{ does not }\kappa\text{-split over }N_0.
\end{equation}
  Since $M^*$ is a saturated model of cardinality $\kappa^+$,
we can find $b^*\in M^*$ realizing $p^*$.  By the definition of a direct limit, there exists $0<i<\theta$ and $b\in M^*_i$ so that $f_{i,\theta}(b)=b^*$.  

Because $f_{i,\theta}\restriction N_i=id_{N_i}$, we know that $b\models p\restriction N_i$.  Suppose for sake of contradiction that there is some $j>i$ so that $\tp(b/N_j)\neq p\restriction N_j$.  Then, by the uniqueness of non-splitting extensions, it must be the case that $\tp(b/N_j)$ $\kappa$-splits over $N_0$.  By invariance, 
\begin{equation}\label{non-split equation}
\tp(f_{i,\theta}(b)/f^*_{i,\theta}(N_j)) \;\kappa\text{-splits over }N_0.
\end{equation}  By monotonicity of non-splitting,  the definition of $b$, and choice of $N^*$ containing $f^*_{i,\theta}(N)$, $(\ref{non-split equation})$ implies $\tp(b^*/N^*)$ $\kappa$-splits over $N_0$.  This contradicts $(\ref{p*})$.

Since $b\models p\restriction N_j$ for all $j<\theta$ and $p\restriction N_j$ does not $\kappa$-split over $N_0$, $\kappa$-superstability implies that $\tp(b/N)$ does not $\kappa$-split over $N_0$.  By uniqueness of non-$\kappa$-splitting extensions $\tp(b/N)=p$.  Since $b\in M_i$, we are done.

\end{proof}

\begin{proof}[Proof of $\ref{union item}\Rightarrow\ref{assumption item}$ of Theorem \ref{necessary theorem}]

First notice that by 
Lemma \ref{limits are dense with ab} every limit model is dense with amalgamation bases.  Next we show that  by $\ref{union item}$ every limit model of cardinality $\mu=\kappa^+$ is saturated.  To see this consider $N$ a limit model of cardinality $\kappa^+$ witnessed by $\langle N_i\mid i<\theta\rangle$.  By $\kappa^+$-applications of Fact \ref{limits are ab}, for each $N_i$ we can find $N'_i$  a $(\kappa^+,\kappa^+)$-limit model extending $N_i$.  
By Fact \ref{limits are ab} and Proposition \ref{unique saturated proposition}
each $N'_i$ is a $(\kappa,\kappa^+)$-limit model.  Thus each $N'_i$ is
saturated and dense with $\kappa$-amalgamation bases.  
Because $N_{i+1}$ is universal over $N_i$ there is $f_i:N'_i\rightarrow_{N_i} N_{i+1}$.  Let $N^*_i:=f_i(N'_i)$.  Notice that $\langle N^*_i\mid i<\theta\rangle$ is an increasing sequence of saturated models dense with amalgamation bases and  $N=\Union_{i<\theta}N^*_i$.  Thus by our assumption $\ref{union item}$, $N$ is saturated.  

To prove $\ref{assumption item}$, suppose that $\langle M_i\mid i<\theta\rangle$ is an increasing and continuous chain of limit models each of cardinality $\kappa^+$.  By the previous paragraph we can apply $\ref{union item}$ to the sequence $\langle M_i\mid i<\theta\rangle$ to conclude that $M:=\Union_{i<\theta}M_i$ is saturated.  By Lemma \ref{union is dense}, $M$ is dense with amalgamation bases.  
By Corollary \ref{sat is ab},  $M$ is an amalgamation base as required.

\end{proof}

The question remains:  Are the assumptions of Theorem \ref{necessary theorem} enough on their own to prove that the union of an increasing and continuous chain of limit models is an amalgamation base?  This is answered affirmatively in \cite{BVV}.

\section{Acknowledgements}
The author is grateful to Sebastien Vasey for comments and suggestions on earlier drafts of this paper which greatly improved the clarity and accuracy of the presentation.  The author also thanks the organizers of the special session and satellite conference at the 2015 Joint Mathematics Meeting that led to the creation of this volume and to the referee who gave constructive advice on the exposition.

%% The Appendices part is started with the command \appendix;
%% appendix sections are then done as normal sections
%% \appendix

%% \section{}
%% \label{}

%% References
%%
%% Following citation commands can be used in the body text:
%% Usage of \cite is as follows:
%%   \cite{key}          ==>>  [#]
%%   \cite[chap. 2]{key} ==>>  [#, chap. 2]
%%   \citet{key}         ==>>  Author [#]

%% References with bibTeX database:

\bibliographystyle{model1-num-names}
\bibliography{<your-bib-database>}

%% Authors are advised to submit their bibtex database files. They are
%% requested to list a bibtex style file in the manuscript if they do
%% not want to use model1-num-names.bst.

%% References without bibTeX database:

\end{document}